\documentclass[a4paper,11pt]{amsart}
\usepackage{amsmath, amsfonts, amssymb, amsthm, amscd}
\usepackage{graphicx}
\usepackage{enumerate}
\usepackage{url}
\usepackage{color}
\usepackage[utf8]{inputenc}
\usepackage{tikz}
\usetikzlibrary{patterns}
\usetikzlibrary{arrows} 
\usepackage[a4paper,scale={0.72,0.74},marginratio={1:1},footskip=7mm,headsep=10mm]{geometry}


\setcounter{secnumdepth}{2}

\frenchspacing

\numberwithin{equation}{section}

\newtheorem{theorem}{Theorem}[section]
\newtheorem{lemma}[theorem]{Lemma}
\newtheorem{proposition}[theorem]{Proposition}

\newtheorem{remark}[theorem]{Remark}

\newtheorem{conjecture}[theorem]{Conjecture}


\newcommand{\bbE}{{\ensuremath{\mathbb E}} }

\newcommand{\bbK}{{\ensuremath{\mathbb K}} }

\newcommand{\bbN}{{\ensuremath{\mathbb N}} }

\newcommand{\bbP}{{\ensuremath{\mathbb P}} }

\newcommand{\bbR}{{\ensuremath{\mathbb R}} }

\newcommand{\bbZ}{{\ensuremath{\mathbb Z}} }


\newcommand{\cD}{{\ensuremath{\mathcal D}} }

\newcommand{\cF}{{\ensuremath{\mathcal F}} }
\newcommand{\cG}{{\ensuremath{\mathcal G}} }

\newcommand{\cI}{{\ensuremath{\mathcal I}} }

\newcommand{\cL}{{\ensuremath{\mathcal L}} }

\newcommand{\cP}{{\ensuremath{\mathcal P}} }

\newcommand{\cZ}{{\ensuremath{\mathcal Z}} }


\newcommand{\ga}{\alpha}
\newcommand{\gb}{\beta}
\newcommand{\gga}{\gamma}

\newcommand{\gd}{\delta}
\newcommand{\gD}{\Delta}
\newcommand{\gep}{\varepsilon} 
\newcommand{\gz}{\zeta}
\newcommand{\gt}{\theta}

\newcommand{\gl}{\lambda}

\newcommand{\gs}{\sigma}

\newcommand{\go}{\omega}


\renewcommand{\tilde}{\widetilde}          
\DeclareMathSymbol{\leqslant}{\mathalpha}{AMSa}{"36} 
\DeclareMathSymbol{\geqslant}{\mathalpha}{AMSa}{"3E} 
\DeclareMathSymbol{\eset}{\mathalpha}{AMSb}{"3F}     
\newcommand{\dd}{\text{\rm d}}             


\newcommand{\R}{\mathbb{R}}

\newcommand{\N}{\mathbb{N}}

\newcommand{\PEfont}{\mathrm}
\DeclareMathOperator{\var}{\ensuremath{\PEfont Var}}
\DeclareMathOperator{\cov}{\ensuremath{\PEfont Cov}}
\newcommand{\p}{\ensuremath{\PEfont P}}

\DeclareMathOperator{\sign}{sign}
\DeclareMathOperator{\re}{Re}
\DeclareMathOperator{\im}{Im}

\newcommand\bP{\ensuremath{\mathrm{P}}}
\newcommand\bE{\ensuremath{\mathrm{E}}}

\newcommand{\ind}{{\sf 1}}

\renewcommand{\epsilon}{\varepsilon}

\newcommand{\ef}{\mathrm{eff}}
\newcommand{\hu}{\hat u}
\newcommand{\hU}{\hat U}


\newenvironment{myenumerate}{
\renewcommand{\theenumi}{\arabic{enumi}}
\renewcommand{\labelenumi}{{\rm(\theenumi)}}
\begin{list}{\labelenumi}
{
\setlength{\itemsep}{0.4em}
\setlength{\topsep}{0.5em}
\setlength\leftmargin{2.45em}
\setlength\labelwidth{2.05em}
\setlength{\labelsep}{0.4em}
\usecounter{enumi}
}
}
{\end{list}
}

\renewenvironment{enumerate}{
\begin{myenumerate}}
{\end{myenumerate}}




\newcommand{\htau}{\hat\tau}

\newcommand{\hE}{\hat\bE}
\newcommand{\hP}{{\hat\bP}}
\newcommand{\ha}{\hat\ga}
\newcommand{\hK}{\hat K}

\newcommand{\hmu}{\hat \mu}
\newcommand{\gann}{\gamma_{\rm ann}}

\newcommand{\gap}{{\rm gap}}

\newcommand{\sm}{{\raise0.3ex\hbox{$\scriptstyle \setminus$}}}

\newcommand{\beq}{\begin{equation}}
\newcommand{\eeq}{\end{equation}}
\newcommand{\ba}{\begin{aligned}}
\newcommand{\ea}{\end{aligned}}

\begin{document}

\title[Pinning on a renewal set]{The random pinning model with correlated disorder given by a renewal set}

\author{D.\ Cheliotis}
\address{National and Kapodistrian University of Athens, Department of Mathematics, Panepistimiopolis, 15784 Athens, Greece }
\email{dcheliotis@math.uoa.gr}

\author{Y.\ Chino}
\address{Mathematical Institute, Leiden University, P.O. Box 9512, 2300 RA Leiden, The Netherlands}
\email{y.chino@math.leidenuniv.nl}

\author{J.\ Poisat}
\address{Universit\'e Paris-Dauphine, CNRS, UMR [7534], CEREMADE, PSL Research University, 75016 Paris, France}
\email{poisat@ceremade.dauphine.fr}

\subjclass[2010]{82B44 ; 82B27 ; 82D60 ; 60K05 ; 60K35}
\thanks{This work started when YC and DC visited Leiden University, where JP held a postdoc position. DC and JP were supported by ERC Advanced Grant 267356-VARIS of Frank den Hollander and YC was supported by the department of Mathematics of Hokkaido University. Part of this work was later carried out when DC visited Université Paris-Dauphine and JP visited University of Athens. The authors thank the respective institutions for their hospitality and support. JP acknowledges support from a PEPS grant ``Jeunes chercheurs'' of CNRS}

\begin{abstract}
We investigate the effect of correlated disorder on the localization transition undergone by a renewal sequence with loop exponent $\alpha>0$, when the correlated sequence is given by another independent renewal set with loop exponent $\hat \alpha >0$. Using the renewal structure of the disorder sequence, we compute the annealed critical point and exponent. Then, using a smoothing inequality for the quenched free energy and second moment estimates for the quenched partition function, combined with decoupling inequalities, we prove that in the case $\hat \alpha > 2$ (summable correlations), disorder is irrelevant if $\ga<1/2$ and relevant if $\ga >1/2$, which extends the Harris criterion for independent disorder. The case $\hat \alpha \in (1,2)$ (non-summable correlations) remains largely open, but we are able to prove that disorder is relevant for $\ga > 1/\hat \ga$, a condition that is expected to be non-optimal. Predictions on the criterion for disorder relevance in this case are discussed. Finally, the case $\hat\alpha \in (0,1)$ is somewhat special but treated for completeness: in this case, disorder has no effect on the quenched free energy, but the annealed model exhibits a phase transition.
\end{abstract}
\keywords{Pinning model, localization transition, free energy, correlated disorder, renewal, disorder relevance, Harris criterion, smoothing inequality, second moment}
\date{\today}
\maketitle
\section{Introduction}

\par The goal of this paper is to study the phase transition of the pinning model in presence of a correlated disorder sequence built out of a renewal sequence. We first present the general set-up of pinning models before introducing our specific model. For a review on pinning models, we refer to the three monographs~\cite{Gi2007, Gi2011, dH09} and references therein. In this paper we write $\bbN = \{1, 2, \ldots\}$ and $\bbN_0 = \{0, 1, 2,\ldots\}$.

\subsection{General set-up}

The pinning model provides a general mathematical framework for studying various physical phenomena such as the wetting transition of interfaces, DNA denaturation or (de)localization of a polymer along a defect line. This statistical-mechanical model is formulated in terms of a Markov chain $(S_n)_{n\in\bbN_0}$ which is given a reward/penalty $\omega_n$ (depending on the sign) when it returns to its initial state $0$ at time $n$. 

\par Let us denote by $\tau = (\tau_n)_{n\in\bbN_0}$ the sequence of return times to $0$, whose law is denoted by $\bP$. It is a renewal sequence starting at $\tau_0 = 0$, and we assume that the inter-arrival law satisfies
\begin{equation}\label{eq:defK_regvar}
K(n) := \bP(\tau_1 = n) = L(n) \, n^{- (1 + \ga)},\qquad \ga > 0,\quad n\in\bbN,
\end{equation}
where $L$ is a slowly varying function whose support is aperiodic, that is, $\gcd\{n \geq 1 \colon L(n) > 0\}= 1$. We also assume that the renewal process is recurrent, that is $\bP(\tau_1 < \infty) = \sum_{n\ge 1} K(n) = 1$ (otherwise it is said to be transient). By a slight abuse of notation, we shall use $\tau$ to refer to the set $\{\tau_k\}_{k\in\bbN_0}$ and write $\gd_n = \ind_{\{n\in\tau\}}$. Independently of $\tau$, we introduce a disorder sequence, that is a sequence of real valued random variables $\go = (\go_n)_{n\in \bbN_0}$ whose law is denoted by $\bbP$.

\par The object of interest is the sequence of Gibbs measures, also called polymer measures, defined by:
\begin{equation} \label{eq:defquemeas}
\frac{\dd \bP_{n,\gb,h}}{\dd \bP} = \frac{1}{Z_{n,\gb,h}} \exp \Big\{\sum_{k=1}^n (h + \gb \go_k)\gd_k \Big\} \gd_n, \quad n \in \bbN, \quad \gb \geq 0, \quad h \in \bbR,
\end{equation}
where
\begin{equation} \label{eq:defquepf}
Z_{n,\gb,h} = \bE \left( e^{\sum_{k=1}^n (h + \gb \go_k) \gd_k} \gd_n \right)
\end{equation}
is the quenched partition function, $h$ is called a pinning strength or chemical potential and $\gb$ is the inverse temperature.

The free energy of the model is defined by
\begin{equation}
F(\gb,h) = \lim_{n \to \infty} \frac{1}{n} \log Z_{n,\gb,h} \geq 0,
\end{equation}
where the limit holds $\bbP$-a.s.~and in $L^1(\bbP)$ under rather mild assumptions on $\go$, namely if $\go$ is a stationary and ergodic sequence of integrable random variables. Then the two phases of the model are the localized phase $\cL = \{(\gb, h): F(\gb, h)>0\}$, where the contact fraction $\partial_h F(\gb,h) = \lim_{n \to \infty} (1/n) \bE_{n,\gb,h}(|\tau \cap \{1,\ldots,n\}|)$ is positive, and the delocalized phase $\cD = \{(\gb, h): F(\gb, h)=0\}$, where it is zero.
 The two main features of the transition are the quenched critical point and the critical exponent:
\begin{equation}
h_c(\gb) = \inf\{h \colon F(\gb,h) > 0\} ,\quad \nu_q(\gb) = \lim_{h \searrow h_c(\gb)} \frac{\log F(\gb,h)}{\log (h - h_c(\gb))},
\end{equation}
when the limit exists. The critical curve separates the two phases whereas the critical exponent indicates how smooth the transition is between them.

\subsubsection{Disorder relevance}

One reason for the success of this model is the solvable nature of the homogeneous case, which corresponds to the choice $\gb = 0$ and which is treated in detail in \cite{Gi2007}. For the moment, we recall that $h_c(0) = 0$ and $\nu_{\hom}:= \nu_q(0) = \max(1,1/\ga)$, see Theorem 2.1 in \cite{Gi2007}. 

An important challenge in statistical mechanics is to understand the effect of quenched impurities or inhomogeneities in the interaction on the mechanism of the phase transition. This can be done by comparing the critical features of the quenched model to that of the annealed model, which is defined by
\begin{equation} \label{eq:defannmeas}
\frac{\dd \bP^a_{n,\gb,h}}{\dd \bP} = \frac{1}{Z^a_{n,\gb,h}} \bbE\left( \exp \Big\{ \sum_{k=1}^n (h + \gb \go_k) \gd_k \Big\} \gd_n \right), \quad n \in \bbN, \quad \gb \geq 0,\quad h \in \bbR,
\end{equation}
where
\begin{equation} \label{eq:defannpf}
Z^a_{n,\gb,h} = \bbE\bE\left(e^{\sum_{k=1}^n (h+ \gb \go_k)\gd_k} \gd_n\right) = \bbE (Z_{n, \gb, h})
\end{equation}
is the annealed partition function, and the annealed free energy is also defined by
\begin{equation}
F^a(\gb,h) = \lim_{n \to \infty} \frac{1}{n} \log Z^a_{n,\gb,h},
\end{equation}
when the limit exists. The annealed features are then
\begin{equation}
h^a_c(\gb) = \inf\{h \colon F^a(\gb,h) > 0\}, \quad \nu_a(\gb) = \lim_{h \searrow h_c^a(\gb)} \frac{\log F^a(\gb,h)}{\log (h - h_c^a(\gb))},
\end{equation}
when the limit exists (along the paper we may omit $\beta$ to lighten the notation, when there is no ambiguity). A simple application of Jensen's inequality leads to the following comparison
\begin{equation}
F(\gb,h) \leq F^a(\gb,h) \text{ for all } h\in \R, \gb\ge0, 
\end{equation}
and consequently
\begin{equation}
h_c^a(\gb) \leq h_c(\gb).
\end{equation}
If the annealed and quenched critical points or exponents differ at a given value of $\gb$ then disorder is said to be relevant for this value of $\beta$.

\subsubsection{The Harris criterion}

There has been a lot of studies on this problem in the past few years in the case when disorder is given by a sequence of i.i.d.~random variables with exponential moments (under this assumption the annealed model coincides with the homogeneous model after a suitable shift of $h$). All these works put on a firm mathematical ground the prediction known in the physics literature as the Harris criterion~\cite{H74}, which in this context states that disorder should be irrelevant if $\ga < 1/2$ (at least for small values of $\beta$) and relevant if $\ga > 1/2$. Several approaches have been used: direct estimates such as fractional moment and second moment estimates~\cite{AS06, AZ09, BL15, BCPSZ14, CadH13, DGLT09, GiT2006, GTL09}, martingale theory~\cite{L10}, variational techniques~\cite{CdH13} and more recently chaos expansions of the partition functions~\cite{CSZ16+,CSZ16,CTT16+}. The limiting case $\ga = 1/2$ has been the subject of a lot of controversies and has been fully answered only recently~\cite{BL15}. Finally, the full criterion for relevance (in the sense of critical point shift) reads
\begin{equation}
\forall \gb > 0, h_c(\gb) > h_c^a(\gb) \iff \sum_{n \geq 1} L(n)^{- 2} n^{2 (\ga - 1)} = \infty,
\end{equation}
that corresponds to the intersection of two independent copies of $\tau$ being recurrent.

\subsubsection{Correlated disorder: state of the art} 

The study of pinning models in correlated disorder is more recent, see~\cite{B13, B14, BL12, BP15, P13}. From a mathematical perspective, it is quite natural to try and understand how crucial the assumption of independence is for the basic properties of the polymer and in particular for the validity of the relevance criterion. Also, in several instances, the sequence of inhomogeneities may present more or less strong correlations: let us mention for instance the sequence of nucleotides which play the role of the disorder sequence in DNA denaturation~\cite{JPS06}. The main idea is that the relevance criterion should be modified only if the correlations are strong enough. Note that with correlated disorder, even the annealed model may not be trivial. Mainly two types of correlated disorder have been considered until now: correlated Gaussian disorder~\cite{B13, BP15, P13} and random environments with large attractive regions of sub-exponential decay, also referred to as {\it infinite disorder}~\cite{B14, BL12}.

\subsection{Scope of the paper}

The disorder sequence we consider is based on another renewal sequence $\htau$, independent of $\tau$, starting at the origin and whose law shall be denoted by $\hat\bP$. More specifically, we assume that if the Markov chain visits the origin at time $n$ then it is given a reward equal to one if $n \in \htau$, zero otherwise. We are therefore dealing with the following binary correlated disorder sequence:
\begin{equation}
\go_n = \hat\gd_n := \ind_{\{n \in \htau\}}, \qquad n \in \bbN_0.
\end{equation}
From now on, the inter-arrival laws of $\tau$ and $\htau$ satisfy
\begin{equation} \label{ReturnTimeDistributions}
K(n) := \bP(\tau_1 = n) \sim c_K \, n^{- (1 + \ga)},\quad \hK(n) := \hP(\htau_1 = n) = c_{\hK} \, n^{- (1 + \ha)}, \quad n \in \bbN
\end{equation}
with $\ga, \ha >0$, and
\begin{equation}
\mu := \bE(\tau_1),\qquad \hmu := \hE(\htau_1),
\end{equation}
which may be finite or infinite. Note that these definitions ensure aperiodicity for both renewal processes. In principle, the constants $c_K$ and $c_{\hK}$ may also be replaced by slowly varying functions, which would allow to include the special case $\ga \in \{0,1\}$ in the discussion, but we refrain from doing so for the sake of simplicity. Also, we write an equality in the definition of $\hat K(n)$ to ensure log-convexity. This technical condition is actually only needed for proving Theorem~\ref{thm:smoothing} (see Lemmas~\ref{lem:mon.pro} and \ref{lem:comp.tilt.shift}), which we actually believe to hold when the equality sign is replaced by the equivalent sign in the definition of $\hat K$ in \eqref{ReturnTimeDistributions}.

The definitions of the basic thermodynamical quantities is the same as in the previous section, except that $\bbP$ and $\bbE$ are replaced by $\hat\bP$ and $\hat\bE$. The condition that $n \in \tau$ in the definition of the polymer measures above could be removed, leading to the {\it free} versions. The versions with this condition are called the {\it pinned} versions.
It is a standard fact~\cite[Remark 1.2]{Gi2007} that this minor modification does not have any effect on the limiting free energies as defined in Propositions~\ref{pr:exist.ann.free.energy} and~\ref{pr:exist.que.free.energy}.

A first dichotomy arises:
\begin{itemize}
\item If $\ha < 1$, then the quantity in front of $\beta$ in the Hamiltonian is at most $|\htau \cap \{1,\ldots n\}|$, which is of order $n^{\ha}= o(n)$, and therefore disorder has no effect on the quenched free energy, which reduces to the homogeneous free energy. However the annealed model is non-trivial, so we include this case for completeness.\\

\item If $\ha > 1$, then (i) we may replace $\hP$ by its stationary version, denoted by $\hP_s$, under which the distribution of the increments $(\htau_{n + 1} - \htau_n)_{n \in \bbN_0}$ is the same as in $\hP$, whereas that of $\htau_0$ becomes $\{ \hP(\htau_1 > n) / \hmu\}_{n \in \bbN_0}$, see e.g.~\cite[Chapter V, Corollary 3.6]{A2003}  (Again, this does not affect the free energy, see Propositions~\ref{pr:exist.ann.free.energy} and~\ref{pr:exist.que.free.energy}) ; (ii) the correlation exponent of our environment is $\ha - 1$, since for $n > m$,
\begin{equation} \label{eq:speed_corr}
\begin{aligned}
\cov_{\hP_s}(\hat \gd_m, \hat \gd_n) &= \hat \bE_s(\hat \gd_m \hat \gd_n) - \hat \bE_s(\hat \gd_m) \hat \bE_s(\hat \gd_n)\\
&= \hP_s(m \in \htau) \Big( \hP_s(n \in \htau \mid m \in \htau) -  \hP_s(n \in \htau) \Big)\\
&= \frac{1}{\hmu} \Big( \hP(n - m \in \htau) -  \frac{1}{\hmu} \Big)\\
&\sim c (n - m)^{1 - \ha}, \qquad \text{ as } n - m \to \infty, 
\end{aligned}
\end{equation}
for some positive constant $c$. The latter can be deduced from the Renewal Theorem and the following renewal convergence estimate~\cite[Lemma 4]{Fr82}
\begin{equation} \label{eq:ren.conv.est}
\hP(n \in \htau) - \frac{1}{\hmu} \sim \frac{c_{\hK}}{\ha (\ha - 1) \hmu^2} \frac{1}{n^{\ha - 1}}, \qquad n \to \infty.
\end{equation}
\end{itemize}

\par Although our choice of disorder may seem at first quite specific, it is motivated by the following:
\begin{itemize}
\item By tuning the value of the parameter exponent $\ha$, one finds a whole spectrum of correlation exponents ranging from non-summable correlations to summable correlations, according to whether the sum $\sum_{n \geq 0} \cov_{\hP_s}(\hat \gd_0, \hat \gd_n)$ is infinite or finite. According to~\eqref{eq:speed_corr}, correlations are summable when $\ha > 2$ and non-summable when $\ha < 2$.
\item Our disorder sequence is bounded, therefore the annealed free energy is always finite, in contrast to the case of Gaussian variables with non-summable correlations~\cite{B13}.
\item The probability of observing a long sequence of ones decays exponentially in the length, which rules out the infinite disorder regime discussed in~\cite{B14}.
\item The renewal structure of the disorder sequence makes the study of the annealed model and decoupling inequalities more tractable.
\end{itemize}

\subsection{Summary of our results}

What we prove in this paper is the following: for the case $\ha > 2$ (summable correlations), disorder is irrelevant if $\ga < 1/2$ (both in the sense of critical points and critical exponents, at least for small values of $\beta$) and relevant if $\ga > 1/2$ (in the sense of critical exponents), which extends the Harris criterion for independent disorder. For the case $\ha \in (1,2)$ (non-summable correlations) all we are able to prove is that disorder is relevant when $\ga > 1/\ha$, a condition that we expect to be non-optimal. We discuss a list of predictions for disorder relevance in that case. Finally, in the case $\ha \in (0,1)$ disorder has no effect on the quenched free energy, but the annealed model exhibits a phase transition.

\subsection{Outline}

We present our results in Section 2. Section 2.1.~is dedicated to the annealed model and Section 2.2.~to the quenched model. The proofs for the annealed (respectively quenched) model are in Section 3 (resp.~Section 4). Results on renewal theory and homogeneous pinning are collected in the appendix.

\section{Results}

The intersection set of $\tau$ and $\htau$, which we denote by
\begin{equation} \label{eq:deftildetau}
\tilde\tau = \tau \cap \htau,
\end{equation}
will play a fundamental role in the sequel of the paper. Let us notice that it is itself a renewal starting at $\tilde\tau_0 = 0$. We denote its law by $\tilde\bP$ and write
\begin{equation} \label{eq:deftildedelta}
\tilde\gd_n := \ind_{\{n \in \tau \cap \htau\}} = \ind_{\{n \in \tau\}} \ind_{\{n \in \htau\}} = \gd_n \hat \gd_n, \qquad n \in \bbN_0.
\end{equation}
%
\subsection{Results on the annealed model}

We begin with the existence of the annealed free energy.
\begin{proposition} \label{pr:exist.ann.free.energy}
For all $\gb \geq 0$ and $h \in \bbR$, the annealed free energy
\begin{equation}
F^a(\gb,h) = \lim_{n \to \infty} \frac{1}{n} \log Z^a_{n,\gb,h}
\end{equation}
exists and it is finite and non-negative. The result still holds, without changing the value of the free energy, when $\hmu < \infty$ and $\hP$ is replaced by $\hP_s$.
\end{proposition}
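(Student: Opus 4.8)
The plan is to obtain the existence of the limit from superadditivity of the sequence $(\log Z^a_{n,\gb,h})_{n\ge 1}$, using the renewal structure of both $\tau$ and $\htau$. First I would expand the annealed partition function explicitly. Summing over the positions of $\tau$ in $\{1,\dots,n\}$ and over the positions of $\htau$, and using $\go_k = \hat\gd_k$, one gets
\begin{equation}
Z^a_{n,\gb,h} = \bE\,\hE\Big( e^{\sum_{k=1}^n (h+\gb\hat\gd_k)\gd_k}\gd_n \Big)
= e^{h\,\cdot\,} \text{-type sum over contacts of }\tau,
\end{equation}
where each common contact point of $\tau$ and $\htau$ carries an extra weight $e^{\gb}-1$ relative to a contact of $\tau$ that is not in $\htau$; more precisely one rewrites $e^{\gb\hat\gd_k\gd_k} = 1 + (e^\gb-1)\hat\gd_k\gd_k$ and expands, so that $Z^a_{n,\gb,h}$ becomes a sum over subsets of contact points, those selected being forced to lie in $\htau$. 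The key structural fact to extract is a \emph{renewal decomposition at the points of $\tilde\tau = \tau\cap\htau$}: conditioning on $\tilde\tau\cap\{1,\dots,n\}$, the contributions of $\tau$ and $\htau$ in the disjoint blocks between consecutive points of $\tilde\tau$ factorize (by the renewal property of $\tau$ and the \emph{independent} renewal property of $\htau$). This yields a clean expression of the form $Z^a_{n,\gb,h} = \tilde\bE\big(\prod_j \Phi(\tilde\tau_j-\tilde\tau_{j-1})\,\tilde\gd_n\big)$ for an appropriate per-block weight $\Phi$ depending on $\gb,h$, $K$ and $\hat K$.

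Once $Z^a$ is written as a pinning-type partition function over the renewal $\tilde\tau$ with per-block weights, superadditivity is immediate: decomposing a path on $\{1,\dots,m+n\}$ according to whether there is a point of $\tilde\tau$ at $m$ gives $Z^a_{m+n,\gb,h}\ge Z^a_{m,\gb,h}\,Z^a_{n,\gb,h}$ (the contact at $m$ restricts the sum, hence the inequality; one must check the block weight $\Phi$ is itself built so that this restriction is legitimate, i.e.\ that the event $\{n\in\tau\cap\htau\}$ is compatible with the factorization — this is where the joint renewal structure is used). Then Fekete's lemma gives existence of $F^a(\gb,h) = \lim_n \frac1n\log Z^a_{n,\gb,h} = \sup_n \frac1n\log Z^a_{n,\gb,h} \in [0,\infty]$. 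Non-negativity follows because restricting the expectation in \eqref{eq:defannpf} to the single path $\tau=\htau=\bbN_0$ (or more simply to $\{\tau_1>n\}$, giving $Z^a_{n,\gb,h}\ge K(n)\,\hat\bE(\cdot)$ which is only polynomially small) shows $\frac1n\log Z^a_{n,\gb,h}$ is bounded below by a vanishing sequence; alternatively, $Z^a_{n,\gb,h}\ge \bE(e^{h\gd_n}\gd_n)\cdot(\text{const})\ge K(n)$ up to constants, so $\liminf \ge 0$.

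For finiteness I would bound from above: since $\hat\gd_k\le 1$, we have $Z^a_{n,\gb,h}\le \bE\,\hE(e^{(h+\gb)\sum_{k=1}^n\gd_k}\gd_n) = \bE(e^{(h+\gb)|\tau\cap\{1,\dots,n\}|}\gd_n)$, which is exactly the homogeneous pinning partition function at chemical potential $h+\gb$; its free energy is finite (equal to the homogeneous free energy $\mathrm F(h+\gb)<\infty$, bounded by $\max(h+\gb,0)$ plus a constant), so $F^a(\gb,h)\le \mathrm{F}^{\hom}(h+\gb)<\infty$. Finally, for the stationary version when $\hmu<\infty$: replacing $\hat\bP$ by $\hat\bP_s$ only changes the law of $\htau_0$, i.e.\ shifts the whole configuration of $\htau$ by a random amount $\htau_0$ with a fixed (deterministic, $n$-independent) distribution; since that shift affects only an $o(n)$ boundary region, the partition functions $Z^{a}_{n,\gb,h}$ under $\hat\bP$ and $\hat\bP_s$ differ by at most a multiplicative factor that is subexponential in $n$ — one makes this precise by conditioning on $\htau_0=j$ and comparing with the non-stationary $Z^a$ started after time $j$, using $\sum_j \hat\bP_s(\htau_0=j)=1$ and the monotonicity/regularity of $Z^a$ in its length. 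Dividing by $n$ and letting $n\to\infty$ shows the two free energies coincide. I expect the main obstacle to be the first step: writing down the renewal decomposition at $\tilde\tau$ cleanly enough that superadditivity is transparent and the block weight $\Phi$ is manifestly well defined (in particular handling the interplay between the pinning constraint $\gd_n$ and the two independent renewal structures); the finiteness and stationary-swap parts are routine comparisons with the homogeneous model.
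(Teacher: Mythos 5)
Your overall strategy (exploit the renewal structure of $\tilde\tau=\tau\cap\htau$, get the limit by superadditivity, compare with the homogeneous model for finiteness) is the same as the paper's, but there is a genuine gap at the central step. The superadditivity you claim, $Z^a_{m+n,\gb,h}\ge Z^a_{m,\gb,h}Z^a_{n,\gb,h}$, does not follow from inserting a point of $\tilde\tau$ at $m$: the disorder expectation $\hE$ only factorizes at $m$ if you \emph{impose} $m\in\htau$ as well, so what you actually get is $Z^a_{m+n}\ge Z^{a,c}_m\,Z^a_n$, where $Z^{a,c}_m=\hE\bE\big(e^{\sum_{k\le m}(h+\gb\hat\gd_k)\gd_k}\gd_m\hat\gd_m\big)$ is the \emph{fully-pinned} partition function (both endpoints in $\tau$ and in $\htau$). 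For the same reason your claimed identity $Z^a_n=\tilde\bE\big(\prod_j\Phi(\tilde\tau_j-\tilde\tau_{j-1})\,\tilde\gd_n\big)$ is the representation of $Z^{a,c}_n$, not of $Z^a_n$, which is pinned only at $\tau$; conditioning on $\tilde\tau\cap[0,n]$ does not close up the last block unless $n\in\htau$. So Fekete's lemma applies to $Z^{a,c}$ (this part of your argument is fine and is exactly what the paper does), but it does not apply to $Z^a$ itself, and the existence of $\lim n^{-1}\log Z^a_n$ is precisely the point at issue. The missing ingredient is the comparison showing that the boundary condition is harmless: the paper decomposes $Z^a_n$ at the \emph{last} point $r$ of $\tilde\tau$ before $n$ and bounds $Z^a_n\le e^{nF^{a,c}}\sum_{r=0}^n \hP(r\in\htau)^{-1}$, then invokes the renewal mass function estimates (Proposition~\ref{RMassFunctionProp}) to see that this correction is only polynomial in $n$, whence $F^a$ exists and equals $F^{a,c}$. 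You flag the endpoint issue as ``the main obstacle'' but do not resolve it, and it is where the actual work lies.

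Two secondary points. First, in your non-negativity argument the restriction to $\{\tau_1>n\}$ is incompatible with the constraint $\gd_n$ (it gives $0$); the correct elementary bound is the single-jump restriction $\tau_1=n$, giving $Z^a_n\ge e^hK(n)$, which decays only polynomially. Second, for the stationary version your justification that the random shift by $\htau_0$ ``affects only an $o(n)$ boundary region'' is not accurate: conditioning on $\htau_0=j$ shifts the whole environment, not just a boundary window. The paper instead gets $Z^{a,s}_n\ge Z^a_n/\hmu$ by restricting to $\{0\in\htau\}$, and for the upper bound decomposes on the first point $\tilde\tau_1$ of $\tau\cap\htau$ (before which only the weight $h$ acts), yielding $Z_n^{a,s}\le e^{n(F(0,h)+o(1))}+\sum_k e^{kF(0,h)}Z^a_{n-k}$; some such decomposition at a common renewal point is needed to make your ``subexponential factor'' claim precise.
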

\par The following basic properties of the annealed free energy are standard: the function $(\gb,h) \mapsto F^a(\gb,h)$ is convex, continuous and non-decreasing in both variables.
\subsubsection{An auxiliary function: the number of intersection points}

Before stating further results, we need to introduce an auxiliary function which will help us to characterize the annealed critical point. For $h \leq 0$, denote by $\bP_h$ the probability of the transient renewal process with $\tau_0 = 0$ and inter-arrival law
\begin{equation} \label{hTransRenewal}
K_h(n) = e^h K(n), \quad n \in \bbN, \qquad K_h(\infty) = 1 - e^h.
\end{equation}
We denote the corresponding expectation by $\bE_h$. The expected number of points in the renewal set $\tilde\tau$ (including $0$) under the law $\bP_{h} \times \hP$ is denoted by
\begin{equation}
\cI(h) := \bE_h \hE(|\tilde\tau|) \in [1,\infty].
\end{equation}
Note that
\begin{equation} \label{eq:cI}
\begin{aligned}
\cI(h) &= \sum_{n \in \bbN_0} \bP_h(n \in \tau) \hP(n \in \htau) = \sum_{n,k \in \bbN_0} e^{h k} \bP(\tau_k = n) \hP(n \in \htau)\\
&= \sum_{k \in \bbN_0} e^{h k} \bP \times \hP(\tau_k \in \htau).
\end{aligned}
\end{equation}
The function $\cI$ is finite and infinitely differentiable in $(- \infty, 0)$. It is also continuous in $(- \infty, 0]$, increasing and strictly convex. Its range is $[1,\cI(0)]$ with $\cI(0) = \bE \hE(|\tau \cap \htau|)$, which may be finite or infinite. It follows from Proposition~\ref{meanReturns} that
\begin{equation} \label{eq:rel_p_I}
p(h) := \bP_h \times \hP(\tilde\tau_1 < \infty) = 1 - \cI(h)^{-1}.
\end{equation}
\par Our next result provides an expression for the annealed critical curve involving the function $\cI$.
\begin{proposition} \label{pr:ann.crit.curve}
Let $\gb_0 = - \log p(0)$. The annealed critical curve is
\begin{equation} \label{anncc}
h^a_c(\gb) =
\begin{cases}
\cI^{-1} \Big(\frac{1}{1 - e^{-\gb}} \Big) & \text{ if } \gb > \gb_0, \\
0 & \text{ if } 0 \leq \gb \leq \gb_0.
\end{cases}
\end{equation}
\end{proposition}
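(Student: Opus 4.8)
The plan is to compute the annealed free energy explicitly by exploiting the renewal structure of the intersection set $\tilde\tau = \tau\cap\htau$. First I would rewrite the annealed partition function. Since $\go_k = \hat\gd_k$ and $\hat\gd_k\gd_k = \tilde\gd_k$, the sum in the exponent is $\sum_{k=1}^n (h+\gb\hat\gd_k)\gd_k = h|\tau\cap\{1,\dots,n\}| + \gb|\tilde\tau\cap\{1,\dots,n\}|$, so
\begin{equation}
Z^a_{n,\gb,h} = \bbE\bE\Big(e^{h|\tau\cap\{1,\dots,n\}|}e^{\gb|\tilde\tau\cap\{1,\dots,n\}|}\gd_n\Big).
\end{equation}
The factor $e^{h|\tau\cap\{1,\dots,n\}|}$ is a product over the jumps of $\tau$ of $e^h$, so for $h\le 0$ it exactly tilts $K$ to $K_h$ (with defect $1-e^h$) as in \eqref{hTransRenewal}; thus the $\tau$-marginal becomes the transient renewal $\bP_h$ (up to the standard normalization handled by passing to the free energy). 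The remaining factor $e^{\gb|\tilde\tau\cap\{1,\dots,n\}|}$ tilts the intersection renewal $\tilde\tau$ by $e^\gb$ at each of its points. Hence, morally, the annealed model is a \emph{homogeneous} pinning model based on the renewal $\tilde\tau$ under $\bP_h\times\hP$, with pinning parameter $\gb$.

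Next I would invoke the standard theory of homogeneous pinning (Theorem 2.1 of \cite{Gi2007}): for a renewal with inter-arrival law $\tilde K$ possibly defective with mass $\Sigma = \sum_n \tilde K(n)\le 1$, the homogeneous free energy with pinning parameter $\gb$ is positive iff $e^\gb \Sigma > 1$, i.e. iff $e^\gb \sum_n \tilde K(n) > 1$. Here the relevant renewal is $\tilde\tau$ under $\bP_h\times\hP$; its total mass is exactly $p(h) = \bP_h\times\hP(\tilde\tau_1 < \infty)$, which by \eqref{eq:rel_p_I} equals $1 - \cI(h)^{-1}$. Therefore $F^a(\gb,h) > 0$ iff $e^\gb p(h) > 1$, equivalently $p(h) > e^{-\gb}$, equivalently $\cI(h) > (1-e^{-\gb})^{-1}$. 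Since $\cI$ is continuous, strictly increasing on $(-\infty,0]$ with range $[1,\cI(0)]$, and $h\mapsto h^a_c(\gb)$ is the infimum of the $h$ with positive free energy, one inverts: if $(1-e^{-\gb})^{-1}$ lies in the range of $\cI$, i.e. if $\cI(0)\ge (1-e^{-\gb})^{-1}$, which by \eqref{eq:rel_p_I} with $h=0$ means $p(0)\ge e^{-\gb}$, i.e. $\gb\ge -\log p(0) = \gb_0$, then $h^a_c(\gb) = \cI^{-1}((1-e^{-\gb})^{-1})$; and for $0\le\gb\le\gb_0$ we get $\cI(h) > (1-e^{-\gb})^{-1}$ already at all $h<0$ close to $0$ (in fact $\cI(0^-)$ already exceeds the threshold strictly when $\gb<\gb_0$), so the free energy is positive for all $h>0$ and indeed for $h=0$ is not, giving $h^a_c(\gb) = 0$. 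A short separate check handles the boundary case $\gb = \gb_0$ and confirms $h=0$ is critical there.

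The main obstacle is making the reduction ``annealed $=$ homogeneous pinning on $\tilde\tau$'' rigorous at the level of free energies, in particular (a) justifying that the $e^h$-tilt of $K$ to $K_h$ does not change the free energy (the usual argument: $Z^a$ equals the partition function of the homogeneous model on $\tilde\tau$ under $\bP_h\times\hP$ times a boundary/normalization term that is subexponential, so the $\frac1n\log$ limits coincide), and (b) dealing with the case $h>0$, where $e^h K(n)$ is no longer a sub-probability and the tilting picture must be replaced by a direct comparison — here one uses that for $h>0$ the annealed free energy is automatically positive (it dominates the homogeneous free energy of $\tau$ with positive pinning, which is positive), so $h^a_c(\gb)\le 0$ always and only the regime $h\le 0$ needs the $\cI$ computation. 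I would also need Proposition~\ref{pr:exist.ann.free.energy} for existence of $F^a$ and the stationary-version remark, and the quoted properties of $\cI$ (finiteness, monotonicity, strict convexity, continuity up to $0$, range $[1,\cI(0)]$) together with \eqref{eq:rel_p_I}, all of which are stated earlier. The remaining steps — the explicit inversion of $\cI$ and the case split at $\gb_0$ — are then routine.
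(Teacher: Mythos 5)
Your proposal follows essentially the same route as the paper: for $h\le 0$ the (pinned) annealed partition function is rewritten as a homogeneous pinning model for $\tilde\tau$ under $\bP_h\times\hP$ with parameter $\gb$ (this is exactly the content of Lemma~\ref{lem:acc}), the standard positivity criterion $e^{\gb}p(h)>1$ together with \eqref{eq:rel_p_I} gives $F^a(\gb,h)>0$ iff $\cI(h)>(1-e^{-\gb})^{-1}$, the regime $h>0$ is handled by comparison with the recurrent homogeneous model for $\tau$, and the formula follows by inverting $\cI$ using its stated monotonicity, continuity and range. One sentence needs fixing, though: in the case $0\le\gb\le\gb_0$ you assert that $\cI(h)>(1-e^{-\gb})^{-1}$ for $h<0$ close to $0$, but the inequality goes the other way — for $\gb<\gb_0$ one has $(1-e^{-\gb})^{-1}>(1-e^{-\gb_0})^{-1}=\cI(0)\ge\cI(h)$ for all $h\le 0$, which is precisely what yields $F^a(\gb,h)=0$ on $h\le 0$ and hence, combined with positivity for $h>0$, $h_c^a(\gb)=0$; as written, your inequality would instead force $h_c^a(\gb)<0$, contradicting the conclusion you draw in the same sentence. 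With that direction corrected (and the boundary case $\gb=\gb_0$ absorbed by the non-strict version of the criterion, as in Lemma~\ref{lem:acc}), the argument coincides with the paper's proof.
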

\begin{remark} \label{rmk:signbeta0}
From~\eqref{eq:rel_p_I} we have that $\gb_0 = - \log(1 - \{\bE \hE(|\tilde\tau|)\}^{-1})$ is non-negative. Therefore, using Proposition~\ref{RMassFunctionProp}, we see that
\begin{equation} \label{eq:logp0}
\gb_0 \, 
\begin{cases}
>0 & \text{ if } \ga + \ha < 1,\\ 
=0 & \text{ if } \ga + \ha > 1.
\end{cases}
\end{equation}
\end{remark}
By the properties of $\cI$ we get that $\gb \mapsto h_c^a(\gb)$ is infinitely differentiable in $[0,\infty) \sm \{- \log p(0)\}$ and has negative derivative in $(- \log p(0),\infty)$. Moreover, $\gb \mapsto h_c^a(\gb)$ is concave because $(\gb,h) \mapsto F^a(\gb,h)$ is convex, see Figure~\ref{fig.ann.cc}.\\

The next two propositions provide the scaling behaviour of the annealed critical curve close to $\gb_0$.
\begin{proposition} \label{pr:ann.cc.scaling}
Suppose $\ga + \ha > 1$ (then $\gb_0 = 0$). There exists $c_a > 0$ such that
\begin{equation}
h_c^a(\gb) = - \frac{\gb}{\hmu} - c_a \gb^{\gann}[1 + o(1)], \quad \text{ as }\gb \searrow 0,
\end{equation}
where
\begin{equation} \label{eq:defgann}
\gann = 
\left\{
\begin{array}{lll}
1 + \Big[ \frac{\ha -1}{\ga \wedge 1}  \wedge 1 \Big]. & \mbox{ if } \ha > 1 \mbox{ and } \ha \neq 1 + \ga \wedge 1\\
\frac{\ga \wedge 1}{\ha -1 + \ga \wedge 1} & \mbox{ if } \ha < 1.
\end{array}
\right.
\end{equation}
If $\ha = 1 + \ga \wedge 1$, we get instead
\beq
h_c^a(\gb)  = - \frac{\gb}{\hmu} - c_a \gb^2 | \log \gb |[1 + o(1)].
\eeq
\end{proposition}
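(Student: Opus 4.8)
The plan is to analyse the behaviour of $\cI(h)$ as $h \nearrow 0$ and then invert the relation $\cI(h_c^a(\gb)) = 1/(1-e^{-\gb})$ from Proposition~\ref{pr:ann.crit.curve}. The first step is to rewrite $\cI(0) - \cI(h)$ in a tractable form. Since $\cI(h) = \sum_{n} \bP_h(n\in\tau)\,\hP(n\in\htau)$ and $\bP_h(n\in\tau) = e^{?}\cdots$ involves the $h$-tilted renewal, I would use the decomposition $\cI(0)-\cI(h) = \sum_n \hP(n\in\htau)\bigl(\bP(n\in\tau) - \bP_h(n\in\tau)\bigr)$. Under the assumption $\ha+\ga>1$ one has $\cI(0)=\infty$ exactly when $\ga+\ha\le 1$, so here $\cI(0)$ may be finite or infinite; but $\gb_0=0$, so we only need the local behaviour near $h=0$ where $\cI(h)\to \cI(0)$ (which could be $+\infty$, corresponding to $h_c^a(\gb)\to 0$ as $\gb\to 0$ but with the expansion coming from the blow-up rate). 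The cleanest route is via the renewal mass function: using the generating-function identity $\sum_n \bP_h(n\in\tau) x^n$-type manipulations, or more directly the formula $\cI(h) = \sum_{k\ge 0} e^{hk}\,\bP\times\hP(\tau_k\in\htau)$ from~\eqref{eq:cI}. Write $u_k := \bP\times\hP(\tau_k \in \htau)$; then $\cI(h) = \sum_k e^{hk} u_k$, and I need the tail behaviour of $u_k$ as $k\to\infty$, i.e.\ the probability that the $k$-th point of $\tau$ lands in $\htau$.

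The heart of the matter is thus a local limit / renewal estimate for $u_k$. By the renewal theorem for $\htau$ (when $\ha>1$), $\hP(m\in\htau)\to 1/\hmu$, with correction of order $m^{-(\ha-1)}$ by~\eqref{eq:ren.conv.est} (or $\hP(m\in\htau)\sim c\,m^{\ha-1}$ when $\ha<1$, by Proposition~\ref{RMassFunctionProp}). Meanwhile $\tau_k$ concentrates: if $\ga>1$, $\tau_k\sim \mu k$ with fluctuations, so $u_k \approx \e[\hP(\tau_k\in\htau)]\to 1/\hmu$ and $u_k - 1/\hmu$ is governed by $\e[(\tau_k)^{-(\ha-1)}]$, which is of order $k^{-(\ha-1)}$ when $\ga>1$, but of order $k^{-(\ha-1)/\ga}$ when $\ga<1$ (since then $\tau_k\sim k^{1/\ga}$ up to a stable law). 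This gives $u_k - 1/\hmu \asymp k^{-[(\ha-1)/(\ga\wedge1)]}$, and similarly when $\ha<1$ one gets $u_k \asymp k^{-[1-(\ha-1+\ga\wedge1)/(\ga\wedge1)]}$-type decay after the appropriate scaling. Feeding $u_k = 1/\hmu + (\text{error})$ into $\cI(h) = \sum_k e^{hk} u_k$: the main term gives $\frac{1}{\hmu}\cdot\frac{1}{1-e^h} \sim \frac{1}{\hmu|h|}$, and the error term gives, by a standard Tauberian/Abelian argument, a contribution of order $|h|^{\theta}$ with $\theta = [(\ha-1)/(\ga\wedge1)]\wedge 1 - 1$ (negative, so it is also a blow-up, but slower), with a logarithmic correction precisely at the borderline $\ha = 1+\ga\wedge1$. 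I would isolate this carefully: $\cI(h) - \frac{1}{\hmu(1-e^h)} \sim -c|h|^{\gann-1}$ or $\sim -c|h|^{\gann-1}$... one must track the exact power.

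Finally I would invert. Setting $\cI(h_c^a(\gb)) = 1/(1-e^{-\gb})$ and writing $h = h_c^a(\gb)$, the leading relation $\frac{1}{\hmu(1-e^{h})} = \frac{1}{1-e^{-\gb}} + (\text{lower order})$ gives $h \sim -\gb/\hmu$ at first order. Substituting $h = -\gb/\hmu + r$ with $r = o(\gb)$ and expanding both sides to the next order — the main term contributes a quadratic correction $\asymp \gb^2$ from the Taylor expansion of $1/(1-e^h)$, while the error term contributes $\asymp |h|^{\gann-1} \asymp \gb^{\gann-1}$ — one finds $r$ is of order $\gb^{\min(2,\gann)}$, and since $\gann\le 2$ in the regime of interest (equality being the log case) the dominant correction is exactly $-c_a\gb^{\gann}$, or $-c_a\gb^2|\log\gb|$ when $\gann=2$. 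The main obstacle is the sharp two-sided estimate for $u_k = \bP\times\hP(\tau_k\in\htau)$ in the subcritical-$\ga$ regime $\ga<1$: this requires combining the stable-law local limit theorem for $\tau_k$ with the renewal estimate~\eqref{eq:ren.conv.est} for $\htau$, controlling uniformity of the error and the contribution of the range where $\tau_k$ is atypically small. I expect this to be where most of the technical work (and the appendix lemmas on renewal theory) is needed; the inversion step afterwards is routine asymptotic bookkeeping.
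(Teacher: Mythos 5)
Your plan is, in outline, exactly the paper's proof: invert the relation $\cI(h_c^a(\gb)) = (1-e^{-\gb})^{-1}$ from Proposition~\ref{pr:ann.crit.curve}, expand $\cI(h)=\sum_{k\ge0}e^{hk}\,\bP\times\hP(\tau_k\in\htau)$, obtain the tail of $u_k=\bP\times\hP(\tau_k\in\htau)$ by combining the renewal convergence estimate \eqref{eq:ren.conv.est} for $\htau$ (or Proposition~\ref{RMassFunctionProp} when $\ha<1$) with the asymptotics of negative moments $\bE(\tau_k^{-r})$ coming from the stable limit of $\tau_k$ when $\ga<1$ (this is Lemmas~\ref{lem:asympt.0} and~\ref{lem:conv.power.tauk}), convert this into the $h\uparrow0^-$ behaviour of $\cI(h)-\frac{1}{\hmu}\frac{1}{1-e^h}$ by an Abelian/Tauberian step (Lemma~\ref{lem:asympt.I}), and finally match with $h=-\frac{\gb}{\hmu}(1+\gep_\gb)$. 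The exponents you extract, including the logarithmic correction at $\ha=1+\ga\wedge1$ and the case $\ha<1$, are the right ones.

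There is, however, one genuine point that your ``routine asymptotic bookkeeping'' misses, precisely in the regime $\ha>1+\ga\wedge1$, i.e.\ $\gann=2$. There the correction $\cI(h)-\frac{1}{\hmu}\frac{1}{1-e^h}$ tends to a finite constant $c$, and in the inversion this constant competes with the quadratic Taylor terms of $(1-e^{h})^{-1}$ and $(1-e^{-\gb})^{-1}$: the coefficient of $\gb^2$ in $h_c^a(\gb)+\gb/\hmu$ comes out as $-\frac{1}{\hmu}\bigl(c-\tfrac12(1-\hmu^{-1})\bigr)$. The assertion $c_a>0$ (Jensen only yields $c_a\ge0$) therefore requires the strict inequality $c>\tfrac12(1-\hmu^{-1})$, which does not follow from the Tauberian analysis alone; the paper obtains it by viewing $\gs=\{k\colon\tau_k\in\htau\}$ as a renewal process with mean $\hmu$ and invoking the classical identity (Feller) expressing $\sum_{k}[\bP\times\hP(k\in\gs)-\hmu^{-1}]$ through the mean and variance of $\gs_1$, so that the margin is a variance term, hence positive. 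You would need this (or a substitute) to conclude in that case. A minor slip besides: under $\ga+\ha>1$ one always has $\cI(0)=\bE\hE(|\tau\cap\htau|)=\infty$ (that is exactly why $\gb_0=0$), so the hedge that ``$\cI(0)$ may be finite or infinite'' is off, though harmless to the rest of the argument.
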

The first term $- \gb / \hmu$ simply accounts for the fact that our disorder sequence is not centered and that by the Renewal Theorem, $\lim_{n \to \infty} \hE(\hat \gd_n) = 1 / \hmu$. Note that by Jensen's inequality, $h_c^a(\gb) \leq - \gb / \hmu$, and this already gives that $c_a \geq 0$ in Proposition~\ref{pr:ann.cc.scaling}. If $\ha > 1 + \ga \wedge 1$, then $\gann = 2$, as in the i.i.d.~case, but if $\ha < 1 + \ga \wedge 1$, there is an anomalous scaling of the annealed critical curve. Moreover, if $\ha <1$ then $\hmu = \infty$ so the term $\gb/\hmu$ disappears and $\gann >1$ gives the first order term.
\begin{proposition} \label{pr:ann.cc.scaling2}
Suppose $\ga + \ha < 1$ (then $\gb_0 > 0$). As $\gb \searrow \gb_0$, there is a constant $c \in (- \infty,0)$ such that
\begin{equation}
h_c^a(\gb) \sim c\frac{(\gb - \gb_0)^{\gann}}{1+|\log(\gb-\gb_0)|\ind_{\{1-\hat \ga=2\ga\}}}, \quad \text{where} \quad
\gann = 1 \vee \frac{\ga}{1 - \ga - \ha}.
\end{equation}
\end{proposition}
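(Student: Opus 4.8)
\textbf{Proof strategy for Proposition~\ref{pr:ann.cc.scaling2}.}
The plan is to exploit the characterization $h_c^a(\gb) = \cI^{-1}\bigl(1/(1-e^{-\gb})\bigr)$ from Proposition~\ref{pr:ann.crit.curve}, valid for $\gb > \gb_0$, and to analyze the behaviour of $\cI$ near the left endpoint $h = 0$ of its domain, since $\gb \searrow \gb_0$ forces $1/(1-e^{-\gb}) \searrow 1/(1-e^{-\gb_0}) = \cI(0)$, hence $h_c^a(\gb) \nearrow 0^-$. The key analytic input is a two-term expansion of $\cI(h)$ as $h \nearrow 0$ of the form $\cI(h) = \cI(0) - c_1 |h|^{\gt}(1+o(1))$ (possibly with a logarithmic correction) for a suitable exponent $\gt$ and constant $c_1 > 0$; inverting this relation and composing with the expansion $1/(1-e^{-\gb}) - \cI(0) \sim \cI(0)^2 e^{-\gb_0}(\gb - \gb_0)$ as $\gb \searrow \gb_0$ then yields the claimed power law with $\gann = 1/\gt$ (truncated at $1$ from below when the naive exponent would be too small).

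First I would derive the expansion of $\cI(h)$. Writing $\cI(0) - \cI(h) = \sum_{n\in\bbN_0}\bigl(\bP(n\in\tau) - \bP_h(n\in\tau)\bigr)\hP(n\in\htau)$ and using that, in the regime $\ga + \ha < 1$ (so $\gt(\cdot)$-type renewal-mass asymptotics apply), $\bP(n\in\tau)\sim C_\ga n^{\ga-1}$ and $\hP(n\in\htau)\sim C_{\ha} n^{\ha-1}$ by the strong renewal theorem (Proposition~\ref{RMassFunctionProp} in the appendix), the summand behaves like $n^{\ga + \ha - 2}$ times a factor measuring how much the tilt $K_h(n) = e^h K(n)$ with defect mass $1 - e^h \approx |h|$ depletes $\bP_h(n\in\tau)$ relative to $\bP(n\in\tau)$. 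The standard heuristic is that $\bP_h(n\in\tau)$ agrees with $\bP(n\in\tau)$ for $n \ll 1/|h|^{1/(\ga\wedge1)}$ and is exponentially smaller beyond that scale; summing $n^{\ga+\ha-2}$ up to the crossover scale $n_* \sim |h|^{-1/(\ga\wedge 1)}$ gives (since $\ga + \ha - 1 < 0$, the sum converges at infinity but the \emph{difference} is dominated by the tail beyond $n_*$) a contribution of order $n_*^{\ga+\ha-1} \sim |h|^{(1-\ga-\ha)/(\ga\wedge1)}$ when $\ga + \ha - 1 < 0$. Here, because $\ga + \ha < 1$, one has $\ga < 1$, so $\ga \wedge 1 = \ga$ and $\gt = (1-\ga-\ha)/\ga$, giving $\gann = \ga/(1-\ga-\ha)$; the logarithmic correction appears precisely in the boundary case where the relevant Karamata-type sum is barely divergent, which matches the condition $1 - \ha = 2\ga$ stated in the proposition. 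Making this rigorous requires the sharp local estimates on $\bP_h(n\in\tau)$ as a function of the defect $|h|$ — this is the analogue of the computations behind Proposition~\ref{pr:ann.cc.scaling} and should follow from the renewal-theoretic lemmas in the appendix together with a dominated-convergence / Tauberian argument controlling the sum $\sum_n (\bP(n\in\tau) - \bP_h(n\in\tau))\hP(n\in\htau)$.

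Next I would combine the pieces. From $\cI(h) = \cI(0) - c_1 |h|^{\gt}(1+o(1))$ (with the log factor inserted in the boundary case) we get $\cI^{-1}(\cI(0) - x) \sim -(x/c_1)^{1/\gt}$, i.e. $h$ is a negative power $1/\gt$ of the deficit. Plugging in $x = 1/(1-e^{-\gb}) - \cI(0) \asymp \gb - \gb_0$ yields $h_c^a(\gb) \asymp -(\gb - \gb_0)^{1/\gt}$, and since $1/\gt = \ga/(1-\ga-\ha)$ this is the unbounded exponent in the statement. The truncation $\gann = 1 \vee \ga/(1-\ga-\ha)$ reflects the regime where $\ga/(1-\ga-\ha) \le 1$: there, the sum $\sum_n (\bP(n\in\tau) - \bP_h(n\in\tau))\hP(n\in\htau)$ is instead dominated by its \emph{small-$n$} part where the deficit is linear in $|h|$, so $\cI(0) - \cI(h) \asymp |h|$ and the inversion gives $h_c^a(\gb) \asymp -(\gb-\gb_0)$; one must verify the crossover between these two mechanisms occurs exactly at $\ga/(1-\ga-\ha) = 1$, again with a logarithmic correction at the threshold. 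The main obstacle is the uniform control of $\bP(n\in\tau) - \bP_h(n\in\tau)$ across the whole range of $n$ as $h \nearrow 0$: one needs both that this difference is nonnegative (monotonicity of the renewal mass in the tilt, which is intuitively clear and should be provable via a comparison of the renewal equations, cf.~the log-convexity remark after \eqref{ReturnTimeDistributions}) and that it has the right two-scale structure, so that the dominated-convergence argument identifying the leading constant $c$ goes through; the appearance of the precise logarithmic correction at $1 - \ha = 2\ga$ is the delicate bookkeeping point.
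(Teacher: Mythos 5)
Your overall scheme is the same as the paper's: use $\cI(h_c^a(\gb)) = (1-e^{-\gb})^{-1}$, note that the right-hand side moves linearly away from $\cI(0)$ as $\gb \searrow \gb_0$, and invert a two-term expansion of $\cI$ at $0^-$. The gap is in the one step that carries all the content, namely the asymptotics $\cI(0)-\cI(h) \sim c\,|h|^{1\wedge\frac{1-\ha-\ga}{\ga}}$ (with the log at $1-\ha=2\ga$). You propose to obtain it from the site-space decomposition $\cI(0)-\cI(h)=\sum_n\bigl(\bP(n\in\tau)-\bP_h(n\in\tau)\bigr)\hP(n\in\htau)$ together with a two-scale heuristic for the tilted mass function $\bP_h(n\in\tau)$ (agreement with $\bP(n\in\tau)$ below $n_*\sim|h|^{-1/\ga}$, exponential depletion above). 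Making that heuristic quantitative \emph{with constants}, uniformly in $n$ as $h\uparrow 0$, is precisely the hard analytic work, and nothing in the paper's appendix supplies it; moreover your bookkeeping is not quite right: when $2\ga+\ha>1$ the region $n\le n_*$ (where the deficit is of order $|h|\,n^{\ga}\bP(n\in\tau)$) contributes at the \emph{same} order $|h|^{(1-\ga-\ha)/\ga}$ as the tail $n>n_*$, so "dominated by the tail beyond $n_*$" is false and would derail the identification of the constant $c$ and of the threshold behaviour. (On the other hand the monotonicity you worry about is trivial: $\bP(n\in\tau)-\bP_h(n\in\tau)=\sum_{k}(1-e^{hk})\bP(\tau_k=n)\ge 0$; no log-convexity is needed.)

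The missing idea, which is how the paper proceeds, is to sum over the renewal index rather than over sites (equivalently, apply Tonelli to your double sum): from \eqref{eq:cI}, $\cI(0)-\cI(h)=\sum_{k\in\bbN_0}(1-e^{hk})\,\bE\hP(\tau_k\in\htau)$, so the $h$-dependence is an exact factor $1-e^{hk}$ and no estimate on $\bP_h(n\in\tau)$ is needed at all. Since $\ga+\ha<1$ forces $\ga,\ha<1$, Lemma~\ref{lem:asympt.0}(ii) (strong renewal theorem for $\htau$ plus the stable limit $\tau_k\approx k^{1/\ga}$ via Lemma~\ref{lem:conv.power.tauk}) gives $\bE\hP(\tau_k\in\htau)\sim c\,k^{-\gl}$ with $\gl=\frac{1-\ha}{\ga}>1$, and the elementary Abelian Lemma~\ref{seriesAsymptotics}(i) then yields exactly $c|h|^{1\wedge(\gl-1)}$ with a logarithm at $\gl=2$, i.e.\ at $1-\ha=2\ga$. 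Combining with $\cI(0)-\cI(h_c^a(\gb))\sim \frac{e^{-\gb_0}}{(1-e^{-\gb_0})^2}(\gb-\gb_0)$ and inverting gives the claimed $\gann=1\vee\frac{\ga}{1-\ga-\ha}$. So your plan can be repaired in one line by this interchange of summation, but as written the central expansion is only asserted, via a route that is both harder and partly inaccurate.
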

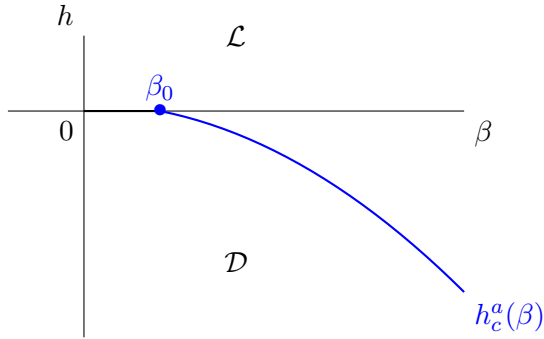
\begin{figure}
\begin{center}
\begin{tikzpicture}
\draw (-1,0)--(5,0) ;
\draw (0,1)--(0,-3) ;
\draw (0,0) node[below left]{$0$} ;
\draw (0,1) node[above left]{$h$} ;
\draw (5,0) node[below right]{$\gb$} ;
\draw (2,-2) node{$\mathcal{D}$} ;
\draw (2,1) node{$\mathcal{L}$} ;
\draw [
, thick ] (0,0)--(1,0) ;
\draw [blue] (1,0) node{$\bullet$} ;
\draw [blue] (1,0) node[above]{$\gb_0$};
\draw [blue, thick ]plot [domain=1:5] (\x, -0.1*\x*\x + 0.1) ;
\draw [blue] (5,-2.4) node[below right]{$h_c^a(\gb)$};
\end{tikzpicture}
\end{center}
\caption{\label{fig.ann.cc}Shape of the annealed critical curve (in blue). The critical point $\gb_0 = -\log p(0)$ and the slope at $\gb_0$ might be positive or equal to zero, depending on the values of $\ga$ and $\hat\ga$, see Remark~\ref{rmk:signbeta0} and Proposition~\ref{pr:ann.cc.scaling2}.}
\end{figure}
\par Our next result is about the order of the annealed phase transition.
\begin{proposition}[The annealed critical exponent] \label{ACExp}
Suppose $\ha > 0$. Let $\gb > 0$. There exists a constant $C = C(\gb) \in (0,\infty)$ such that
\begin{equation} \label{ACExpRelation}
(1/C) \leq \frac{F^a(\gb,h)}{(h - h_c^a(\gb))^{\nu_a(\gb)}} \leq C
\end{equation}
for all $0 < h - h_c^a(\gb) \leq 1$,
with
\begin{equation}
\nu_a(\gb) :=
\begin{cases}
\frac{1}{\ga_{\ef}} \vee 1 & \text{ if } \gb > \gb_0,\\
\frac{1}{\ga} \vee 1 & \text{ if } \, 0 \leq \gb \leq \gb_0,
\end{cases}
\end{equation}
where $\ga_\ef := \ga + (1 - \ha)_+$ and $(a)_+ := \max\{a,0\}$.
\end{proposition}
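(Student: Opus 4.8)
The plan is to reduce the annealed free energy to a single implicit equation (as in the homogeneous theory) and then read off $\nu_a(\gb)$ from an Abelian estimate on a renewal Green's function; the regimes $\gb>\gb_0$ and $0<\gb\le\gb_0$ need different arguments.

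\emph{Step 1 (an implicit equation for $F^a$).} Write $N_n(A):=|A\cap\{1,\dots,n\}|$. Since $\hat\gd_k\in\{0,1\}$, $e^{\gb N_n(\tilde\tau)}=\prod_{k=1}^n\bigl(1+(e^\gb-1)\gd_k\hat\gd_k\bigr)$; expanding this product over subsets $\{b_1<\dots<b_j\}\subseteq\{1,\dots,n\}$, using $\hP(\{b_1,\dots,b_j\}\subseteq\htau)=\prod_l\hP(b_l-b_{l-1}\in\htau)$ (with $b_0:=0$) and, for $h\le0$, $\bE\bigl(e^{hN_n(\tau)}\ind_{\{b_1,\dots,b_j,n\}\subseteq\tau}\bigr)=\prod_l\bP_h(b_l-b_{l-1}\in\tau)$, one gets $Z^a_{n,\gb,h}=(W_h\ast u_h)(n)$ with $u_h(\ell):=\bP_h(\ell\in\tau)$, $W_h:=\sum_{k\ge0}w_h^{\ast k}$, $w_h(\ell):=(e^\gb-1)\hP(\ell\in\htau)\bP_h(\ell\in\tau)$. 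Passing to generating functions, $\sum_{n\ge0}Z^a_{n,\gb,h}z^n=\hat u_h(z)/(1-\hat w_h(z))$; for $h<0$ the function $\hat u_h$ is finite and non-vanishing on $[0,1]$, so by Cauchy--Hadamard $e^{-F^a(\gb,h)}$ is the smallest positive zero of $1-\hat w_h$ (and $F^a(\gb,h)=0$ if $\hat w_h(1)\le1$). Since $\sum_{\ell\ge1}\hP(\ell\in\htau)\bP_h(\ell\in\tau)=\cI(h)-1$, the condition $\hat w_h(1)>1$ is exactly $h>h^a_c(\gb)$ of Proposition~\ref{pr:ann.crit.curve}, and for $h^a_c(\gb)<h\le0$,
\begin{equation}\label{eq:sketch-implicit}
(e^\gb-1)\sum_{\ell\ge1}\hP(\ell\in\htau)\,\bP_h(\ell\in\tau)\,e^{-F^a(\gb,h)\,\ell}=1 .
\end{equation}
(For $h>0$ the same identity holds with $u_h(\ell)=\bE(e^{hN_\ell(\tau)}\gd_\ell)$ the homogeneous Green's function, whose generating function is singular at $e^{-F(0,h)}$; when $\ha<1$ this singularity lies strictly beyond that of $\hat w_h$, since $\sum_\ell\hP(\ell\in\htau)=\infty$.)

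\emph{Step 2 ($\gb>\gb_0$).} Here $h^a_c(\gb)<0$, so \eqref{eq:sketch-implicit} is available on a left neighbourhood of $h^a_c(\gb)$. Put $\phi_h(\ell):=\hP(\ell\in\htau)\bP_h(\ell\in\tau)$. From \eqref{eq:sketch-implicit}, $\sum_{\ell\ge1}\phi_h(\ell)e^{-F^a(\gb,h)\ell}=\tfrac1{e^\gb-1}=\cI(h^a_c(\gb))-1$, while $\sum_{\ell\ge1}\phi_h(\ell)=\cI(h)-1$; subtracting,
$\sum_{\ell\ge1}\phi_h(\ell)\bigl(1-e^{-F^a(\gb,h)\ell}\bigr)=\cI(h)-\cI(h^a_c(\gb))$.
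The right-hand side is $\asymp h-h^a_c(\gb)$ as $h\downarrow h^a_c(\gb)$, because $\cI$ is $C^\infty$ with $\cI'>0$ on $(-\infty,0)$. For the left-hand side, renewal theory gives $\bP_h(\ell\in\tau)\sim(1-e^h)^{-2}e^hc_K\,\ell^{-(1+\ga)}$ (one-big-jump for the transient renewal $\tau$ under $\bP_h$), while $\hP(\ell\in\htau)\to\hmu^{-1}$ if $\ha>1$ and $\hP(\ell\in\htau)\sim c\,\ell^{\ha-1}$ if $\ha<1$; hence $\phi_h(\ell)\sim C_h\,\ell^{-(1+\ga_\ef)}$ with $\ga_\ef=\ga+(1-\ha)_+$ and $C_h>0$ continuous in $h$. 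The standard Abelian estimate then yields $\sum_\ell\phi_h(\ell)(1-e^{-s\ell})\asymp s^{\ga_\ef\wedge1}$ as $s\downarrow0$ (with an extra $|\log s|$ when $\ga_\ef=1$), uniformly for $h$ near $h^a_c(\gb)$. Combining, $F^a(\gb,h)\asymp(h-h^a_c(\gb))^{1/(\ga_\ef\wedge1)}=(h-h^a_c(\gb))^{\nu_a(\gb)}$; for $h-h^a_c(\gb)$ bounded away from $0$ (and $\le1$) the ratio in \eqref{ACExpRelation} is controlled by positivity and continuity of $F^a(\gb,\cdot)$, yielding \eqref{ACExpRelation} with a single $C(\gb)$.

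\emph{Step 3 ($0<\gb\le\gb_0$).} By Remark~\ref{rmk:signbeta0} this case occurs only if $\ga+\ha<1$, so $\ga<1$, $\nu_a(\gb)=1/\ga$, and $h^a_c(\gb)=0$. The lower bound is immediate from monotonicity of $F^a$ in $\gb$ and the homogeneous estimate (Theorem~2.1 in \cite{Gi2007}): $F^a(\gb,h)\ge F^a(0,h)=F(0,h)\ge c\,h^{1/\ga}$. For the upper bound, Hölder's inequality under $\bE$ followed by Jensen's inequality under $\hE$ give $Z^a_{n,\gb,h}\le(Z_{n,0,ph})^{1/p}(Z^a_{n,q\gb,0})^{1/q}$ for $1/p+1/q=1$, whence $F^a(\gb,h)\le\tfrac1pF(0,ph)+\tfrac1qF^a(q\gb,0)$; taking $q=\gb_0/\gb$ makes the last term vanish (as $F^a(\gb_0,0)=0$) and leaves $\tfrac1pF(0,ph)\asymp h^{1/\ga}$ with $p=\gb_0/(\gb_0-\gb)$ fixed. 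The borderline value $\gb=\gb_0$, not reached by this choice, is handled directly from the $h>0$ version of \eqref{eq:sketch-implicit}: writing $F^a(\gb_0,h)=F(0,h)+\eta$, one checks that $u_h(\ell)e^{-F(0,h)\ell}\asymp\bP(\ell\in\tau)$ for $\ell\lesssim1/F(0,h)$ and is exponentially damped beyond that scale, which forces $\eta\lesssim F(0,h)$, hence $F^a(\gb_0,h)\asymp h^{1/\ga}$.

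\emph{Main obstacle.} The reduction to \eqref{eq:sketch-implicit} and its Tauberian analysis are routine; the real work is the renewal-theoretic input — the sharp tail $\phi_h(\ell)\sim C_h\ell^{-(1+\ga_\ef)}$ together with enough uniformity in $h$ near $h^a_c(\gb)$, and the scale-by-scale comparison of the $h$-tilted Green's function with $\bP(\cdot\in\tau)$ in the borderline case $\gb=\gb_0$. One should also note that when $\ga_\ef=1$ the Abelian estimate carries an extra $|\log s|$, so that in this single degenerate case \eqref{ACExpRelation} must be read with the corresponding slowly varying correction supplied by the homogeneous pinning estimates of the appendix.
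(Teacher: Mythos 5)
Your reduction is genuinely different from the paper's. The paper never writes a closed equation for $F^a$: for $\gb>\gb_0$ it views the annealed model as a homogeneous pinning model for $\tilde\tau=\tau\cap\htau$ under the tilted law $\bP_{h_c^a(\gb)}\times\hP$ (see \eqref{tildeRenewal}), converts $h-h_c^a(\gb)$ into a pinning reward for that renewal via a mean-value argument, and then invokes two uniform statements proved separately (Lemma~\ref{productAsymptotics} for the mass function, Lemma~\ref{FreeEnergyU} for the homopolymer free energy over a compact family of laws); for $\gb\leq\gb_0$ it uses a H\"older bound. Your Step~1 instead derives a Poland--Scheraga-type implicit equation by the binomial expansion (the expansion the paper only uses later, in the second-moment section), and Step~2 reads the exponent off an Abelian estimate; the uniform two-sided tail $\phi_h(\ell)\asymp\ell^{-(1+\ga_\ef)}$ you need near $h_c^a(\gb)<0$ is exactly the content of Lemma~\ref{productAsymptotics}, so this part is sound (with the same slowly varying caveats at $\ga_\ef=1$ or $\ha=1$ that the paper absorbs into $\hat L$ in Lemma~\ref{FreeEnergyU}). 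Your H\"older step for $\gb<\gb_0$ is a cleaner variant of the paper's (splitting the $h$- and $\gb$-terms directly), and you rightly observed that it degenerates at $\gb=\gb_0$, a boundary value the paper's Case~2 does not really cover either: there the admissible $q$ is forced to be of order $1/h$ and the bound collapses to $F^a\lesssim h$.

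The genuine gap is your treatment of $\gb=\gb_0$. First, the claim that $u_h(\ell)e^{-F(0,h)\ell}$ is ``exponentially damped'' beyond the scale $1/F(0,h)$ is false: this quantity is the mass function of the recurrent renewal with inter-arrival law $e^{h-F(0,h)n}K(n)$, and it saturates at a positive level $\asymp F(0,h)^{1-\ga}$; the only damping in your equation comes from the extra factor $e^{-\eta\ell}$ with $\eta=F^a-F(0,h)$. Second, and more seriously, a correct expansion of your own implicit equation shows that ``$\eta\lesssim F(0,h)$'' cannot be forced in general: writing $u_h(\ell)\approx\bP(\ell\in\tau)+h\,\bE\bigl(|\tau\cap(0,\ell]|\,\ind_{\{\ell\in\tau\}}\bigr)$ with $\bE\bigl(|\tau\cap(0,\ell]|\,\ind_{\{\ell\in\tau\}}\bigr)\asymp\ell^{2\ga-1}$, the equation balances the gain $h\sum_\ell\hP(\ell\in\htau)\,\ell^{2\ga-1}e^{-F^a\ell}$ against the loss $\sum_\ell\hP(\ell\in\htau)\bP(\ell\in\tau)(1-e^{-F^a\ell})\asymp(F^a)^{1-\ga-\ha}$. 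When $2\ga+\ha<1$ the gain is $\asymp h$, so the root sits at $F^a(\gb_0,h)\asymp h^{1/(1-\ga-\ha)}\gg h^{1/\ga}$, which matches (via the natural scaling relation) the linear behaviour $\gann=1$ of the critical curve in Proposition~\ref{pr:ann.cc.scaling2} in that regime, but is incompatible with your claimed conclusion $\eta\lesssim F(0,h)$ and indicates that \eqref{ACExpRelation} at this single boundary value is more delicate than either your sketch or the paper's H\"older step acknowledges. When $2\ga+\ha\geq1$ the balance does give $\eta\asymp F(0,h)$, but even then you need a quantitative comparison of the tilted Green's function with $\bP(\ell\in\tau)$ on scales $\ell\leq1/F(0,h)$ (an error term $o(\ell^{\ga-1})$, not merely a two-sided constant bound), which you flag but do not supply. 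So Steps~1--2 and the $\gb<\gb_0$ part of Step~3 constitute a valid alternative proof, while the case $\gb=\gb_0$ remains open in your write-up.
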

Therefore, the annealed critical exponent remains unchanged compared to the homogeneous case if $\ha > 1$, but is changed for large values of $\gb$ when $\ha < 1$ and $\ga < 1$.
\subsection{Results on the quenched model}

We start with the existence of the quenched free energy.
\begin{proposition} \label{pr:exist.que.free.energy}
For $\gb > 0$ and $h \in \bbR$ the sequence $\{(1/n) \log Z_{n,\gb,h}\}_{n \in \bbN}$ converges $\hP$-a.s. and in $L^1(\hP)$ to a non-negative constant $F(\gb,h)$ called the quenched free energy. Moreover, if $\hmu = \infty$ then $F(\gb,h) = F(0,h)$, and if $\hmu < \infty$ then the convergence still holds $\hP_s$-a.s. and in $L_1(\hP_s)$ (without changing the value of the free energy).
\end{proposition}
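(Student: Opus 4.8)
The plan is to establish existence of the quenched free energy via a standard superadditivity argument applied to the pinned partition function, then to address the two dichotomies ($\hmu = \infty$ versus $\hmu < \infty$) and the equivalence between $\hP$, $\hP_s$. First I would observe that by restricting the renewal $\tau$ to pass through an intermediate point, one gets the superadditivity relation $Z_{n+m,\gb,h}(\htau) \geq Z_{n,\gb,h}(\htau)\, Z_{m,\gb,h}(\theta_n\htau)$, where $\theta_n$ is the shift on the disorder sequence; here I use that $\go_n = \hat\gd_n \geq 0$, so no correction term is needed. Taking logarithms, $(\log Z_{n,\gb,h})_{n}$ is a superadditive sequence of the shift-ergodic (in fact, here the increments of $\htau$ are i.i.d., so $\htau$ is an ergodic process under $\hP$ in the appropriate sense, and stationary-ergodic under $\hP_s$) disorder. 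Since $0 \leq \log Z_{n,\gb,h} \leq \log \bE(\gd_n) + (h^+ + \gb)\,|\htau \cap \{1,\ldots,n\}|$, and the right-hand side has finite first moment (indeed $\hE|\htau\cap\{1,\ldots,n\}| \leq n/\hmu \cdot(1+o(1))$ when $\hmu<\infty$, and in general is $O(n^{\ha\wedge 1})=O(n)$), the Kingman subadditive ergodic theorem applies and gives $\hP$-a.s. and $L^1(\hP)$ convergence of $(1/n)\log Z_{n,\gb,h}$ to a constant $F(\gb,h) = \lim_n (1/n)\,\hE(\log Z_{n,\gb,h}) \geq 0$, the constancy coming from ergodicity of the increments. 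The same argument verbatim under $\hP_s$ gives the $\hP_s$-a.s. and $L^1(\hP_s)$ convergence; that the limiting constant is unchanged follows because the two partition functions differ only through the law of $\htau_0$, which affects $\log Z_n$ by at most an additive $O(\htau_0)$ term (finite a.s.), negligible after division by $n$ — I would spell this out by noting that on the event $\{\htau_0 = j\}$, shifting removes the first $j$ coordinates of the disorder and changes $\log Z_n$ by a bounded-in-$n$ amount.

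For the claim that $F(\gb,h) = F(0,h)$ when $\hmu = \infty$ (equivalently $\ha < 1$, per the dichotomy in the excerpt), I would argue as already sketched in the introduction: $\sum_{k=1}^n \go_k \gd_k \leq |\htau \cap \{1,\ldots,n\}|$, hence $Z_{n,0,h} \leq Z_{n,\gb,h} \leq e^{\gb |\htau\cap\{1,\ldots,n\}|} Z_{n,0,h}$. Since $\ha < 1$, the renewal $\htau$ satisfies $|\htau \cap \{1,\ldots,n\}| = o(n)$ $\hP$-a.s. (and $\hP_s$-a.s.) by the strong law for renewals with infinite mean, so taking $(1/n)\log$ and letting $n\to\infty$ squeezes $F(\gb,h)$ between $F(0,h)$ and $F(0,h)$. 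Combined with the lower bound $F(\gb,h)\geq F(0,h)$ (monotonicity in $h$, or directly from $\go\geq 0$), this gives equality.

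The main obstacle — or rather the point requiring the most care — is the interchange of $\hP$ and $\hP_s$ together with the removal of the pinning constraint $\gd_n$ at the endpoint, i.e. justifying that the free, pinned, stationary and non-stationary partition functions all share the same limiting free energy. For the endpoint constraint, the standard trick (as referenced via \cite[Remark 1.2]{Gi2007}) is that removing $\gd_n$ costs at most a polynomial factor: $Z_{n,\gb,h} \leq Z^{\mathrm{free}}_{n,\gb,h} \leq \sum_{m\leq n} e^{(h^+ +\gb)} Z_{m,\gb,h}\,\bP(\tau_1 > n-m) \leq C n\, \max_{m\leq n} Z_{m,\gb,h}$, and superadditivity controls $\max_{m\leq n}Z_{m}$ by $Z_n$ up to subexponential corrections. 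For the stationary-versus-non-stationary issue I would condition on $\htau_0$ and use that under $\hP_s$ the conditional law of $(\htau_n - \htau_0)_{n\geq 1}$ given $\htau_0$ is exactly the $\hP$-law of $\htau$, reducing everything to a shift by the finite random amount $\htau_0$, which as noted perturbs $(1/n)\log Z_n$ by $o(1)$. One subtlety to flag: when $\hmu = \infty$ the stationary measure $\hP_s$ does not exist, which is why the proposition only asserts the $\hP_s$ statement under $\hmu < \infty$; and when $\hmu = \infty$ the a.s. convergence under $\hP$ still needs the Kingman theorem with the process $\htau$, which is ergodic as a function of the i.i.d. increments — this is where one invokes that the tail $\sigma$-field of an i.i.d. sequence is trivial.
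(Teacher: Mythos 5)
There is a genuine gap at the heart of your argument for the main assertion (convergence under $\hP$): Kingman's subadditive ergodic theorem cannot be applied directly to $\{\log Z_{n,\gb,h}\}_n$ under $\hP$, because the theorem requires distributional stationarity of the doubly-indexed array, i.e.\ that the law of the environment seen from site $m$ not depend on $m$. Under $\hP$ the disorder renewal is pinned at $\htau_0=0$, so the sequence $(\hat\gd_k)_{k\ge1}$ is \emph{not} stationary (for instance $\hE(\hat\gd_k)=\hP(k\in\htau)$ varies with $k$), and the i.i.d.\ increments of $\htau$ give you ergodicity but not the stationarity that Kingman needs; your parenthetical "ergodic in the appropriate sense" does not address this. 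This is exactly why the paper, in the case $\hmu<\infty$, applies Kingman under $\hP$ not to $Z_{0,n}$ but to the partition functions sampled at the disorder renewal epochs, $\cZ_{m,n}=Z_{\htau_m,\htau_n}$, whose increments \emph{are} stationary because the environment regenerates at $\htau_m$, and then converts the resulting limit $\phi(\gb,h)$ into $F(\gb,h)=\phi(\gb,h)/\hE(\htau_1)$ using $\htau_n/n\to\hmu$ together with the sandwich $Z_{0,\htau_{k(n)}}e^{h+\gb}K(n-\htau_{k(n)})\le Z_{0,n}\le e^{-(h+\gb)}Z_{0,\htau_{k(n)+1}}/K(\htau_{k(n)+1}-n)$. (An alternative repair, closer to your plan: your Kingman application under $\hP_s$ is legitimate, and one can transfer the $\hP_s$-a.s.\ statement to $\hP$ by noting $\hP=\hP_s(\,\cdot\mid\htau_0=0)$ with $\hP_s(\htau_0=0)=1/\hmu>0$; but as written you do not do this.)

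The second weak point is the transfer between $\hP_s$ and $\hP$: the claim that on $\{\htau_0=j\}$ "shifting removes the first $j$ coordinates and changes $\log Z_n$ by a bounded-in-$n$ amount" is asserted rather than proved, and is too strong as stated. The polymer is pinned at the origin, so comparing $Z^{\htau}_{n+\htau_0}$ with $Z^{\htau-\htau_0}_n$ requires controlling the overshoot of $\tau$ over $\htau_0$; the paper does this via the modified first inter-arrival law $\check\bP$ and obtains a correction that is only polynomial (hence $o(n)$ after taking $\tfrac1n\log$), see \eqref{eq:comb1}--\eqref{eq:comb2}, not bounded. Smaller slips: $\log Z_{n,\gb,h}\ge0$ is false for $h<0$ (non-negativity of $F$ follows instead from the one-jump bound $Z_{n,\gb,h}\ge e^hK(n)$); the upper bound on the Hamiltonian should be $h^+n+\gb|\htau\cap\{1,\ldots,n\}|$, not $(h^++\gb)|\htau\cap\{1,\ldots,n\}|$; and your closing remark that the case $\hmu=\infty$ "still needs Kingman" is off the mark — there your squeeze between $Z_{n,0,h}$ and $e^{\gb|\htau\cap\{1,\ldots,n\}|}Z_{n,0,h}$ already yields the limit $F(0,h)$ with no subadditivity argument, which is precisely the paper's Case 1.
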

We are able to prove the following smoothing inequality:
\begin{theorem} \label{thm:smoothing}
Let $\ha > 1$
and $\gb > 0$. There exists a constant $C = C(\gb) \in (0,\infty)$ such that for $0 \leq h - h_c(\gb) \leq 1$,
\begin{equation}
F(\gb,h) \leq C (h - h_c(\gb))^{2 \wedge \ha}{(1 + |\log (h - h_c(\gb))| \ind_{\{\ha = 2\}})}.
\end{equation}
\end{theorem}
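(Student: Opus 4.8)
The plan is to adapt the rare-stretch / coarse-graining strategy behind the Giacomin–Toninelli smoothing inequality (and its correlated-disorder extension by Berger–Lacoin) to the renewal disorder at hand. The core idea is: if the quenched free energy at $(\gb,h)$ is some $f > 0$, then along a typical disorder configuration the polymer collects free energy at rate $f$ per unit length. Now look at an atypical stretch of the environment on which $\htau$ is \emph{poor in points}, i.e.\ on which $\hat\gd$ is (essentially) zero for a long block of length $\ell$. On such a stretch the effective pinning potential is just $h\gd_k$, so the contribution is controlled by the homogeneous partition function at parameter $h$, which near criticality behaves like $\exp\{O(\ell\, \mathrm{F}(0,h))\}$ with $\mathrm{F}(0,h) \asymp (h-h_c(0))^{1/(\ga\wedge 1)}$ — but more to the point, we will want to compare with stretches where the environment is \emph{shifted} rather than removed. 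Following Berger–Lacoin, the right object is: tilt/shift the disorder on a block so as to make it look like a block where the free energy is being \emph{lost}, pay the probabilistic cost of this atypical block, and conclude by a concatenation (super-additivity) argument that $f$ cannot be too large relative to $h-h_c(\gb)$.

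Concretely, the steps I would carry out are: (1) Fix $\gb>0$ and set $\delta = h - h_c(\gb) > 0$. Choose a block length $\ell = \ell(\delta)$, to be optimized, tending to infinity as $\delta\downarrow 0$. (2) \textbf{Cost of an atypical block.} Estimate the probability under $\hP$ (or $\hP_s$) that the renewal $\htau$ has an atypical behaviour on a block of length $\ell$ — the relevant event is something like ``$\htau$ has a gap of length $\asymp \ell$'', whose probability, by \eqref{ReturnTimeDistributions}, is of order $\hK(\ell)\cdot(\text{poly}) \asymp \ell^{-\ha}$ up to constants, or more precisely $\exp\{-(\ha+o(1))\log\ell\}$. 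This is where the hypothesis $\ha>1$ and the \emph{equality} (log-convexity) in the definition of $\hK$ enter, via Lemmas~\ref{lem:mon.pro} and~\ref{lem:comp.tilt.shift} as flagged in the text: one needs a clean monotonicity/comparison between the tilted-and-shifted environment on the block and the original one. (3) \textbf{Free-energy change on an atypical block.} On the good (typical) part of the line the polymer earns $\approx \ell_{\mathrm{good}}\, f$; on the atypical block of length $\ell$ one argues, via the change-of-measure/FKG-type comparison of Lemma~\ref{lem:comp.tilt.shift}, that the conditional partition function over that block is at most $e^{c\ell\delta}$ or even $O(1)$ in $\ell$ when $\ell$ is not too large — the point being that an atypical $\htau$-poor block pushes the local model into its delocalized phase. (4) \textbf{Concatenation.} Tile $\{1,\dots,n\}$ into $n/\ell$ blocks; in each block independently ask for the atypical event; restricting the renewal $\tau$ to visit the block endpoints and using super-additivity of $\log Z$, one gets, after taking $\e_{\hP}\log(\cdot)$ and $n\to\infty$,
\begin{equation}
F(\gb,h) \;\le\; \frac{1}{\ell}\Bigl( (\text{cost of atypical block}) + (\text{free energy earned on atypical block}) + (\text{cost of pinning }\tau\text{ at endpoints})\Bigr),
\end{equation}
i.e.\ roughly $F(\gb,h) \le \frac{1}{\ell}\bigl( c\,\ha \log\ell + c\,\ell\,\delta\cdot\mathbf{1}_{\text{loss}} + \log(1/K(\ell))\bigr)$; optimizing over $\ell$ — balancing the entropic gain $1$ per block against the cost — yields $\ell \asymp \delta^{-1}$ and $F(\gb,h) \le C\,\delta^{2\wedge\ha}$, with the logarithmic correction appearing exactly at $\ha = 2$ where the two competing terms have the same polynomial order.

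The main obstacle, and the step that needs real care, is (2)–(3): constructing the tilted-and-shifted disorder block and proving the required stochastic comparison. Unlike Gaussian disorder, where one simply adds a deterministic shift and pays an explicit Gaussian density cost, here the environment is a renewal configuration, so ``shifting'' means resampling $\htau$ on the block with a modified inter-arrival law (e.g.\ a finite-dimensional tilt), and one must (a) control the Radon–Nikodym cost of this resampling — this is precisely where log-convexity of $\hK$ makes the Markov-type comparison work, since it gives monotonicity of the tilted kernel — and (b) show the resulting environment dominates, in the FKG sense relevant for the pinning Hamiltonian, a configuration on which the block model is delocalized. I expect this to be the technical heart, packaged as Lemmas~\ref{lem:mon.pro} and~\ref{lem:comp.tilt.shift}; once those are in place, steps (1), (4) and the optimization over $\ell$ are routine, and the exponent $2\wedge\ha$ together with the $\ha=2$ log-correction drops out of the arithmetic.
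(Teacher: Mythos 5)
Your proposal correctly names the general Giacomin--Toninelli flavour and the two auxiliary ingredients (FKG/monotonicity from log-convexity of $\hK$, and a tilt-versus-shift comparison in the spirit of Caravenna--den Hollander), but the quantitative mechanism you propose is not the one that produces the exponent $2\wedge\ha$, and as written it does not yield the theorem. The paper's argument runs the localization strategy \emph{at} $h=h_c(\gb)$, where $F=0$, and targets blocks on which the disorder renewal is atypically \emph{rich} in points, i.e.\ blocks that look typical for the tilted law $\hP_{m,\gt}$ with $\gt>0$; the probabilistic cost of such a block is $\exp\{-h(\hP_{m,\gt}\mid\hP)(1+o(1))\}$, exponential in the block length with rate the relative entropy $h_\infty(\gt)=\gt\hat F'(\gt)-\hat F(\gt)\sim c\,\gt^{2\wedge\ha}(1+|\log\gt|\ind_{\{\ha=2\}})$ (computed from the homogeneous pinning free energy of $\htau$, Proposition~\ref{pr:asympt.hom.free.energy}). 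The rare-stretch bound then gives $F(\gb,h_c;\gt)\le(1+\ga)\hE_\gt(\htau_1)h_\infty(\gt)$, and Lemma~\ref{lem:comp.tilt.shift} converts the tilt into a shift, $F(\gb,h_c+c\gb\gt)\le F(\gb,h_c;\gt)$, so setting $\gt\asymp h-h_c$ delivers the exponent. Your strategy instead targets blocks where $\htau$ is \emph{poor} (a gap of length $\asymp\ell$), whose cost is only polynomial, $\asymp\ha\log\ell$ in the exponent; a per-site cost that vanishes as $\ell\to\infty$ cannot be balanced against a per-site free-energy gain, and indeed your own display in step (4), $F\le\ell^{-1}(c\ha\log\ell+c\ell\delta+\log(1/K(\ell)))$, optimizes to something of order $\delta\log(1/\delta)$, which is \emph{weaker} than the trivial convexity bound $F(\gb,h)\le h-h_c(\gb)$ (the contact fraction is at most $1$); the claimed conclusion $F\le C\delta^{2\wedge\ha}$ does not drop out of that arithmetic.

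There is also a structural problem with step (4): restricting $\tau$ to visit block endpoints and invoking super-additivity of $\log Z$ produces \emph{lower} bounds on the free energy, so one cannot bound $f=F(\gb,h)>0$ from above by exhibiting unfavourable blocks at $h>h_c$. The upper bound must be extracted, as in the paper, by writing the lower-bound strategy at the critical point (where the left-hand side is $0$) with the gain term being the free energy in the \emph{tilted} environment, and only afterwards trading the tilt for a shift in $h$ via the FKG-based Lemma~\ref{lem:comp.tilt.shift} (this is also where delocalized "poor" blocks play no role: on them the local gain is nonpositive, so they give no information). In short, you have the right named tools but are missing the key computation -- the relative entropy rate of tilting the renewal disorder, which is exactly where $2\wedge\ha$ and the logarithm at $\ha=2$ come from -- and the replacement mechanism you propose (polynomially rare gaps plus concatenation at $h>h_c$) fails both in the direction of the inequality and in the resulting exponent.
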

The exponent $2 \wedge \ha$ in the theorem above is not expected to be optimal, but in view of Proposition~\ref{ACExp}, this already tells us that disorder is relevant (in the sense that $\nu_q > \nu_a$) if $\ha > 2$ and $\ga > 1/2$, or if $\ha \in (1,2)$ and $\ha > 1/\ga$. This result extends the smoothing inequality obtained by Giacomin and Toninelli~\cite{GiT2006} in the i.i.d.~case.\\

We also prove the following result on disorder irrelevance:
\begin{theorem} \label{thm:irrelevance}
If $\ha > 2$ and $\ga < 1/2$ then disorder is irrelevant for $\gb$ small enough, meaning that $h_c(\gb) = h_c^a(\gb)$ and
\begin{equation}
\lim_{h \searrow h_c^a(\gb)} \frac{\log F(\gb,h)}{\log (h - h_c^a(\gb))} = \frac{1}{\ga}.
\end{equation}
\end{theorem}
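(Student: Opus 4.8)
The plan is to prove disorder irrelevance via second moment estimates on the quenched partition function, in the spirit of the standard approach for i.i.d.\ disorder, but carefully controlling the extra contribution coming from the correlations of $\hat\gd$. Since $F(\gb,h)\le F^a(\gb,h)$ always, it suffices to prove the matching lower bound $F(\gb,h)\ge F^a(\gb,h)(1-o(1))$ near the annealed critical point, which will simultaneously give $h_c(\gb)=h_c^a(\gb)$ and, combined with Proposition~\ref{ACExp} (which gives $\nu_a=1/\ga$ for small $\gb$ since $\ha>2>1$ forces $\ga_\ef=\ga$ and $\gb_0=0$ by Remark~\ref{rmk:signbeta0}), the identification of the quenched critical exponent as $1/\ga$. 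The standard device is to work with the partition function $\bar Z_{N,\gb,h}$ of the system restricted to a block of size $N=N(h)\to\infty$ as $h\searrow h_c^a(\gb)$, normalized by its annealed counterpart; one shows that $\hat\e[(\bar Z_N/\hat\e\bar Z_N)^2]$ stays bounded as $h\searrow h_c^a(\gb)$ along a suitable scaling $N\sim (h-h_c^a(\gb))^{-1}$, and then a Paley--Zygmund / concatenation argument upgrades this into the free energy lower bound.

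The second moment computation is where the structure of the model enters. Writing the normalized partition function as an expectation over a pair of independent copies $\tau^{(1)},\tau^{(2)}$ of the $S$-renewal, one finds that $\hat\e[(\bar Z_N/\hat\e\bar Z_N)^2]$ equals $\bE^{\otimes 2}$ of a product over the sites in $(\tau^{(1)}\cap\tau^{(2)})\cap\{1,\dots,N\}$ of a factor measuring the correlation $\hat\e[e^{\gb\hat\gd_k}\cdot e^{\gb\hat\gd_k}]/(\hat\e e^{\gb\hat\gd_k})^2$ — but crucially this factor is itself indexed by pairs of points of $\tau^{(1)}\cap\tau^{(2)}$ and couples them through the $\htau$-correlations. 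The key point is that for $\ha>2$ the correlations of $\hat\gd$ are summable (by \eqref{eq:speed_corr}), so after expanding, the dominant contribution is the ``diagonal'' one, behaving like the i.i.d.\ case with an effective $\gb^2$-type interaction strength on $\tau^{(1)}\cap\tau^{(2)}$, while the off-diagonal corrections are controlled by $\sum_{j}|\cov_{\hP_s}(\hat\gd_0,\hat\gd_j)|<\infty$. Since $\ga<1/2$, two independent copies of $\tau$ intersect in a set that is itself a renewal with loop exponent $2\ga<1$ — so $\tau^{(1)}\cap\tau^{(2)}$ is ``small'' (transient-like on the relevant scale, with $\bP^{\otimes 2}(k\in\tau^{(1)}\cap\tau^{(2)})\sim c\,k^{2\ga-1}$ summable-in-a-weighted-sense) and the multiplicative perturbation it induces is $L^2$-bounded for $\gb$ small, uniformly in $N$. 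One should also replace $\hP$ by its stationary version $\hP_s$ (legitimate by Proposition~\ref{pr:exist.que.free.energy}) to make the correlation estimates clean and translation-invariant.

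The main obstacle, and the step requiring the most care, is precisely handling the non-i.i.d.\ cross terms in the second moment: unlike the classical setting, one cannot simply factorize the disorder expectation site by site, because $e^{\gb\hat\gd_{k}}$ and $e^{\gb\hat\gd_{\ell}}$ are correlated for all $k,\ell$, not just a bounded range. The right way forward is to use the renewal representation of $\htau$ to write $\hat\e$ over the pair of disorder sites as a sum over the (possibly empty) $\htau$-renewal stretches connecting them, and to bound the resulting series using the decay $\hP(n\in\htau)-1/\hmu=O(n^{1-\ha})$ from \eqref{eq:ren.conv.est} together with $\ha>2$; the summability of these corrections is exactly what fails for $\ha\in(1,2]$ and is the reason the theorem is stated only for $\ha>2$. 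Once the second moment is shown bounded, the remaining arguments — the coarse-graining over blocks of length $N$, the use of a change-of-measure or direct concatenation to convert $L^2$-boundedness into $F(\gb,h)\ge (1-o(1))F^a(\gb,h)$, and the matching of exponents via Proposition~\ref{ACExp} — are routine adaptations of the i.i.d.\ pinning arguments (cf.~the references \cite{AZ09,DGLT09,Gi2011}) and I would only sketch them.
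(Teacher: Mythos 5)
Your proposal takes essentially the same route as the paper: the core is a second-moment bound on the (modified) partition function at the annealed critical point, obtained by decoupling the $\htau$-correlations via the renewal convergence estimate (summable precisely because $\ha>2$) and reducing to a homogeneous pinning problem for the intersection of two copies of the underlying renewal, which stays subcritical for small $\gb$ since $\tau\cap\tau'$ is transient when $\ga<1/2$, after which the conclusion follows the standard i.i.d.\ machinery (Theorem~4.5 of \cite{Gi2011}). The paper's Lemma~\ref{lem:second_moment} implements exactly this plan—through the gap-wise decoupling inequality \eqref{eq:UBhatU}, a stochastic-domination/H\"older step, and a Fourier-analytic continuity argument in $\gb$ that your sketch leaves implicit—so the strategy is the same, with only cosmetic differences (finite-volume second moment with Paley--Zygmund versus a bound uniform in $n$ at $h_c^a(\gb)$).
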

To the best of our knowledge, such a result on disorder irrelevance (in both critical points and exponents) has not yet been proven for other instances of correlated disorder, e.g.~Gaussian disorder with summable correlations.\\

When $\hmu$ is infinite, the issue of critical point shift is settled thanks to Proposition~\ref{pr:ann.crit.curve} and Proposition~\ref{pr:exist.que.free.energy}, which tells us that $h_c(\gb) = 0$ for all $\gb \geq 0$. Thus we get that $h_c^a(\gb) = h_c(\gb)$ when $\gb \le - \log p(0)$ and $h_c^a(\gb) < h_c(\gb)$ when $\gb > - \log p(0)$.
The next proposition gives a condition under which $h_c^a(\gb) < h_c(\gb)$ for large $\gb$ when $\hmu$ is finite.
\begin{proposition} \label{LargeBetaRelevance}
If $\hmu < \infty$, then a sufficient condition under which $h_c^a(\gb) < h_c(\gb)$ for large enough values of $\gb$ is 
\begin{equation} \label{RelAtInfty}
- \log \bP \times \hP(\tau_1 \in \htau) > - \frac{1}{\bP \times \hP(\tau_1 \in \htau)} \sum_{n=1}^\infty \hP(n \in \htau) K(n) \log K(n).
\end{equation}
If we assume that $K$ is of the form $K_\ga(n) = c_\ga n^{-(1 + \ga)}$ for all $\ga > 0$ and $n \in \bbN$ where $c_\ga = 1 / \zeta(1 + \ga)$, then \eqref{RelAtInfty} is satisfied if $\ga$ is large enough.
\end{proposition}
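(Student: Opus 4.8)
The plan is to compare the quenched and annealed free energies near the largest $\beta$, exploiting that for very large $\beta$ the system should concentrate on configurations in which every contact of $\tau$ lies in $\hat\tau$, i.e.\ on the intersection renewal $\tilde\tau$. The natural strategy is to produce, for large $\beta$, a lower bound on the quenched free energy $F(\beta,h)$ at $h=0$ (or at the annealed critical value) that still lies strictly above the annealed free energy there, or equivalently to show that the quenched critical point $h_c(\beta)$ stays bounded away from $h_c^a(\beta)$. Concretely, I would first recall from Proposition~\ref{pr:ann.crit.curve} that when $\hat\mu<\infty$ one has $h_c^a(\beta) = \cI^{-1}\!\big(1/(1-e^{-\beta})\big)$, and by strict convexity and continuity of $\cI$, as $\beta\to\infty$ this satisfies $h_c^a(\beta)\to \cI^{-1}(1)=-\infty$; more precisely one should extract the rate, namely $e^{h_c^a(\beta)} \sim (\text{const})\, e^{-\beta}$ coming from the behaviour of $\cI(h)$ as $h\to-\infty$, where $\cI(h) = 1 + \sum_{k\ge1} e^{hk}\,\bP\times\hat\bP(\tau_k\in\hat\tau) = 1 + e^{h}\,\bP\times\hat\bP(\tau_1\in\hat\tau) + O(e^{2h})$, so $h_c^a(\beta) = -\beta - \log\bP\times\hat\bP(\tau_1\in\hat\tau) + o(1)$.

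Next I would bound the quenched free energy from below at the point $h = h_c^a(\beta)$ by a change-of-measure / restriction argument. Restricting the partition function $Z_{n,\beta,h}$ to trajectories of $\tau$ that only touch $0$ inside $\hat\tau$ (so that all the disorder rewards $\beta\hat\delta_k$ are collected), one gets, $\hat\bP$-a.s.,
\begin{equation}
\frac1n\log Z_{n,\beta,h_c^a(\beta)} \;\ge\; (h_c^a(\beta)+\beta)\,\frac{|\tilde\tau\cap[1,n]|}{n} + \frac1n\log \tilde\bP(\tilde\tau_1<\infty)^{\#\text{gaps}} \cdots,
\end{equation}
and after taking $n\to\infty$ and using the renewal theorem for $\tilde\tau$ together with Varadhan-type / large-deviation control of the cost of the gaps, one arrives at a variational lower bound of the schematic form
\begin{equation}
F(\beta,h_c^a(\beta)) \;\ge\; \sup_{\text{renewal laws }\tilde q \ll \tilde K}\Big\{(h_c^a(\beta)+\beta)\,\tilde m(\tilde q)^{-1} - \tilde m(\tilde q)^{-1}\,\mathrm{Ent}(\tilde q\,|\,\tilde K)\Big\},
\end{equation}
where $\tilde m(\tilde q)$ is the mean of $\tilde q$ and $\mathrm{Ent}$ the relative entropy. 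Substituting the known asymptotics $h_c^a(\beta)+\beta \to -\log\bP\times\hat\bP(\tau_1\in\hat\tau)$ and choosing the trial law $\tilde q = \tilde K$ itself (mean possibly infinite; if so, truncate), this lower bound is strictly positive precisely when
\begin{equation}
-\log\bP\times\hat\bP(\tau_1\in\hat\tau) \;>\; \mathrm{Ent}\text{-type quantity} \;=\; -\frac{1}{\bP\times\hat\bP(\tau_1\in\hat\tau)}\sum_{n\ge1}\hat\bP(n\in\hat\tau)K(n)\log K(n),
\end{equation}
which is exactly condition~\eqref{RelAtInfty}. Since $F^a(\beta,h_c^a(\beta))=0$ by definition of $h_c^a$, positivity of $F(\beta,\cdot)$ at that point forces $h_c(\beta)>h_c^a(\beta)$.

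For the second assertion I would specialise $K=K_\alpha$ with $c_\alpha = 1/\zeta(1+\alpha)$ and track both sides of~\eqref{RelAtInfty} as $\alpha\to\infty$. As $\alpha\to\infty$, $K_\alpha$ concentrates on $n=1$, so $\bP\times\hat\bP(\tau_1\in\hat\tau) = \sum_n \hat\bP(n\in\hat\tau)K_\alpha(n) \to \hat\bP(1\in\hat\tau) = \hat K(1) > 0$, hence the left-hand side of~\eqref{RelAtInfty} tends to $-\log\hat K(1)$, a finite positive constant. Meanwhile $-\sum_n \hat\bP(n\in\hat\tau)K_\alpha(n)\log K_\alpha(n)$: the $n=1$ term contributes $-\hat K(1)\log K_\alpha(1) \to 0$ since $K_\alpha(1)=c_\alpha \to 1/\zeta(1)\cdot(\dots)$... — more carefully, $K_\alpha(1) = 1/\zeta(1+\alpha)\to 1$, so $-\log K_\alpha(1)\to 0$; and for $n\ge2$, $K_\alpha(n)\le c_\alpha 2^{-(1+\alpha)}\to 0$ exponentially while $-\log K_\alpha(n) = O(\alpha)$, so the whole $n\ge2$ sum vanishes. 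Thus the right-hand side of~\eqref{RelAtInfty} tends to $0$, and the strict inequality holds for $\alpha$ large.

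The main obstacle I anticipate is making the lower bound on $F(\beta,\cdot)$ rigorous with the right constant: one needs a coarse-graining or super-additivity argument ensuring that the cost of forcing all $\tau$-contacts into $\hat\tau$ is asymptotically captured by the entropy term above, uniformly in the (diverging) parameter $\beta$, and one must handle the case $\tilde\mu=\infty$ (where the naive trial law has infinite mean) by a truncation that still keeps the strict inequality. This is essentially a quenched large-deviation estimate for the intersection renewal $\tilde\tau$ embedded in $\hat\tau$; the $\hat\bP$-a.s.\ control of $|\tilde\tau\cap[1,n]|/n$ and of the gap statistics follows from the renewal theorem applied under $\hat\bP_s$, but uniformity in $\beta$ of the sub-leading corrections requires some care.
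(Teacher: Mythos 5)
Your proposed route for the main implication contains a fatal logical reversal. You aim to prove a \emph{lower} bound showing $F(\gb,h_c^a(\gb))>0$ and then conclude $h_c(\gb)>h_c^a(\gb)$. This cannot work for two reasons. First, by Jensen's inequality $F(\gb,h)\le F^a(\gb,h)$ for all $h$, and $F^a(\gb,h_c^a(\gb))=0$, so the quenched free energy at the annealed critical point is \emph{identically zero}; no restriction or variational argument can make it strictly positive, and likewise your opening goal of a quenched bound ``strictly above the annealed free energy'' is impossible. Second, even if positivity at some point $h$ could be established, it would imply $h_c(\gb)\le h$, i.e.\ it pushes the quenched critical point \emph{down}; combined with the general inequality $h_c^a\le h_c$ it would yield $h_c(\gb)=h_c^a(\gb)$, the opposite of the desired shift. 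To prove a critical point shift one must show that the quenched free energy \emph{vanishes} at some $h$ strictly above $h_c^a(\gb)$, which requires an \emph{upper} bound on the (typical size of the) quenched partition function. The paper does exactly this via the fractional moment method: for $\gga<1$ close to $1$ it bounds $Z_{n,\gb,h_c^a+h}^{\gga}$ by the partition function of a modified renewal $K^{(\gga)}_\eta$, shows that $\hat\bE(Z_{n,\gb,h_c^a+h}^\gga)$ grows subexponentially when $\gb\gga+\log p_\gga(\eta(\gga,h))\le 0$, and reduces this to the derivative condition $\gb+\partial_\gga a(1,0)>0$; the asymptotics $e^{h_c^a(\gb)+\gb}\to 1/\bP\times\hP(\tau_1\in\htau)$ (which you did identify correctly) then turn that condition into \eqref{RelAtInfty}. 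Your heuristic identification of the right-hand side of \eqref{RelAtInfty} as an entropy-like quantity is suggestive, but as stated it is attached to an argument whose conclusion points the wrong way, so there is no way to repair it without changing the strategy to an upper-bound (fractional moment or change-of-measure) scheme.

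The second assertion (the case $K=K_\ga$ with $\ga$ large) is handled correctly in your proposal: $\bP\times\hP(\tau_1\in\htau)\to\hK(1)\in(0,1)$ so the left-hand side of \eqref{RelAtInfty} tends to $-\log\hK(1)>0$, while the sum on the right-hand side tends to $0$ because $K_\ga(1)=1/\zeta(1+\ga)\to1$ and the $n\ge2$ terms are exponentially small against the $O(\ga)$ growth of $-\log K_\ga(n)$. This matches the computation in the paper, but it is of course contingent on first establishing the sufficiency of \eqref{RelAtInfty}, which your main argument does not deliver.
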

Finally, our results on the issue of disorder relevance are summed up in Figure~\ref{fig.sum}.

\subsection{Discussion}

We collect here remarks about our results.\\

\noindent {\bf 1.} 
Note that when $\gb_0 > 0$, that is, when $\bE \hE(|\tau \cap \htau|) < \infty$, then for small $\gb$, the annealed critical exponent is the same as in the case when the renewal $\htau$ is absent. The reason behind this is that the reward $\gb$ given at each intersection point in $\tau \cap \hat\tau$ is too weak for $\htau$ to contribute to the free energy.\\

\noindent {\bf 2.} 
According to the Weinrib-Halperin criterion~\cite{WH83}, which aims to generalize the Harris criterion, disorder should be relevant if $\nu < \frac{2}{\xi \wedge 1}$ (at least for small disorder) and irrelevant if $\nu > \frac{2}{\xi \wedge 1}$, where $\nu$ is the critical exponent of the pure (homogeneous) system and $\xi$ is the correlation exponent of the environment. The application of this criterion to pinning models was introduced and discussed in~\cite{B13}. In our case, $\nu = (1/\ga) \vee 1$, $\xi = \ha - 1$ (assuming that $\ha > 1$) and the Harris criterion should not be changed if $\xi >1$, i.e.~$\ha > 2$, which is confirmed by Theorem~\ref{thm:smoothing} and Theorem~\ref{thm:irrelevance}. If $\ha \in (1,2)$, the criterion predicts that disorder is relevant (resp.~irrelevant) if $\ga > \frac{\ha - 1}{2}$ (resp.~$\ga < \frac{\ha - 1}{2}$). However, there is no clear evidence that this criterion gives the right prediction out of the Gaussian regime and it has actually been disproved in several examples~\cite{B13, B14}.\\

\noindent {\bf 3.} 
The recent work of Caravenna, Sun and Zygouras~\cite{CSZ16+} has opened a new perspective on the issue of disorder relevance. Their work examines conditions under which we may find a weak-coupling limit of quenched partition functions, with randomness surviving in the limit. More precisely, they determine conditions under which there exist sequences of parameters in the Hamiltonian (the coupling constants $h_n, \gb_n$ in our case) that converge to zero as the size of the system goes to infinity and such that the properly rescaled quenched partition function converges in distribution to a random limit, which is obtained in the form of a Wiener chaos expansion. In several instances, including the one of the pinning model in i.i.d.~environment, it was shown that these conditions coincide with those of disorder relevance. Applying this approach to our model leads to the following conjecture:

\begin{conjecture} \label{ConjRelevance}
Disorder is relevant for all $\gb>0$ (in the sense of critical point shift)~if
\begin{equation} \label{eq:conj1}
\ga > 1 - \frac{1}{\ha \wedge 2},
\end{equation}
in which case
\beq \label{eq:conj2}
\limsup_{\gb \to 0} \frac{\log (h_c(\gb) - h_c^a(\gb))}{\log \gb} = \frac{(\ga\wedge 1)(\ha\wedge 2)}{1-(\ha\wedge 2)(1-(\ga\wedge 1))}.
\eeq
\end{conjecture}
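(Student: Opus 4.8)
I would prove Conjecture~\ref{ConjRelevance} by pinning down the order of the critical point shift $\gD_c(\gb):=h_c(\gb)-h_c^a(\gb)$ as $\gb\searrow0$ from both sides: a coarse-grained \emph{second moment} estimate showing that the quenched system is still localized at $h_c^a(\gb)+\gb^{\gr-\gep}$, hence $\gD_c(\gb)\le\gb^{\gr-\gep}$, and a \emph{fractional moment} estimate with a change of measure showing that it is delocalized at $h_c^a(\gb)+\gb^{\gr+\gep}$, hence $\gD_c(\gb)\ge\gb^{\gr+\gep}$, where $\gr$ is the exponent on the right-hand side of~\eqref{eq:conj2}. Since $\log\gb<0$, the two bounds give $\lim_{\gb\to0}\log\gD_c(\gb)/\log\gb=\gr$, which is even stronger than the stated $\limsup$. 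Relevance for \emph{all} $\gb>0$ (not only $\gb$ small) would then follow by combining this with the large-$\gb$ estimate of Proposition~\ref{LargeBetaRelevance} and the concavity of $\gb\mapsto h_c^a(\gb)$, but the analytic heart is the $\gb\searrow0$ asymptotics. Throughout one works at $h=h_c^a(\gb)+\gD$ and uses Proposition~\ref{ACExp} to read off the annealed correlation length $\ell_a\asymp\gD^{-\nu_a}$, $\nu_a=(1/\ga_{\ef})\vee1$.

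\textbf{Chaos expansion and second moment.} Using $e^{\gb\hat\gd_k}=1+(e^\gb-1)\hat\gd_k$ and expanding the product over contacts,
\[
Z_{n,\gb,h}=\sum_{A}(e^\gb-1)^{|A|}\,\ind_{\{A\subseteq\htau\}}\,\bE\big[e^{h|\tau\cap[1,n]|}\,\ind_{\{A\subseteq\tau\}}\,\gd_n\big],
\]
the sum over finite $A\subseteq[1,n]$; the annealed partition function is obtained by replacing $\ind_{\{A\subseteq\htau\}}$ with $\hP(A\subseteq\htau)$, so that $Z_{n,\gb,h}-\hE Z_{n,\gb,h}$ is a genuine polynomial chaos in the variables $(\hat\gd_k)$. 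Squaring and taking $\hE$ yields a double sum over pairs $(A,B)$ carrying the connected correlation $\hP(A\cup B\subseteq\htau)-\hP(A\subseteq\htau)\hP(B\subseteq\htau)$ on the environment side and a two-replica renewal kernel on the $\tau$-side. Estimating this sum with the renewal convergence estimate~\eqref{eq:ren.conv.est}, the Renewal Theorem, and the tail exponent of $\tilde\tau=\tau\cap\htau$ (Proposition~\ref{RMassFunctionProp}), one should find that $\hE[(Z_N-\hE Z_N)^2]\le C(\hE Z_N)^2$ holds up to a length $N=N(\gb)$, and that the constraint $N\ge\ell_a\asymp\gD^{-\nu_a}$ is met exactly when $\gD\gtrsim\gb^{\gr}$; a Paley--Zygmund argument on blocks of length $N$, as in~\cite{GiT2006,CadH13}, then gives $F(\gb,h_c^a(\gb)+\gD)>0$ for $\gD=\gb^{\gr-\gep}$, whence $\gD_c(\gb)\le\gb^{\gr-\gep}$. (As a consistency check, for $\ha\to\infty$ one recovers $\gr=\frac{2(\ga\wedge1)}{2(\ga\wedge1)-1}$, matching the i.i.d.\ result.)

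\textbf{Change of measure and fractional moments.} For the matching lower bound fix $h=h_c^a(\gb)+\gD$ with $\gD=\gb^{\gr+\gep}$, partition $\{1,\dots,n\}$ into blocks of size $N=N(\gb)$, and bound $\hE[(Z_{n,\gb,h})^{\gga}]$, $\gga\in(0,1)$, by the standard sub-additive fractional-moment inequality together with the renewal decomposition of $Z$, reducing to a single-block estimate under a tilted environment law. The tilt must \emph{reduce the number of returns of} $\htau$ inside a block --- for instance reweighting by $\exp(-\gl\,|\htau\cap\text{block}|)$ or down-weighting the first few inter-arrivals of $\htau$ --- so as to destroy the annealed intersection gain at a relative-entropy cost that is $o(1)$ per block on the scale $N(\gb)$. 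One then needs a decoupling inequality to factorize the block contributions: conditioning on the last point of $\htau$ before a block and the first one after it renders the block functionals conditionally independent, and the renewal structure of $\htau$ makes the ensuing error terms tractable. Summing over the block decomposition of $\tau$ gives $\hE[(Z_{n,\gb,h})^{\gga}]\to0$, hence $F(\gb,h)=0$ and $\gD_c(\gb)\ge\gb^{\gr+\gep}$.

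\textbf{The main obstacle.} The crux --- and the reason this remains a conjecture --- is the regime $\ha\in(1,2)$ of non-summable correlations, where neither the pair sum in the second moment nor the change of measure localizes to a single scale. In Step~3 the change of measure must act on $\htau$ simultaneously at all scales up to $N(\gb)$, and one must show that the total relative entropy it pays still fits the $o(1)$-per-block budget dictated by $\gr$; this requires a delicate, presumably scale-dependent tilt together with a decoupling inequality sharp enough to survive non-summability, of the schematic form $\hE[FG]\le\hE[F]\,\hE[G]+c\,(\mathrm{dist})^{1-\ha}\,\|F\|\,\|G\|$ with well-chosen norms. Symmetrically, in Step~2 the pair sum over $(A,B)$ must be split according to the mutual scale of $A$ and $B$ and each regime estimated separately, again feeling the $(n-m)^{1-\ha}$ decay of~\eqref{eq:speed_corr}. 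A further, purely technical nuisance is reproducing the logarithmic corrections at the thresholds $\ha=2$ and $1-\ha=2\ga$, which demands second-order renewal asymptotics refining~\eqref{eq:ren.conv.est}. Absent these inputs, only the one-sided bound of Theorem~\ref{thm:smoothing} (relevance when $\ha>1/\ga$) is available, which is precisely why the conjectured range $\ga>1-1/(\ha\wedge2)$ and the exponent~\eqref{eq:conj2} are left open.
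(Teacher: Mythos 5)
What you are attempting is precisely the statement the paper leaves as Conjecture~\ref{ConjRelevance}: the authors give no proof, only heuristics (the polynomial-chaos criterion of Caravenna--Sun--Zygouras, the $\ha$-stable limit of the centered sums $\sum_{k\le n}(\hat\gd_k-1/\hmu)$ suggesting a L\'evy rather than white noise in the weak-coupling limit, and the analogy with the $\gamma$-stable i.i.d.\ case of Lacoin--Sohier), and they defer the proof to future work. Your proposal follows the standard relevance toolbox (coarse-grained second moment for the upper bound on the shift, fractional moments plus change of measure for the lower bound), which is indeed the natural programme and is consistent with the paper's heuristic; but it is a programme, not a proof, and you say as much in your final paragraph. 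The two quantitative claims on which everything rests are asserted rather than derived: (i) that the second moment of the centered chaos stays comparable to the square of the mean exactly up to the scale $N(\gb)\asymp\gD^{-\nu_a}$ if and only if $\gD\gtrsim\gb^{\gr}$, and (ii) that a tilt of $\htau$ can destroy the annealed gain at $\gD=\gb^{\gr+\gep}$ at an entropy cost that is $o(1)$ per block. For $\ha\in(1,2)$ the covariances \eqref{eq:speed_corr} are non-summable, so neither claim reduces to the i.i.d.\ computation: the pair sum over $(A,B)$ genuinely couples all scales, and the decoupling inequality you invoke, of the form $\hE[FG]\le\hE[F]\hE[G]+c\,(\mathrm{dist})^{1-\ha}\|F\|\,\|G\|$, is not available in the paper --- the only decoupling proved there, \eqref{eq:UBhatU}, is a multiplicative upper bound on $\hP(I\cup J\subseteq\htau)$ tailored to the summable case $\ha>2$ and to second moments, not to the conditional factorization needed in a fractional-moment scheme. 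In addition, your claimed conclusion that the full limit of $\log\gD_c(\gb)/\log\gb$ exists is stronger than the conjectured $\limsup$ in \eqref{eq:conj2} and is nowhere justified.

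Two further steps are also gaps. First, ``relevance for all $\gb>0$'' does not follow from the small-$\gb$ asymptotics together with Proposition~\ref{LargeBetaRelevance} and concavity of $h_c^a$: that proposition requires $\hmu<\infty$ and the sufficient condition \eqref{RelAtInfty} (verified only for a specific family of $K$ with large $\ga$), and concavity of $\gb\mapsto h_c^a(\gb)$ gives no monotonicity of the difference $h_c(\gb)-h_c^a(\gb)$, so intermediate values of $\gb$ are not covered by any argument you give. Second, the threshold cases ($\ha=2$, and the logarithmic corrections you mention) would need second-order refinements of \eqref{eq:ren.conv.est} that neither you nor the paper supply. In short, the outline is reasonable and matches the paper's own expectations, but the analytic core --- the scale-by-scale second-moment estimate and a change of measure with controlled entropy under non-summable correlations --- is missing, so the conjecture remains unproved by this proposal.
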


This problem will be attacked in a future work. The reason for the term $1/\ha$ in place of the usual $1/2$ when $\ha\in (1,2)$ is that the partial sums of our disorder sequence is in the domain of attraction of an $\ha$-stable law. More specifically:
\begin{equation}
\frac{1}{n^{1/\ha}} \sum_{k=1}^n (\hat \gd_k - 1/\hmu) \longrightarrow \ha\text{-stable law},\quad \text{as\ } n\to\infty,\quad \ha\in(1,2).
\end{equation}
Therefore we expect that white noise is replaced by a Levy noise in the weak-coupling limit of the quenched partition function. Note that \eqref{eq:conj1} and \eqref{eq:conj2} coincide with the case of i.i.d. disorder when $\ha>2$, that is the summable correlation scenario. Finally, another reason to believe in this conjecture is that the chaos expansion approach gives the right prediction for a pinning model in i.i.d.~$\gamma$-stable environment ($1 < \gamma < 2$), which has been studied recently by Lacoin and Sohier~\cite{LS16+}. There, it has been proved that disorder is relevant (resp.~irrelevant) if $\ga > 1 - 1/\gamma$ (resp.~$\ga < 1 - 1/\gamma$), which is to be compared to our conjecture.\\

\noindent {\bf 4.} 
The picture that has emerged for the moment regarding disorder relevance for this model can be summed up in the following exponent diagram, see Figure~\ref{fig.sum}.
\begin{figure}[htbp]
\begin{center}
\begin{tikzpicture}[xscale =3, yscale=2]
\fill[fill=blue!20]
(1/2,2)
-- plot [domain=1/2:1] (\x, {1/\x})
-- (0,1) -- (1,0) -- (2,0) -- (2,3) --(1/2,3) --(1/2,2);
\fill[fill=yellow!20] (0,2) rectangle (1/2,3);
\fill[fill=yellow!20] (0,1)--(1,0)--(0,0);
\draw [->](0,0) node[below]{$0$}--(1/2,0) node[below]{$1/2$}--(1,0) node[below]{$1$}--(2,0) node[below right]{$\ga$};
\draw [->](0,0) --(0,1) node[below left]{$1$}--(0,2) node[below left]{$2$}--(0,3) node[above left]{$\ha$};
\draw (0,1)--(1,0);
\draw [dotted ] plot [domain=0:1/2] (\x, 1+2*\x);
\draw [dashed] plot [domain=0:1/2] (\x, {1/(1-\x)});
\draw [densely dashed] (0,2) -- (2,2);
\draw [densely dashed] (0,1) -- (2,1);
\end{tikzpicture}
\end{center}
\caption{\label{fig.sum}
Disorder relevance/irrelevance in the exponent diagram.
}
\end{figure}
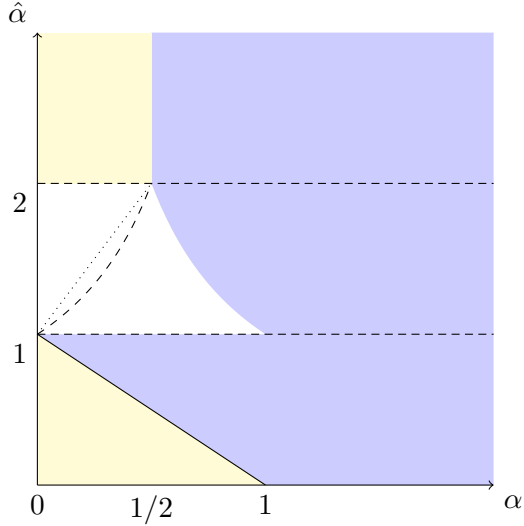
\begin{itemize}
\item The blue area is where we have proven relevance for small $\gb$.
\item In the region $\ha < 1$ we have relevance in the blue area because the quenched critical curve is trivially 0 while the annealed is strictly negative.
\item In the blue region with $\ha > 1$, we have relevance due to smoothing, see Theorem~\ref{thm:smoothing}. We do not know yet whether the critical points differ but we conjecture that they do so (see Conjecture \ref{ConjRelevance} above).
\item In the yellow triangle we have irrelevance because there also the annealed critical curve is 0 for small $\gb$ and the critical exponents agree. 
\item In the yellow part with $\ha > 2$, we have irrelevance due to Theorem~\ref{thm:irrelevance}.
\item The dashed line marks the border of relevance/irrelevance according to the chaos expansion heuristics when $\ga\in(0,1)$ and $\ha\in(1,2)$, see Conjecture~\ref{ConjRelevance}.
\item The dotted line marks the border of relevance/irrelevance according to the Harris-Weinrib-Helperin criterion when $\ga\in(0,1)$ and $\ha\in(1,2)$, see Item 2 in Discussion. 
\end{itemize} 

\noindent {\bf 5.}
Finally, let us mention the recent work of Alexander and Berger~\cite{A17_1} who also consider a pinning model with disorder built out of a renewal sequence. Even if they may look similar, the model studied in~\cite{A17_1} and the one considered in this paper are actually different in spirit. Indeed, in~\cite{A17_1} all the interactions up to the $n$-th renewal point of the disorder renewal (denoted here by $\htau_n$) are taken into account, and the only parameter is the inverse temperature $\gb$ (no pinning strength $h$). As a consequence, the results obtained therein are also quite different as for instance, the critical line deciding disorder relevance is at $\ga + \hat \ga = 1$. However, we do not exclude that the two models are related. For instance, the line $\ga + \hat \ga = 1$ also appears in Remark~\ref{rmk:signbeta0} above and, incidentally, in Proposition~\ref{ACExp} (see also Figure \ref{fig.sum}). 

\section{Proof of the annealed results}

The main idea is that the annealed model can be viewed as a homogeneous pinning model for the intersection renewal $\tilde\tau = \tau \cap \htau$ after the law of $\tau$ has been tilted.

\subsection{Existence of the free energy}
\begin{proof}[Proof of Proposition \ref{pr:exist.ann.free.energy}]
We use standard techniques, see the proof of Lemma~3.5 in~\cite{Gi2011}.  Let us first introduce the {\it fully-pinned}  annealed partition function
\begin{equation} \label{eq:fullpinned}
Z^{a, c}_{n,\gb,h} := \hat \bE \bE \left( e^{\sum_{k=1}^n (h + \gb \hat \gd_k) \gd_k} \gd_n \hat \gd_n \right).
\end{equation}
We shall write $Z^{a, c}_{n,\gb,h}(A)$ when the expectation above is restricted to the event $A$ and use the same convention for other versions of the partition function appearing in the proof.
By super-additivity, the sequence $\{n^{-1} \log Z_{n,h,\gb}^{a,c}\}_{n \in \bbN}$ converges to the limit
\begin{equation} \label{eq:facsup}
F^{a, c}(\gb,h) := \sup_{n \in \bbN} \{n^{-1} \log Z^{a,c}_{n,\gb,h}\}.
\end{equation}
From the bounds
\begin{equation}
e^{|h + \gb| n} \geq Z^{a,c}_{n,\gb,h} \geq Z^{a,c}_{n,\gb,h}(\tau_1 = \htau_1 = n) = e^{h + \gb} K(n) \hK(n),
\end{equation}
we get that $F^{a,c}(\gb,h) \in [0,\infty)$. Let us now prove that $F^a(\gb,h)$ exists and that $F^a(\gb,h) = F^{a,c}(\gb,h)$. Let $R = \sup \{k \leq n \colon k\in \tilde\tau\}$ be the last point in $\tilde\tau$ before $n$. Then,
\begin{equation}
\begin{aligned}
Z^{a}_{n,\gb,h} &= \hat \bE \bE \left( e^{\sum_{k=1}^n (h + \gb \hat \gd_k) \gd_k} \gd_n \right)\\
&= \sum_{r=0}^n \hat \bE \bE \left( e^{\sum_{k=1}^r (h + \gb \hat \gd_k) \gd_k} e^{\sum_{k=r+1}^n (h + \gb \hat \gd_k) \gd_k} \ind_{\{R = r\}} \gd_n \right)\\
&= \sum_{r=0}^n \hat \bE \bE \left( e^{\sum_{k=1}^r (h + \gb \hat \gd_k) \gd_k} \gd_{r} \hat \gd_r \right) \hat \bE \bE \left( e^{\sum_{k=r+1}^n (h + \gb \hat \gd_k) \gd_k} \ind_{\{\tilde{\tau} \cap [r+1,n] = \emptyset\}} \gd_n \mid r \in \tilde\tau \right)\\
&= \sum_{r=0}^n Z_{r,\gb,h}^{a,c} \hat \bE \bE \left( e^{h \sum_{k=r+1}^n \gd_k}\ind_{\{\tilde{\tau} \cap [r+1,n] = \emptyset\}} \gd_n \mid r \in \tilde\tau \right).
\end{aligned}
\end{equation}
From~\eqref{eq:facsup} we know that $Z_{r,\gb,h}^{a,c} \leq e^{rF^{a,c}(\gb,h)}$. Moreover,
\begin{equation}
\begin{aligned}
&\hat \bE \bE \left( e^{h \sum_{k=r+1}^n \gd_k} \ind_{\{\tau \cap \htau \cap [r+1,n] = \emptyset\}} \gd_n \mid r \in \tilde\tau \right) \leq \hat \bE \bE \left( e^{h \sum_{k=r+1}^n \gd_k} \gd_n \mid r \in \tilde\tau \right)\\
&= Z^{a,c}_{n-r,0,h} \frac{1}{\hP(n - r \in \htau)} \leq e^{(n - r) F^{a,c}(0,h)} \frac{1}{\hP(n - r \in \htau)}\\
&\leq e^{(n - r)F^{a,c}(\gb,h)} \frac{1}{\hP(n - r \in \htau)}.
\end{aligned}
\end{equation}
Thus,
\begin{equation} \label{eq:zna_ub}
Z^a_{n,\gb,h} \leq \sum_{r=0}^n e^{r F^{a,c}(\gb,h)} e^{(n - r) F^{a,c}(\gb,h)} \frac{1}{\hP(n - r \in \htau)} = e^{n F^{a,c}(\gb,h)} \sum_{r=0}^n \frac{1}{\hP(r \in \htau)}.
\end{equation}
By Proposition~\ref{RMassFunctionProp}, the last sum increases polynomially in $n$. Combining this inequality with $Z^{a,c}_{n,\gb,h} \leq Z^a_{n,\gb,h}$, we get that the free energy $F^a(\gb,h)$ exists and equals $F^{a,c}(\gb,h)$.

We now prove the second part of the result, namely that the limit for $\hP_s$ is the same as for $\hP$. Suppose that $\hmu < \infty$, and define $Z_n^{a,s} = \bE \hE_s \Big( \exp \Big\{ \sum_{k=1}^n (h + \gb \hat \gd_k) \gd_k \Big\} \gd_n \Big)$ (we temporarily remove $\gb$ and $h$, for conciseness). By restricting the expectation to the event $\{0 \in \htau\}$, we get on the one hand $Z_n^{a,s} \geq Z_n^a / \hmu$. On the other hand, by decomposing on the value of $\tilde\tau_1$, we obtain
\begin{equation} \label{eq:znas_ub}
\begin{aligned}
Z_n^{a,s} &= Z_n^{a,s}(\tilde\tau_1 > n) + \sum_{k=1}^n Z_n^{a,s}(\tilde\tau_1 = k)\\
&\leq e^{n (F(0,h) + o(1))} + \sum_{k=1}^n e^{k F(0,h)} Z^a_{n - k}.
\end{aligned}
\end{equation}
To go from the first to second line, we used the Markov property at $\tilde\tau_1$ and the fact that $\gd_k \hat \gd_k = 0$ for all $k < \tilde\tau_1$. Combining \eqref{eq:znas_ub}, \eqref{eq:zna_ub}, and using that $F(0,h) \leq F^a(\gb,h)$, we get that
\begin{equation}
\limsup_{n \to \infty} \frac{1}{n} \log Z_n^{a,s} \leq F^a(\gb,h),
\end{equation}
hence the result.
\end{proof} 

\subsection{Annealed critical curve}

This subsection is organized as follows: we start with Lemma~\ref{lem:acc} below, which we use to prove Proposition~\ref{pr:ann.crit.curve}. From Lemma~\ref{lem:asympt.0} we get Proposition~\ref{pr:ann.cc.scaling2} and Lemma~\ref{lem:asympt.I}, which in turn yields Proposition~\ref{pr:ann.cc.scaling}.
\begin{lemma} \label{lem:acc}
Let $h \leq 0$. Then, $F^a(\gb,h) = 0$ if and only if $\gb \leq - \log p(h)$.
\end{lemma}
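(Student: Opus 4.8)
The plan is to identify the annealed free energy with a homogeneous pinning free energy for the intersection renewal $\tilde\tau$, and then to read off the localization/delocalization criterion from the classical theory of homogeneous pinning. Recall from the proof of Proposition~\ref{pr:exist.ann.free.energy} that $F^a(\gb,h) = F^{a,c}(\gb,h)$, where the fully-pinned partition function $Z^{a,c}_{n,\gb,h}$ only sees the points of $\tilde\tau = \tau\cap\htau$, since the only term involving $\gb$ is $\gb\hat\gd_k\gd_k = \gb\tilde\gd_k$. So the first step is to rewrite
\begin{equation} \label{eq:acc.rewrite}
Z^{a,c}_{n,\gb,h} = \hat\bE\bE\Big( e^{h\sum_{k=1}^n \gd_k + \gb\sum_{k=1}^n\tilde\gd_k}\gd_n\hat\gd_n \Big),
\end{equation}
and to split the sum $\sum_{k=1}^n\gd_k$ over blocks of $\tau$ delimited by consecutive points of $\tilde\tau$. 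Between two consecutive intersection points, the $\tau$-excursion away from $\htau$ contributes only the factor $e^h$ per $\tau$-renewal, and one can perform the sum over such excursions explicitly: summing $e^{h}$ over all configurations of $\tau$ between $r$ and the next point of $\tilde\tau$ (with the additional constraint that $\htau$ avoids those intermediate points) produces exactly the transient inter-arrival kernel for $\tilde\tau$ under $\bP_h\times\hP$. This is where the definition~\eqref{hTransRenewal} of $K_h$ and the quantity $p(h) = \bP_h\times\hP(\tilde\tau_1<\infty) = 1-\cI(h)^{-1}$ enter.

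Concretely, I would like to show that
\begin{equation} \label{eq:acc.identity}
Z^{a,c}_{n,\gb,h} = \sum_{j\ge 0} e^{\gb j}\, \tbbP_h(\tilde\tau_j = n),
\end{equation}
or more precisely $Z^{a,c}_{n,\gb,h} = e^h\,\tbbE_h\big(e^{\gb|\tilde\tau\cap(0,n]|}\ind_{\{n\in\tilde\tau\}}\big)$ up to harmless boundary factors, where $\tbbP_h$ is the law of the (possibly transient) renewal with inter-arrival kernel $\tilde K_h$ satisfying $\sum_n \tilde K_h(n) = p(h)$ and $\tilde K_h(\infty) = 1-p(h)$; the key point is that summing $\gb\tilde\gd_k$ over the excursion structure turns each intersection point into a multiplicative weight $e^\gb$. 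Once \eqref{eq:acc.identity} is in hand, $F^{a,c}(\gb,h)$ is precisely the free energy of a homogeneous pinning model built on the renewal $\tilde\tau$ with pinning parameter $\gb$ and defective inter-arrival law of total mass $p(h)$. Raising the pinning reward by $\log$ makes the total mass $e^\gb p(h)$: the homogeneous pinning model localizes ($F>0$) iff this effective mass exceeds $1$, i.e. iff $e^\gb p(h)>1$, and delocalizes ($F=0$) iff $e^\gb p(h)\le 1$; equivalently $F^a(\gb,h)=0 \iff \gb \le -\log p(h)$. I would cite the standard homogeneous pinning criterion (Theorem~2.1 in~\cite{Gi2007}) for this last dichotomy, checking that the defective/transient case causes no trouble (if $p(h)<1$ one may need $\gb$ large enough even to reach criticality, which is consistent with the statement).

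The main obstacle is the bookkeeping in establishing \eqref{eq:acc.identity}: one must carefully sum over $\tau$-excursions that lie strictly between consecutive points of $\tilde\tau$ while simultaneously imposing that $\htau$ avoids exactly those intermediate $\tau$-points, and check that this sum reproduces $\tilde K_h(\cdot)$ rather than some other kernel. The cleanest route is to use the representation $\cI(h) = \sum_{k\ge 0} e^{hk}\bP\times\hP(\tau_k\in\htau)$ from~\eqref{eq:cI}: the first-return decomposition of the sequence $\bP\times\hP(\tau_k\in\htau)$ with respect to $\tilde\tau$ gives a renewal-equation identity whose generating function is $\cI(h) = 1/(1-\sum_n\tilde K_h(n)) = 1/(1-p(h))$, which is~\eqref{eq:rel_p_I}; then one recognizes $Z^{a,c}_{n,\gb,h}$ as the $n$-th term of the renewal mass function for the kernel $e^\gb\tilde K_h$, up to the $e^h$ boundary factor. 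A secondary subtlety is handling the endpoint $n$ (it must be both in $\tau$ and in $\htau$, which is automatic since we work with $Z^{a,c}$) and confirming, via super/sub-additivity already proved, that the exponential growth rate of the right-hand side of \eqref{eq:acc.identity} is indeed governed solely by whether $e^\gb p(h)\gtrless 1$, with polynomial corrections (from $\tilde K_h$ being regularly varying) being irrelevant to positivity of $F$.
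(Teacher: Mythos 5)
Your proposal is correct and follows essentially the same route as the paper: the paper's proof likewise rewrites the fully-pinned partition function as $Z^{a,c}_{n,\gb,h} = \hE\,\bE_h\big(e^{\gb\sum_{k=1}^n\tilde\gd_k}\tilde\gd_n\big)$, i.e.\ a homogeneous pinning model for the renewal $\tilde\tau$ under $\bP_h\times\hP$, and reads off the critical value $\gb=-\log p(h)$ from the standard criterion for (possibly transient) underlying renewals. The only difference is cosmetic: the paper obtains this identity in one step by absorbing the factor $e^{h\sum_{k=1}^n\gd_k}$ into the tilted law $\bP_h$ of \eqref{hTransRenewal}, so the excursion/generating-function bookkeeping you flag as the main obstacle is not actually needed.
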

\begin{proof}[Proof of Lemma \ref{lem:acc}]
Let $h \leq 0$. We know from the proof of Proposition~\ref{pr:exist.ann.free.energy} that $F^a(\gb,h)$ is the limiting free energy of the fully-pinned partition function in \eqref{eq:fullpinned}, which we may rewrite, using \eqref{hTransRenewal} and \eqref{eq:deftildedelta}, as
\begin{equation} \label{eq:equ.znac}
Z^{a,c}_{n,\gb,h} = \hE \bE_h \Big( e^{\gb \sum_{k=1}^n \tilde \gd_k} \tilde \gd_n \Big),
\end{equation}
which is the partition function of the homogenous pinning model with reward $\gb$ for the renewal $\tilde\tau$ under the law $\bP_h \times \hP$. It now follows from a standard fact about homogeneous pinning models (see \cite[Section 1.2.2, Equation (1.26)]{Gi2007}) that its critical point (as $\gb$ varies and $h$ is fixed) is at
\begin{equation}
\gb = - \log \bP_h \hP(\tilde \tau_1 < \infty) = - \log p(h), 
\end{equation}
see \eqref{eq:rel_p_I}.
\end{proof}

\begin{proof}[Proof of Proposition \ref{pr:ann.crit.curve}]
We distinguish two cases. Suppose first that $\gb \leq \gb_0 = - \log p(0)$. On the one hand, if $h \leq 0$, then $F^a(\gb,h) \leq F^a(\gb,0) = 0$, by Lemma~\ref{lem:acc}. On the other hand, if $h > 0$ then $F^a(\gb,h) \geq F^a(0,h) > 0$, by \cite[Theorem 2.7]{Gi2007}, since $\tau$ is recurrent. Thus, $h_c^a(\gb) = 0$ in this case. Suppose now that $\gb > \gb_0$. If $h \geq 0$, $F^a(\gb,h) > 0$ for the same reason as above, so we restrict to $h < 0$. Then, by Lemma~\ref{lem:acc}, $F^a(\gb,h) = 0$ if and only if $\gb \leq - \log p(h)$, that is $\cI(h) \geq (1 - e^{- \gb})^{-1}$ (recall \eqref{eq:rel_p_I} and the lines above). Since $\gb < \gb_0$, we have $\cI(0)=(1 - e^{- \gb_0})^{-1}> (1 - e^{- \gb})^{-1} > 1$, that is,  $(1 - e^{- \gb})^{-1}$ is in the range of $\cI$, so we get
\begin{equation}
\gb \leq - \log p(h) \quad \text{if and only if } \quad h \geq \cI^{-1} \Big( \frac{1}{1 - e^{- \gb}} \Big),
\end{equation}
hence the result.
\end{proof}

\color{black}

\smallskip

\begin{proof}[Proof of Proposition~\ref{pr:ann.cc.scaling}] Let us first consider the case $\ha > 1$ and $\ha \neq 1 + \ga \wedge 1$, and write
\begin{equation}\label{eq:asymphca}
h_c^a(\gb) = - \frac{\gb}{\hmu} (1 + \gep_\gb) \qquad \mbox{ with } \lim_{\gb\downarrow0} \gep_\gb = 0.
\end{equation}
Note that $\ha > 1$ implies $\gb_0 = 0$, by \eqref{eq:logp0}. Therefore, from Proposition~\ref{pr:ann.crit.curve}, we get on the one hand
\begin{equation} \label{eq:as.hca1}
\cI(h_c^a(\gb)) = \frac{1}{1 - e^{- \gb}} = \frac{1}{\gb} \Big( 1 + \frac{1}{2} \gb + o(\gb) \Big), \qquad \gb \downarrow 0,
\end{equation}
and on the other hand, from Lemma~\ref{lem:asympt.I} below and \eqref{eq:asymphca}, as $\gb \downarrow 0$,
\begin{equation} \label{eq:as.hca2}
\begin{aligned}
\cI(h_c^a(\gb)) &= \frac{1}{\hmu} \frac{1}{1 - e^{h_c^a(\gb)}} + c \, (- h_c^a(\gb))^{\gann - 2} [1 + o(1)]\\
&= \frac{1}{\gb} \Big( 1 - \gep_\gb [1 + o(1)] + \frac{1}{2 \hmu} \gb [1 + o(1)] + c \, \gb^{\gann - 1} [1 + o(1)] \Big),
\end{aligned}
\end{equation}
where $c$ is a positive constant that may change from line to line and $\gann$ is defined as in \eqref{eq:defgann}.  The result follows by identifying the right-hand sides in \eqref{eq:as.hca1} and \eqref{eq:as.hca2}. Indeed we get $\gep_\gb \sim c \gb^{\gann - 1}$ when $\gann<2$ and $\gep_\gb \sim (c-\tfrac12 (1-1/\hmu)) \gb$ when $\gann =2$. Note that in the latter case the constant $c-\tfrac12 (1-1/\hmu)$ is indeed positive by Lemma~\ref{lem:asympt.I}.
If $\ha = 1 + \ga \wedge 1$, the same method leads us to $\gep_\gb \sim c \gb |\log \gb|$, which proves our claim.
Finally, the case $\ha < 1$ is easier. Indeed, \eqref{eq:as.hca1} gives $\cI(h_c^a(\gb)) \sim 1/\gb$ and the result follows from Lemma~\ref{lem:asympt.I}. 
\end{proof}

\begin{proof}[Proof of Proposition~\ref{pr:ann.cc.scaling2}] 
Since $\gb_0 > 0$ we have by Proposition~\ref{pr:ann.crit.curve}
\beq\label{eq:ann.cc2a}
 \cI(h_c^a(\gb_0)) - \cI(h_c^a(\gb_0 + \gep)) \sim \frac{e^{-\gb_0}}{(1-e^{-\gb_0})^2} \gep, \qquad \gep\to0.
\eeq
Moreover, for some positive constant $c>0$,
\begin{equation}\label{eq:ann.cc2b}
\cI(0) - \cI(h) = \sum_{k\in\bbN_0} (1- e^{h k}) \bE \hP(\tau_k \in \htau) \sim c |h|^{1\wedge \frac{1-\ha-\ga}{\ga}}(1+|\log |h||\ind_{\{1-\hat \ga=2\ga\}})
\end{equation}
as  $h\to 0^-$.
The equivalence above follows from Lemma  \ref{seriesAsymptotics}. 
We get the final result by noting that $h_c^a(\gb_0)=0$ and combining \eqref{eq:ann.cc2a} and \eqref{eq:ann.cc2b}.
\end{proof}

\begin{lemma} \label{lem:asympt.0}
(i) If $\ha > 1$ then, as $k \to \infty$,
\begin{equation}
\bE \hP (\tau_k \in \htau) - \frac{1}{\hmu} \sim \frac{c_{\hK}}{\hmu^2 \ha (\ha - 1)} \left\{
\begin{array}{ll}
\mu^{1 - \ha} k^{1 - \ha} &\quad \mbox{ if } \ga > 1,\\
\bE(X_\ga^{1 - \ha}) k^{\frac{1 - \ha}{\ga}} & \mbox{ if } \ga \in (0,1),
\end{array}
\right.
\end{equation}
where $X_\ga$ is an $\ga$-stable random variable totally skewed to the right, with scale parameter $\gs > 0$ depending on the distribution of $\tau_1$ and shift parameter $0$ (see relation \eqref{eq:stable.cf} in the appendix for a reminder of these terms).\\
(ii) If $\ha \in (0,1)$,
\begin{equation}
\bE \hP (\tau_k \in \htau) \sim \frac{C_{\ha}}{c_{\hK}} \left\{
\begin{array}{ll}
\mu^{\ha - 1} k^{\ha - 1} &\quad \mbox{ if } \ga > 1,\\
\bE(X_\ga^{\ha - 1}) k^{\frac{\ha - 1}{\ga}} & \mbox{ if } \ga \in (0,1),
\end{array}
\right.
\end{equation}
where $C_{\ha}$ is as in Proposition~\ref{RMassFunctionProp}.
\end{lemma}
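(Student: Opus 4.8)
The plan is to write $\bE\hP(\tau_k\in\htau)=\bE[\hP(\tau_k\in\htau)]$, the expectation being over $\tau$, and to substitute the known large-$n$ behaviour of the renewal mass function $n\mapsto\hP(n\in\htau)$. In case (i), $\ha>1$, the renewal convergence estimate~\eqref{eq:ren.conv.est} gives $\hP(n\in\htau)-1/\hmu = C_g\,n^{1-\ha}(1+o(1))$ with $C_g = c_{\hK}/(\ha(\ha-1)\hmu^2)>0$, so that, setting $g(n):=\hP(n\in\htau)-1/\hmu$ (a bounded sequence), we have $\bE\hP(\tau_k\in\htau)-1/\hmu = \bE[g(\tau_k)]$. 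In case (ii), $\ha\in(0,1)$, Proposition~\ref{RMassFunctionProp} gives instead $\hP(n\in\htau) = (C_\ha/c_{\hK})\,n^{\ha-1}(1+o(1))$, and with $f(n):=\hP(n\in\htau)$ we have $\bE\hP(\tau_k\in\htau)=\bE[f(\tau_k)]$. In both cases the relevant exponent of $\tau_k$ is negative: $1-\ha<0$ in (i), $\ha-1<0$ in (ii).

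First I would reduce the problem to the asymptotics of $\bE[\tau_k^{\gt}]$ for a negative exponent $\gt$ (namely $\gt=1-\ha$ or $\gt=\ha-1$). Writing $g(n)=C_g n^{1-\ha}(1+\gd(n))$ with $\gd(n)\to0$ and $\gd$ bounded, and using the deterministic bound $\tau_k\geq k$ (each renewal increment is $\geq1$), one gets: given $\gep>0$, for $k$ large enough $|\gd(\tau_k)|\leq\gep$ almost surely, whence $|\bE[g(\tau_k)]-C_g\bE[\tau_k^{1-\ha}]|\leq\gep\,C_g\,\bE[\tau_k^{1-\ha}]$. Hence $\bE[g(\tau_k)]\sim C_g\,\bE[\tau_k^{1-\ha}]$; the same argument gives $\bE[f(\tau_k)]\sim(C_\ha/c_{\hK})\,\bE[\tau_k^{\ha-1}]$. (Note $\bE[\tau_k^{\gt}]\in(0,k^{\gt}]$ since $\tau_k\geq k$ and $\gt<0$, so everything is finite.)

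Next I would compute $\bE[\tau_k^{\gt}]$, $\gt<0$, and here the dichotomy $\ga>1$ versus $\ga\in(0,1)$ enters. If $\ga>1$ then $\mu<\infty$, $\tau_k/(\mu k)\to1$ almost surely by the strong law, and $\tau_k\geq k$ forces $\tau_k/(\mu k)\geq1/\mu$, so $(\tau_k/(\mu k))^{\gt}$ is bounded by $\mu^{-\gt}$ and dominated convergence yields $\bE[\tau_k^{\gt}]\sim(\mu k)^{\gt}$. If $\ga\in(0,1)$ then $\mu=\infty$ and $\tau_k/k^{1/\ga}$ converges in distribution to $X_\ga$ (the $\ga$-stable law of the lemma, whose scale $\gs$ encodes the constant $c_K$); the claim then reads $\bE[(\tau_k/k^{1/\ga})^{\gt}]\to\bE[X_\ga^{\gt}]$. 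Writing $\gt=-p$ with $p>0$, the cleanest derivation uses the Laplace-transform representation of negative moments,
\[
\bE[\tau_k^{-p}]=\frac{1}{\gG(p)}\int_0^\infty t^{p-1}\,\bE[e^{-t\tau_1}]^{k}\,\dd t=\frac{1}{\gG(p)\,k^{p/\ga}}\int_0^\infty v^{p-1}\,\bE[e^{-v\tau_1/k^{1/\ga}}]^{k}\,\dd v,
\]
together with $1-\bE[e^{-t\tau_1}]\sim c_0 t^\ga$ as $t\downarrow0$ (Karamata--Tauberian, using $\bP(\tau_1>n)\sim(c_K/\ga)n^{-\ga}$), which makes the integrand converge pointwise to $v^{p-1}e^{-c_0 v^\ga}=v^{p-1}\bE[e^{-vX_\ga}]$; with a suitable domination one obtains $\bE[\tau_k^{-p}]\sim k^{-p/\ga}\bE[X_\ga^{-p}]$. (Equivalently one may argue via uniform integrability of $\{(\tau_k/k^{1/\ga})^{\gt}\}_k$.) Plugging these into the previous step yields precisely the four stated equivalents, the constants matching after the obvious identifications.

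The main obstacle is the domination in the last display (equivalently, the uniform integrability near $0$ of $(\tau_k/k^{1/\ga})^{\gt}$ when $\ga\in(0,1)$). Both amount to a uniform lower-tail estimate of the form
\[
\bP\big(\tau_k\leq\gl k^{1/\ga}\big)\leq\exp\!\big(-c\,\gl^{-\ga/(1-\ga)}\big),\qquad k\geq1,\ \gl\in(0,1],
\]
obtained by optimizing the exponential Chebyshev bound $\bP(\tau_k\leq x)\leq e^{tx}\bE[e^{-t\tau_1}]^{k}$ over $t$: one uses $\bE[e^{-t\tau_1}]^{k}\leq e^{-c'kt^\ga}$ for $t$ in a fixed neighbourhood of $0$ and $\bE[e^{-t\tau_1}]^{k}\leq\theta^{k}$ with $\theta=\bE[e^{-t_0\tau_1}]<1$ for $t\geq t_0$, the only delicate point being the bookkeeping of these two regimes as $k\to\infty$ (note that $\{\tau_k\leq\gl k^{1/\ga}\}$ is empty unless $k\geq\gl^{-\ga/(1-\ga)}$, since $\tau_k\geq k$). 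The same estimate shows, via Fatou, that $\bE[X_\ga^{-p}]<\infty$ for all $p>0$ (the left tail of the right-skewed stable law decaying super-exponentially), a fact in any case recorded in the appendix around~\eqref{eq:stable.cf}. The remaining ingredients — the reduction exploiting $\tau_k\geq k$ and the dominated-convergence argument for $\ga>1$ — are routine.
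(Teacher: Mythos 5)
Your proposal is correct and follows essentially the same route as the paper: the paper likewise substitutes the renewal convergence estimate \eqref{eq:ren.conv.est} (resp.\ Proposition~\ref{RMassFunctionProp}) into $\bE\hP(\tau_k\in\htau)$, uses $\tau_k\geq k$ to pass the asymptotic equivalence through the expectation, and then invokes Lemma~\ref{lem:conv.power.tauk}, whose proof is bounded convergence for $\ga>1$ and, for $\ga\in(0,1)$, the stable limit of $\tau_k/k^{1/\ga}$ combined with uniform integrability obtained from exactly the Chernoff/Tauberian lower-tail bound you describe. Your Laplace-representation computation of $\bE[\tau_k^{-p}]$ is only a cosmetic variant of that uniform-integrability argument (as you note yourself), so the two proofs coincide in substance.
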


\begin{proof}[Proof of Lemma~\ref{lem:asympt.0}]
(i) From the renewal convergence estimates in~\cite[Lemma~4]{Fr82}, for $\ha > 1$, we get
\begin{equation}
\hP(n \in \htau) - \frac{1}{\hmu} \sim \frac{c_{\hK}}{\hmu^2 \ha (\ha - 1)} n^{1 - \ha}, \quad n \to \infty,
\end{equation}
so we have $\bP$-a.s.,
\begin{equation}
\hP(\tau_k \in \htau) - \frac{1}{\hmu} \sim \frac{c_{\hK}}{\hmu^2 \ha (\ha - 1)} \tau_k^{1 - \ha}, \quad k \to \infty,
\end{equation}
Since $\tau_k \geq k$, we may take the expectation in the line above and write
\begin{equation}
\bE \hP(\tau_k \in \htau) - \frac{1}{\hmu} \sim \frac{c_{\hK}}{\hmu^2 \ha (\ha - 1)} \bE(\tau_k^{1 - \ha}), \quad k \to \infty,
\end{equation}
and we may conclude the proof with Lemma~\ref{lem:conv.power.tauk}.\\
(ii) From Proposition~\ref{RMassFunctionProp}, we have $\bP$-a.s, if $\ha \in (0,1)$,
\begin{equation}
\hP(\tau_k \in \htau) \sim \frac{C_{\ha}}{c_{\hK}} \tau_k^{\ha - 1},\quad k \to \infty,
\end{equation}
and with the same argument as in (i),
\begin{equation}
\bE \hP(\tau_k \in \htau) \sim \frac{C_{\ha}}{c_{\hK}} \bE(\tau_k^{\ha - 1}), \quad k \to \infty.
\end{equation}
We may conclude thanks to Lemma~\ref{lem:conv.power.tauk}.
\end{proof}

\begin{lemma} \label{lem:asympt.I}
Suppose that $\ga + \ha > 1$. As $h \uparrow 0^-$,
\begin{equation}
\cI(h) - \frac{1}{\hmu} \frac{1}{1 - e^h} \sim \left\{
\begin{array}{ll}
c|h|^{\frac{1-\ha}{\ga\wedge 1} - 1} & \mbox{if } \ha \in (1-\ga\wedge 1, 1),\\
c|h|^{\frac{\ha - 1}{\ga\wedge 1} - 1} & \mbox{if } \ha \in (1, \ga \wedge 1+1),\\
c|\log |h|| & \mbox{if } \ha = \ga \wedge 1+1,\\
c & \mbox{if } \ha > \ga \wedge 1+1,
\end{array}
\right.
\end{equation}
where $c$ is a positive constant (note that in the first case $\hmu = \infty$ and the left-hand side is simply $\cI(h)$). Moreover, in the case $ \ha > \ga \wedge 1+1$, the constant $c$ satisfies $c>(1-\hat \mu^{-1})/2$.

\end{lemma}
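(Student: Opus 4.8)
The plan is to read off the behaviour of the left-hand side from the representation $\cI(h)=\sum_{k\ge0}e^{hk}a_k$ with $a_k:=\bP\times\hP(\tau_k\in\htau)=\bE\hP(\tau_k\in\htau)$ given in~\eqref{eq:cI}. Using $\sum_{k\ge0}e^{hk}=(1-e^h)^{-1}$ one writes
\begin{equation}
\cI(h)-\frac1{\hmu}\,\frac1{1-e^h}=\sum_{k\ge0}e^{hk}b_k,\qquad b_k:=a_k-\frac1{\hmu},
\end{equation}
with the convention $1/\hmu=0$ when $\ha<1$, in which case the subtracted term is absent and $b_k=a_k$. By Lemma~\ref{lem:asympt.0} there is a constant $c_0>0$ with $b_k\sim c_0\,k^{-\xi}$ as $k\to\infty$, where
\begin{equation}
\xi:=\frac{|\ha-1|}{\ga\wedge 1}\,;
\end{equation}
in particular $b_k>0$ for large $k$ and $|b_k|\le Ck^{-\xi}$ for all $k\ge1$. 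Under the hypothesis $\ga+\ha>1$ the four regimes in the statement correspond exactly to $\xi\in(0,1)$ (i.e.\ $\ha\in(1-\ga\wedge 1,1)$ or $\ha\in(1,\ga\wedge 1+1)$), to $\xi=1$ (i.e.\ $\ha=\ga\wedge 1+1$) and to $\xi>1$ (i.e.\ $\ha>\ga\wedge 1+1$), the value $\ha=1$ not being considered.

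I would then estimate the power series $\sum_{k\ge0}e^{hk}b_k$ by a routine Abelian argument: writing $b_k=c_0k^{-\xi}+d_k$ with $d_k=o(k^{-\xi})$, the leading piece satisfies, as $h\uparrow0$,
\begin{equation}
c_0\sum_{k\ge1}k^{-\xi}e^{hk}\sim
\begin{cases}
c_0\,\Gamma(1-\xi)\,|h|^{\xi-1} & \text{if }\xi\in(0,1),\\
c_0\,|\log |h|| & \text{if }\xi=1,\\
c_0\,\zeta(\xi) & \text{if }\xi>1,
\end{cases}
\end{equation}
which one obtains by comparing $\sum_{k\ge1}k^{-\xi}e^{-|h|k}$ with $\int_0^\infty x^{-\xi}e^{-|h|x}\,\dd x=\Gamma(1-\xi)|h|^{\xi-1}$ (for $\xi<1$), with $-\log(1-e^{-|h|})$ (for $\xi=1$) or with $\zeta(\xi)$ (for $\xi>1$); since $1-e^h\sim|h|$ this produces the exponents $\tfrac{1-\ha}{\ga\wedge 1}-1$, $\tfrac{\ha-1}{\ga\wedge 1}-1$ and the logarithm of the statement. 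For the error term, writing $|d_k|=\epsilon_kk^{-\xi}$ with $\epsilon_k\to0$ and splitting the sum at the first index past which $\epsilon_k<\delta$, one gets $\sum_{k\ge1}|d_k|e^{hk}\le C_\delta+\delta\sum_{k\ge1}k^{-\xi}e^{hk}$; letting $h\uparrow0$ and then $\delta\downarrow0$ shows that this remainder is of smaller order than the leading piece when $\xi\le1$ (the leading piece then diverges) and is an absolutely convergent, hence bounded, series when $\xi>1$. This settles the first three cases, with $c=c_0\Gamma(1-\xi)$ or $c=c_0$; and when $\xi>1$ it shows, by dominated convergence, that $\cI(h)-\frac1{\hmu}\frac1{1-e^h}\to c:=\sum_{k\ge0}b_k$.

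It remains to prove, in the case $\ha>\ga\wedge 1+1$, the lower bound $c>(1-\hmu^{-1})/2$. Here $c=\sum_{k\ge0}\big(\bE\hP(\tau_k\in\htau)-\hmu^{-1}\big)$, and the $k=0$ term equals $\hP(0\in\htau)-\hmu^{-1}=1-\hmu^{-1}$ since $\tau_0=0\in\htau$ almost surely. For the remaining terms I would use that $\hP(n\in\htau)\ge\hmu^{-1}$ for every $n\ge0$: indeed, the log-convexity of $\hat K$ built into~\eqref{ReturnTimeDistributions} makes the renewal mass function $n\mapsto\hP(n\in\htau)$ non-increasing (cf.\ Lemma~\ref{lem:mon.pro}) with limit $\hmu^{-1}$. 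Hence $b_k=\bE\hP(\tau_k\in\htau)-\hmu^{-1}\ge0$ for all $k$, so that $c\ge b_0=1-\hmu^{-1}>(1-\hmu^{-1})/2$. I expect the only points requiring any care to be (i) keeping track that the constant $c_0$ furnished by Lemma~\ref{lem:asympt.0} is strictly positive, which is what makes the stated equivalences valid and the limiting constant in the last case positive, and (ii) this last lower bound, which is the only place where the log-convexity assumption enters; everything else is standard bookkeeping with regularly varying sequences.
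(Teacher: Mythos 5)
Your treatment of the main asymptotics is essentially the paper's own proof: the same decomposition $\cI(h)-\frac{1}{\hmu}\frac{1}{1-e^h}=\sum_{k\ge0}e^{hk}\big(\bE\hP(\tau_k\in\htau)-\frac1\hmu\big)$, the input from Lemma~\ref{lem:asympt.0}, and an Abelian estimate for $\sum k^{-\xi}e^{hk}$ (the paper invokes Lemma~\ref{seriesAsymptotics}~(iii) instead of redoing it, and handles the regimes $\xi=1$, $\xi>1$ the same way). So the first three cases and the convergence of the series in the fourth case are fine.

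The gap is in the final lower bound $c>(1-\hmu^{-1})/2$. You reduce it to the claim that $\hP(n\in\htau)\ge\hmu^{-1}$ for all $n$, justified by saying that log-convexity of $\hK$ makes $n\mapsto\hP(n\in\htau)$ non-increasing ``cf.\ Lemma~\ref{lem:mon.pro}''. But Lemma~\ref{lem:mon.pro} proves something different: the FKG-type lattice condition \eqref{RFCondProbability} for the tilted pinning measures $\hP_{n,\gt}$; it says nothing about monotonicity of the renewal mass function, and nothing else in the paper does either. The statement ``log-convex (Kaluza/DFR) inter-arrival law $\Rightarrow$ non-increasing renewal sequence'' is a genuine renewal-theoretic fact, not a triviality, so as written this step is unsupported: you would have to prove it or give a precise reference, and note also that your argument uses the renewal structure only through $\bE\hP(\tau_k\in\htau)\ge\hmu^{-1}$, whereas the constant you need to beat is exactly calibrated to a different mechanism. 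The paper avoids this entirely: it observes that $\sigma:=\{k\in\bbN_0:\tau_k\in\htau\}$ is itself a renewal process on the index set, with $\bP\times\hP(k\in\sigma)\to\hmu^{-1}$, so its first point has mean $\hmu$ and some variance $v$, and then applies the classical identity (Feller, Ch.~XIII, Problem~19, with the sign/index fix mentioned in the paper) to get $c=\sum_{k\ge0}\big(\bP\times\hP(k\in\sigma)-\hmu^{-1}\big)=\frac{v^2+\hmu^2-\hmu}{2\hmu^2}$, whence $c-(1-\hmu^{-1})/2=v^2/(2\hmu^2)>0$. If you can substantiate the monotonicity claim (it would in fact give the stronger bound $c\ge 1-\hmu^{-1}$, using $\hmu>1$), your route works; but as it stands the citation does not carry the weight placed on it, and this is the one step of the lemma where the paper's argument is genuinely different from yours.
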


\begin{proof}[Proof of Lemma~\ref{lem:asympt.I}] 
Recall \eqref{eq:cI}. For $h < 0$, write
\begin{equation}
\cI(h) - \frac{1}{\hmu} \frac{1}{1 - e^h} =  \sum_{k\in\bbN_0} e^{h k} \bE \Big[ \hP(\tau_k \in \htau) - \frac{1}{\hmu} \Big].
\end{equation}
Then, in the first three cases, the result follows from Lemma~\ref{lem:asympt.0} and the standard Tauberian arguments recalled in Lemma~\ref{seriesAsymptotics} (iii).
In the fourth case, we have
$$c=\sum_{k\in\bbN_0} \bE \Big[ \hP(\tau_k \in \htau) - \frac{1}{\hmu} \Big].$$
Note that the set $\sigma:=\{k\in \bbN_0: \tau_k\in \htau\}$ defines a renewal process. Call $\gs_1$ its first positive point. This has mean $\hat \mu$ because $\bP\times \hP(k\in \sigma)\to 1/\hat \mu$ as $k\to\infty$. Let us call $v\in(0, \infty]$ its variance. We now use Problem 19 in Chapter XIII of \cite{Feller68}, which needs a fix : we let the reader check that in the equations (12.1) and (12.2) therein, $u_0$ should be removed and the sums should start at $n=0$.
According to this, we have
$$c=\sum_{k\in\bbN_0} \bE \Big[ \hP(k \in \gs) - \frac{1}{\hmu} \Big]=\frac{v^2+\hat \mu^2-\hat \mu}{2\hat \mu^2},$$
so that $c-(1-\hat \mu^{-1})/2=v^2/(2\hat\mu^2)>0$.
\end{proof}

\subsection{The annealed critical exponent}

Recall the definition of $\bP_h$ in \eqref{hTransRenewal}. When $\gb > \gb_0 = - \log p(0)$, it will be useful for the proof of Proposition~\ref{ACExp}, which follows, to introduce the inter-arrival law
\begin{equation} \label{tildeRenewal}
\tilde K_\gb(n) := e^\gb \bP_{h_c^a(\gb)} \times \hP((\tau \cap \htau)_1 = n), \qquad n \geq 1.
\end{equation}
We denote by $\tilde \bE_\gb$ the expectation with respect to this law (note that for $\gb=0$, it coincides with the law of $\tilde\tau$ under $\bP \times \hP$). It is a by-product of the proof of Lemma~\ref{lem:acc} that $h_c^a(\gb)$ is chosen such that the sequence $(\tilde K_\gb(n))_{n\in\N}$ sums up to 1, and thus defines a recurrent renewal.
\begin{remark} \label{rmk:freeenergyhomopol}
Note that if $F_\gb$ is the free energy of the homopolymer with inter-arrival law distribution $n \mapsto \bP_{h_c^a(\gb)} \times \hP((\tau \cap \htau)_1 = n)$, then the homopolymer with inter-arrival law distribution $\tilde K_\gb$ and pinning reward $h \geq 0$ has a free energy equal to $F_\gb(\gb+h)$.
\end{remark}
Let us start with a uniform estimate on the mass renewal function under $\bP_h \times \hP$, that will be used in the proof of Proposition~\ref{ACExp} below.
\begin{lemma} \label{productAsymptotics}
For any compact subset $J$ of $(- \infty,0)$ there are constants $0 < c_1 < c_2$ such that 
\begin{equation} \label{RFAsymptotics}
c_1 \leq \bP_h \times \hP(n \in \tau \cap \htau) \, n^{1 + \ga_\ef} \leq c_2,
\end{equation}
where $\ga_\ef = \ga + (1 - \ha)_+$, for all $h \in J$ and $n \geq 1$.
\end{lemma}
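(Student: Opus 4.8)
The plan is to relate $\bP_h\times\hP(n\in\tau\cap\htau)$ to the renewal mass function of the intersection renewal $\tilde\tau$ and then control the relevant inter-arrival tail uniformly in $h\in J$. First, recall from~\eqref{eq:cI} that $\bP_h(n\in\tau)=\sum_{k\ge0}e^{hk}\bP(\tau_k=n)$, so that
\[
\bP_h\times\hP(n\in\tau\cap\htau)=\bP_h(n\in\tau)\,\hP(n\in\htau)=\Big(\sum_{k\ge0}e^{hk}\bP(\tau_k=n)\Big)\hP(n\in\htau).
\]
Equivalently, the set $\tau\cap\htau$ under $\bP_h\times\hP$ is a (possibly transient) renewal process whose inter-arrival law is $\tilde K_{h}(n):=\bP_h\times\hP((\tau\cap\htau)_1=n)$ and whose mass function is exactly $\bP_h\times\hP(n\in\tau\cap\htau)$. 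The first step is therefore to get two-sided bounds, uniform in $h\in J$, on the tail $\overline{\tilde K}_h(n)=\sum_{m>n}\tilde K_h(m)$, or directly on $\tilde K_h(n)$ itself, of the form $\tilde K_h(n)\asymp n^{-(1+\ga_\ef)}$ for $h$ ranging over the compact $J$. Here one uses~\eqref{ReturnTimeDistributions}, i.e.\ $K(n)\sim c_K n^{-(1+\ga)}$ and $\hK(n)=c_{\hK}n^{-(1+\ha)}$, together with the renewal mass asymptotics of Proposition~\ref{RMassFunctionProp} (giving $\hP(n\in\htau)\asymp n^{-(1-\ha)}$ when $\ha<1$, and $\hP(n\in\htau)\to1/\hmu$ when $\ha>1$, and similarly $\bP_h(n\in\tau)\asymp n^{-(1-\ga)}$ for $\ga<1$ since $\bP_h$ is a transient renewal with heavy inter-arrival tail when $h<0$, and $\bP_h(n\in\tau)\asymp e^h$ up to constants when $\ga>1$). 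Combining the two gives precisely the exponent $\ga_\ef=\ga+(1-\ha)_+$, and the constants depend on $h$ only through $e^h$, hence are bounded above and below on the compact $J\subset(-\infty,0)$.

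Once the uniform bound $c_1'\le \tilde K_h(n)\,n^{1+\ga_\ef}\le c_2'$ (for $h\in J$, $n\ge1$) is in hand, the second step is to transfer it to the mass function $\bP_h\times\hP(n\in\tau\cap\htau)=\sum_{j\ge0}\tilde K_h^{*j}(n)$. For this I invoke the classical renewal mass-function asymptotics for renewals with regularly varying inter-arrival tails (Doney's theorem / the strong renewal theorem, as recalled in Proposition~\ref{RMassFunctionProp}): if the inter-arrival law has tail $\asymp n^{-\ga_\ef}$ (equivalently $\tilde K_h(n)\asymp n^{-(1+\ga_\ef)}$), then the mass function is $\asymp n^{-(1-\ga_\ef)}$ when $\ga_\ef\in(0,1)$, and is bounded between positive constants when $\ga_\ef>1$ (where the renewal has finite or infinite mean but in either case the Renewal Theorem / Garsia--Lamperti-type bounds apply). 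The point is that the constants in these renewal estimates can be taken uniform over a family of inter-arrival laws whose tails are uniformly comparable, which is exactly what the first step provides. One should be slightly careful to separate the borderline cases $\ga_\ef=1$ (which can only occur if $\ga\ge1$, $\ha=1$, excluded since $\ha>1$ in the first dichotomy, or $\ga<1$, $\ha=1-\ga$, also a boundary) but for the regime where the lemma is applied ($\gb>\gb_0$, which by Remark~\ref{rmk:signbeta0} forces $\ga+\ha<1$, so $\ga_\ef=\ga+1-\ha>\ga$ and $\ga_\ef<1$) one is safely in the regularly-varying-tail regime with exponent in $(0,1)$.

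The main obstacle I anticipate is the \emph{uniformity in $h$} of the renewal mass-function bounds, rather than any single pointwise estimate. Pointwise, $\bP_h\times\hP(n\in\tau\cap\htau)\asymp n^{-(1-\ga_\ef)}$ is standard; the work is to show the comparison constants $c_1,c_2$ can be chosen not to degenerate as $h$ ranges over the compact $J$. I would handle this by either (a) quoting a version of the strong renewal theorem with explicit constants depending only on the tail constants of the inter-arrival law and invoking the uniform tail bounds from step one, or (b) a direct argument: lower bound via $\bP_h\times\hP(n\in\tau\cap\htau)\ge\tilde K_h(n)\ge c_1' n^{-(1+\ga_\ef)}$ together with a first-return decomposition $\bP_h\times\hP(n\in\tau\cap\htau)=\sum_{m=1}^n \tilde K_h(m)\bP_h\times\hP(n-m\in\tau\cap\htau)$ plus an upper bound obtained by the usual ``one big jump'' splitting of the convolution powers, controlling $\sum_j \tilde K_h^{*j}(n)$ through $\sum_j j\,\|\tilde K_h\|^{j-1}\cdot\sup$-type estimates on the bulk and the explicit tail on the extremes; all the quantities appearing are monotone or continuous in $e^h\in(0,1)$ and hence uniformly controlled on $J$. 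Either route reduces the lemma to step one plus a black-box renewal estimate, with the compactness of $J$ doing the rest.
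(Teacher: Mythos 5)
The factorization you write in your first display is in fact the entire proof, but your treatment of the factor $\bP_h(n\in\tau)$ is incorrect. For $h\in J\subset(-\infty,0)$ the renewal $\bP_h$ is \emph{transient} ($K_h(\infty)=1-e^h>0$), and the mass function of a transient renewal with regularly varying inter-arrival law decays like the inter-arrival probability itself, i.e.\ $\bP_h(n\in\tau)\asymp K(n)\asymp n^{-(1+\ga)}$ (this is the appendix proposition $\bP(n\in\tau)\sim \bP(\tau_1=n)/\bP(\tau_1=\infty)^2$), \emph{not} $n^{-(1-\ga)}$ for $\ga<1$ nor a constant for $\ga>1$ as you assert: those are recurrent-renewal asymptotics, and Proposition~\ref{RMassFunctionProp} requires recurrence. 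With your claimed asymptotics the product $\bP_h(n\in\tau)\,\hP(n\in\htau)$ would be of order $n^{-(2-\ga-\ha)}$ when $\ga,\ha<1$, which is not $n^{-(1+\ga_\ef)}$, so your ``step one'' does not even yield the exponent you announce.

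The same confusion derails your second step: under $\bP_h\times\hP$ the intersection renewal $\tilde\tau$ is itself transient (this is precisely the effect of tilting with $h<0$), so the strong renewal theorem / Garsia--Lamperti asymptotics $\asymp n^{-(1-\ga_\ef)}$ you propose to invoke do not apply --- indeed they would contradict the bound $\asymp n^{-(1+\ga_\ef)}$ that the lemma states. Moreover the whole detour through the inter-arrival law $\tilde K_h$ and a renewal theorem is unnecessary: the quantity in the lemma \emph{is} the mass function of $\tilde\tau$, and by independence it factorizes as $\bP_h(n\in\tau)\,\hP(n\in\htau)$. The paper's proof is exactly this product: $\hP(n\in\htau)\asymp n^{-(1-\ha)_+}$ by Proposition~\ref{RMassFunctionProp}, while uniformly in $h\in J$ one has the lower bound $\bP_h(n\in\tau)\ge K_h(n)=e^hK(n)$ and the upper bound $\bP_h(n\in\tau)=\sum_{k=1}^n e^{hk}\bP(\tau_k=n)\le K(n)\sum_{k\ge1}e^{h_0k}k^c$, where $h_0=\sup J<0$ and $\bP(\tau_k=n)\le k^cK(n)$ is \cite[Lemma A.5]{Gi2007}; the compactness of $J$ away from $0$ is used only through $e^{h_0}<1$. (A further slip: the lemma is applied for every $\gb>\gb_0$, including when $\gb_0=0$, i.e.\ $\ga+\ha>1$, so you cannot restrict attention to the regime $\ga+\ha<1$.)
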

\begin{proof}[Proof of Lemma~\ref{productAsymptotics}] First, the probability in the display equals $\bP_h(n \in \tau) \hP(n \in \htau)$. As a lower bound for $\bP_h(n \in \tau)$, we get
\begin{equation}
\bP_h(n \in \tau) \geq K_h(n) = e^h c_K[1 + o(1)] \, n^{- (1 + \ga)},
\end{equation}
while for an upper bound we let $h_0 = \sup J < 0$ and use the fact that $\bP(\tau_k = n) \leq k^c K(n)$ for all $n,k \geq 1$ and some constant $c > 0$ (see~\cite[Lemma A.5]{Gi2007}) to get
\begin{equation}
\bP_h(n \in \tau) = \sum_{k=1}^n e^{h k} \bP(\tau_k = n) \leq K(n) \sum_{k=1}^n e^{h_0 k} k^c, 
\end{equation}
for all $h \in J$. It is crucial here that the compact set $J$ does not include $0$ so that $h_0 < 0$. Combining these estimates with Proposition~\ref{RMassFunctionProp}, we get the result.
\end{proof} 

\begin{proof}[Proof of Proposition~\ref{ACExp}] 
We consider the two possible cases for $\beta$:

\smallskip

\noindent \textbf{Case 1:} Assume $\gb > \gb_0$. Then $h_c^a(\gb) < 0$ by Proposition~\ref{pr:ann.crit.curve}.

\smallskip

\textsc{Lower bound:} We bound the partition function from below, as follows:
\begin{equation}
\begin{aligned}
Z^a_{n,\gb,h_c^a + \gep} &\geq \bE \hE \left(e^{(h_c^a + \gep) \sum_{k=1}^n \gd_k + \gb \sum_{k=1}^n \gd_k \hat \gd_k} \gd_n \hat \gd_n \right) = \bE_{h_c^a} \hE \left( e^{\gep \sum_{k=1}^n \gd_k + \gb \sum_{k=1}^n \gd_k \hat \gd_k} \gd_n \hat \gd_n \right)\\
&\geq \bE_{h_c^a} \hE \left( e^{(\gb + \gep) \sum_{k=1}^n \gd_k \hat \gd_k} \gd_n \hat \gd_n \right) = \tilde \bE_\gb \left( e^{\gep \sum_{k=1}^n \tilde \gd_k} \tilde \gd_n \right),
\end{aligned}
\end{equation}
where in the last equality we use \eqref{tildeRenewal}. By Lemma~\ref{productAsymptotics} applied to the set $J = \{h_c^a(\gb)\}$, we know that the mass renewal function $n \mapsto \bP_{h_c^a(\gb)} \times \hP(n \in \tilde\tau)$ satisfies \eqref{RFAsymptotics}. The lower bound then follows if one recalls Remark~\ref{rmk:freeenergyhomopol} and use Lemma~\ref{FreeEnergyU}, where the singleton $\{\gb\}$ and the renewal $n \mapsto \bP_{h_c^a(\gb)} \times \hP(\tau \cap \htau)_1 = n)$ play the role of the compact set $I$ and the renewal $K_\gga$ therein.

\smallskip

\textsc{Upper bound:} Pick $\gb_1 \in (\gb_0,\gb)$, and let $\gep > 0$ be small enough so that $h_c^a(\gb) + \gep < h_c^a(\gb_1) < 0$. By continuity of $h_c^a$, there is $\gb_\gep \in (\gb_1,\gb)$ so that $h_c^a(\gb) + \gep = h_c^a(\gb_\gep)$. Moreover, by the mean value theorem, there is $\xi_\gep \in (\gb_\gep,\gb)$ with $h_c^a(\gb) - h_c^a(\gb_\gep) = (h_c^a)'(\xi_\gep) (\gb - \gb_\gep)$. Thus, $\gb - \gb_\gep = c(\gb,\gep) \, \gep$ with $c(\gb,\gep) = - 1/(h_c^a)'(\xi_\gep)$, which converges to $c(\gb,0) = - 1/(h_c^a)'(\gb) > 0$ as $\gep \to 0^+$, by Proposition~\ref{pr:ann.crit.curve} and the regularity properties of $\cI$. Then, since $h_c^a(\gb) + \gep < 0$, (recall \eqref{eq:fullpinned})
\begin{equation}
\begin{aligned}
Z^{a,c}_{n,\gb,h_c^a + \gep} &= \bE \hE \left(e^{(h_c^a + \gep) \sum_{k=1}^n \gd_k + \gb \sum_{k=1}^n \gd_k \hat \gd_k} \gd_n \hat \gd_n \right) = \bE_{h_c^a + \gep} \hE \left( e^{\gb \sum_{k=1}^n \gd_k \hat \gd_k} \gd_n \hat \gd_n \right)\\
&= \bE_{h_c^a(\gb_\gep)} \hE \left( e^{\{\gb_\gep + c(\gb,\gep) \gep\} \sum_{k=1}^n \gd_k \hat \gd_k} \gd_n \hat \gd_n \right) = \tilde \bE_{\gb_\gep} \left( e^{c(\gb,\gep) \gep \sum_{k=1}^n \tilde \gd_k} \tilde \gd_n \right)\\
&\leq \tilde \bE_{\gb_\gep} \left( e^{2 c(\gb,0) \gep \sum_{k=1}^n \tilde \gd_k} \tilde \gd_n \right) \quad \text{for $\gep$ small enough,}
\end{aligned}
\end{equation}
where $\tilde \bE_{\gb_\gep}$ is the expectation with respect to the renewal defined in \eqref{tildeRenewal} above with $\gb_\gep$ in place of $\gb$. As in the lower bound part, the result follows by using Remark~\ref{rmk:freeenergyhomopol} and Lemma~\ref{FreeEnergyU} with $I = [\gb_1,\gb]$ and with the role of $K_\gga$ played by the law $n \mapsto \bP_{h_c^a(\gga)} \times \hP((\tau \cap \htau)_1 = n)$. The assumptions of the lemma are satisfied due to Lemma~\ref{productAsymptotics}, because $J := h_c^a(I)$ is a compact subset of $(- \infty,0)$ as $h_c^a$ is continuous, decreasing, with $h_c^a(\gb_1) < 0$.

\smallskip

\noindent \textbf{Case 2:} Assume $\gb \leq \gb_0$. Then $h_c^a(\gb) = 0$, by Proposition~\ref{pr:ann.crit.curve}. 

\smallskip

\textsc{Lower bound:} Since
\begin{equation}
Z^a_{n,\gb,\gep} \geq \bE \hE \left( e^{\gep \sum_{k=1}^n \gd_k} \gd_n \hat \gd_n \right) = \bE \left( e^{\gep \sum_{k=1}^n \gd_k} \gd_n \right) \hP(n \in \htau),
\end{equation}
the lower bound follows from standard results on homogeneous pinning, see~\cite[Theorem 2.1]{Gi2007}. 

\textsc{Upper bound:} We assume that $\gep \in (0,\gb)$. Pick $p > 1$ so that $p (\gb - \gep) \leq \gb_0$, and let $q > 1$ be defined by $p^{-1} + q^{-1} = 1$. Then, by H\"older's inequality,
\begin{equation}
\begin{aligned}
Z^{a,c}_{n,\gb,\gep} &= \bE \hE \left( e^{\gep \sum_{k=1}^n \gd_k + \gb \sum_{k=1}^n \gd_k \hat \gd_k} \gd_n \hat \gd_n \right) \leq \bE \hE \left( e^{2 \gep \sum_{k=1}^n \gd_k + (\gb - \gep) \sum_{k=1}^n \gd_k \hat \gd_k} \gd_n \hat \gd_n \right)\\
&\leq \bE \hE \left( e^{2 q \gep \sum_{k=1}^n \gd_k} \gd_n \right)^{1/q} \bE \hE \left( e^{p (\gb - \gep) \sum_{k=1}^n \gd_k \hat \gd_k} \gd_n \hat \gd_n \right)^{1/p}.
\end{aligned}
\end{equation}
Observe that the quantity $\bE \hE(e^{p (\gb - \gep) \sum_{k=1}^n \gd_k \hat \gd_k} \gd_n \hat \gd_n)$ is the partition function at $p (\gb - \gep)$ for the homopolymer defined by the renewal $\tau \cap \htau$, whose critical parameter is $\gb_0$. Thus, we obtain $F^a(\gb,\gep) \leq \frac{1}{q} F(0,2 q \gep)$ and the required bound follows again from~\cite[Theorem 2.1]{Gi2007}.
\end{proof}

\section{Proof of the quenched results}
\subsection{Existence of the free energy}

\begin{proof}[Proof of Proposition \ref{pr:exist.que.free.energy}]
When $\gb = 0$, the model reduces to the homogeneous pinning model, for which we know that the free energy $F(0,h)$ exists. Therefore we assume that $\gb > 0$, and consider two cases:

\textsc{Case 1}: Assume that $\hmu = \infty$.
\noindent Then,
\begin{equation}
\bE \left( e^{h \sum_{k=1}^n \gd_k} \gd_n \right) \leq Z_{n,\gb,h} \leq e^{\gb|\htau \cap \{1,\ldots,n\}|} \bE \left( e^{h \sum_{k=1}^n \gd_k} \gd_n \right).
\end{equation}
Since, by the Renewal Theorem, $|\htau \cap \{1,\ldots,n\}| / n \to 1/\hmu = 0$ as $n \to \infty$, almost surely and in $L^1(\hP)$, we get that $(n^{-1} \log Z_{n,\gb,h})_{n \in \bbN}$ converges to $F(0,h)$.

\textsc{Case 2} : Assume that $\hmu < \infty$. 
\noindent Suppose first that $\htau$ is distributed according to $\hP_s$, in which case we apply Kingman's subadditive ergodic theorem. Indeed, if we define (let us temporarily omit $\gb$ and $h$)
\begin{equation}\label{eq:def_alt_part_fct}
Z_{m,n} = \bE \left( e^{\sum_{k=m+1}^n (h + \gb \hat \gd_k) \gd_k} \gd_n \mid m \in \tau \right), \qquad n > m \geq 0,
\end{equation}
then, by restricting $Z_{0,n}$ to the event $\{m \in \tau\}$ and using the Markov property, we get
\begin{equation} \label{eq:zmn_superadd}
Z_{0,n} \geq Z_{0,m} Z_{m,n},
\end{equation}
so that the process $\{- \log Z_{0,n}: n \geq 0\}$ is sub-additive. The remaining assumptions of Kingman's subadditive ergodic theorem can be shown to be satisfied, see~\cite[Theorem 7.4.1]{Du10}) (it is crucial here that $\htau$ is stationary), and the claim of the proposition follows. Let us temporarily denote by $F_s(\gb,h)$ the quenched free energy in this case.

Suppose now that $\htau$ is distributed according to $\hP$. For $0 \leq m < n$, we define
\begin{equation}
\cZ_{m,n} = Z_{\htau_m,\htau_n} = \bE \left( e^{\sum_{k=\htau_m+1}^{\htau_n} (h + \gb \hat \gd_k) \gd_k} \gd_{\htau_n} \mid \htau_m \in \tau \right).
\end{equation}
Then, similarly to \eqref{eq:zmn_superadd},
\begin{equation}
\cZ_{0,n} \geq \cZ_{0,m} \cZ_{m,n},
\end{equation}
and one can check that the process $\{Y_{0,n} := - \log \cZ_{0,n}: n \geq 0\}$ satisfies the conditions of Kingman's subadditive ergodic theorem. Note that condition (iv) in that theorem~\cite[Theorem 7.4.1]{Du10} is satisfied because $\hat \bE (\log \cZ_{0,n}) \leq (|h| + \gb) \hmu n$ and $\log \cZ_{0,1} \geq h + \gb +  \log K(\htau_1)$, which is integrable w.r.t.~$\hP$. Thus, $\lim_{n \to \infty} \frac{1}{n} Y_{0,n}$ exists $\hP$-a.s. and in $L^1(\hP)$, and is non-random. Let us denote this limit by $\phi(\gb,h)$. Now, for any $n \in \bbN$, there exists a unique $k(n) \geq 0$ such that $n \in (\htau_{k(n)},\htau_{k(n)+1}]$. Therefore,
\begin{equation}
Z_{0,n} \geq Z_{0,\htau_{k(n)}} Z_{\htau_{k(n)},n} \geq Z_{0,\htau_{k(n)}} e^{h + \gb} K(n - \htau_{k(n)}),
\end{equation}
and with the same reasoning,
\begin{equation}
Z_{0,n} \leq e^{- (h + \gb)} Z_{0,\htau_{k(n)+1}} / K(\htau_{k(n)+1} - n).
\end{equation}
Using the fact that $(n^{-1} \log \htau_n)_{n \in \bbN}$ converges to zero $\hP$-a.s.~and in $L^1(\hP)$, in combination with the lower and upper bounds above, we obtain that $\lim_{n \to \infty} \frac{1}{n} \log Z_{0,n}$ exists $\hP$-a.s.~and in $L^1(\hP)$, and equals $F(\gb,h) = \phi(\gb,h) / \hat \bE(\htau_1).$

We now prove that $F_s(\gb,h) = F(\gb,h)$. Assume in the rest of the proof that $\htau$ is distributed according to $\hP_s$. In the following we shall temporarily write $Z_n$ with a superscript indicating on which environment we are considering the partition function. By imposing the first step of $\tau$ to be equal to $\htau_0$, we get
\begin{equation} \label{eq:lb_Znhtau}
Z_{n + \htau_0}^{\htau} \geq Z_{n}^{\htau - \htau_0} K(\htau_0) e^{\gb + h}, \qquad n \in \bbN,
\end{equation}
where $\htau - \htau_0 = \{\htau_i - \htau_0 \colon i \geq 0\}$. Since $\htau - \htau_0$ has law $\hP$, we get $F_s(\gb,h) \geq F(\gb,h)$. In the other direction, we get, since $|\tau \cap (0,\htau_0]| \leq \htau_0$,
\begin{equation} \label{eq:comb1}
Z_{n + \htau_0}^{\htau} \leq e^{\htau_0 (\gb + |h|)} \check Z_n^{\htau - \htau_0}, \qquad n \in \bbN,
\end{equation}
where $\check Z_n$ is the partition function obtained by replacing $\bP$ by a slightly modified law $\check \bP$ in which the distribution of the first inter-arrival $\tau_1$ is the one of the overshoot of $\tau$ with respect to $\htau_0$, namely
\begin{equation}
\check \bP(\tau_1 = \ell) = \bP(\inf \{n \geq \htau_0 \colon n \in \tau \} = \htau_0 + \ell), \qquad \ell \geq 0.
\end{equation}
Note that this distribution depends on the random variable $\htau_0$.
By decomposing on this first inter-arrival, we get
\begin{equation} \label{eq:comb2}
\begin{aligned}
\check Z_n^{\htau - \htau_0} &= \check \bP(\tau_1 > n) + \sum_{k=0}^n \check \bP(\tau_1 = k) Z_{n - k}^{\htau - \htau_0 -k}\\
&\leq 1 + e^{-h}\sum_{k=0}^n  K(k)^{-1} Z_n^{\htau - \htau_0}, \quad \text{ (with the convention $K(0)=1$ and $Z_0 = 1$)}
\end{aligned}
\end{equation}
where in the last inequality we have bounded the probabilities by one and used that $Z_n^{\htau - \htau_0} \ge K(k) e^{h} Z_{n - k}^{\htau - \htau_0 - k}$, similarly to \eqref{eq:lb_Znhtau}. Combining \eqref{eq:comb1} and \eqref{eq:comb2}, and using that $\sum_{k=1}^n K(k)^{-1}$ is only polynomially increasing in $n$, we may now compare the almost-sure limits of $(1/n) \log Z_{n + \htau_0}^{\htau}$ and $(1/n) \log Z_n^{\htau - \htau_0}$ and obtain that $F_s(\gb,h) \leq F(\gb,h)$. This completes the proof.
\end{proof}

\subsection{Smoothing inequality}
This section is devoted to the proof of Theorem~\ref{thm:smoothing}.
Our proof is inspired by the original work of Giacomin and Toninelli in the i.i.d.~disorder set-up~\cite{GiT2006}  and is based on a localization strategy, which for the polymer consists in hitting favorable regions of the environment, where disorder is tilted. The cost of finding such regions is given by the entropy estimate in Lemma~\ref{lem:spec.rel.ent.tilt}. Inspired by Caravenna and den Hollander~\cite{CadH13}, we also compare the free energy for a tilted disorder with that for a shifted disorder, see Lemma~\ref{lem:comp.tilt.shift}.\\

Let us start by considering the family of tilted disorder measures defined by
\begin{equation} \label{eq:def.tilt.meas}
\frac{\dd \hat \bP_{n,\gt}}{\dd \hat \bP} = \frac{e^{\gt \sum_{k=1}^n \hat \gd_k}}{\hat Z_{n,\gt}} \, \hat \gd_n, \qquad n \in \bbN, \quad \gt \in \bbR.
\end{equation}
Note that this is nothing else than a pinning measure for the disorder renewal and the normalizing constant in \eqref{eq:def.tilt.meas} is just the partition function of the homopolymer ($\gb = 0$) with pinning strength $\gt$ and underlying renewal $\hat\tau$. The corresponding relative entropy rate is defined as
\begin{equation} \label{eq:def.spec.rel.ent}
h_\infty(\gt) = \lim_{n \to \infty} \frac{1}{n} h(\hP_{n,\gt} | \hP) = \lim_{n \to \infty} \frac{1}{n} \hE_{n,\gt} \Big( \log \frac{\dd \hP_{n,\gt}}{\dd \hP} \Big).
\end{equation}
The proof of Lemma~\ref{lem:spec.rel.ent.tilt} below shows that this limit exists. Furthermore, it is non-negative as the limit of non-negative real numbers.\\

For the proof of Theorem~\ref{thm:smoothing} we will need three lemmas, which we now state and prove.
\begin{lemma}[Asymptotics of the relative entropy rate] \label{lem:spec.rel.ent.tilt}
There exists a constant $c \in (0,\infty)$ such that
\begin{equation}
h_\infty(\gt) = \gt \hat F'(\gt) - \hat F(\gt) \sim c \, \gt^{2 \wedge \ha}{(1+ |\log \gt| \ind_{\{\ha = 2\}})} \qquad \text{as } \gt \downarrow 0^+,
\end{equation}
where $\hat F(\gt)$ is the free energy of the homogeneous pinning model with parameter $\gt$ and renewal $\htau$.
\end{lemma}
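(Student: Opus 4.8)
The plan is to analyze the behavior of the homogeneous pinning free energy $\hat F(\gt)$ of the renewal $\htau$ near $\gt=0^+$, and then differentiate. Recall that $\hat F(\gt)$ is the free energy of the homopolymer with inter-arrival law $\hat K(n)=c_{\hat K}n^{-(1+\ha)}$ and pinning strength $\gt$. The standard theory of homogeneous pinning (see \cite[Section 2]{Gi2007}) tells us that $\hat F(\gt)$ is real-analytic and strictly increasing on $(0,\infty)$, that $\hat F(0)=0$ (since $\htau$ is recurrent), and that $\hat F$ is characterized implicitly: for $\gt>0$, $\hat F(\gt)$ is the unique $F>0$ solving $\sum_{n\geq1}e^{-Fn}\hat K(n)=e^{-\gt}$. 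The critical exponent of the homogeneous transition is $\max(1,1/\ha)$, but since here $\ha>1$ the exponent is $1$, i.e.\ $\hat F(\gt)\sim \gt/\hat\mu$ as $\gt\downarrow0$ by a first-order expansion (using $\hat\mu=\hat E(\htau_1)<\infty$). To get the \emph{second-order} behaviour of $\hat F$, which is what drives $h_\infty$, I would expand $\Phi(F):=\sum_{n\geq1}(1-e^{-Fn})\hat K(n) = 1 - e^{-\gt}\sim \gt$ more carefully: writing $\Phi(F) = \hat\mu F - \psi(F)$ where $\psi(F) := \sum_{n\geq1}(Fn-1+e^{-Fn})\hat K(n)\geq 0$, a Tauberian / Riemann-sum estimate gives $\psi(F)\sim c' F^{2\wedge\ha}$ for $\ha\neq 2$ and $\psi(F)\sim c' F^2|\log F|$ for $\ha=2$, because the summand behaves like $\min(F^2n^2,1)\hat K(n)$ and the crossover is at $n\sim 1/F$. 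Inverting $\gt = \hat\mu\hat F(\gt) - \psi(\hat F(\gt))(1+o(1))$ yields $\hat F(\gt) = \gt/\hat\mu + c''\gt^{2\wedge\ha}(1+|\log\gt|\ind_{\{\ha=2\}})(1+o(1))$.

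Next I would deal with the algebraic identity $h_\infty(\gt)=\gt\hat F'(\gt)-\hat F(\gt)$. This is the Legendre-type relation between the relative entropy rate and the free energy of the pinning measure; one should either cite it from the homogeneous pinning literature or derive it directly from the definition \eqref{eq:def.spec.rel.ent}: $(1/n)\hE_{n,\gt}(\log(\dd\hP_{n,\gt}/\dd\hP)) = (1/n)\hE_{n,\gt}(\gt\sum_{k=1}^n\hat\gd_k) - (1/n)\log\hat Z_{n,\gt}$, and using $(1/n)\hE_{n,\gt}(\sum_{k=1}^n\hat\gd_k)\to\hat F'(\gt)$ (the contact fraction equals the derivative of the free energy, by convexity and standard arguments) together with $(1/n)\log\hat Z_{n,\gt}\to\hat F(\gt)$. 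This simultaneously shows the limit defining $h_\infty(\gt)$ exists, as promised in the text. A point of care: one needs $\hat F$ differentiable at $\gt$ and convergence of the Cesàro mean of the contact fraction, both of which are standard for homogeneous pinning with $\gt>0$.

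Finally I would combine the two. From the expansion $\hat F(\gt)=\gt/\hat\mu + a(\gt)$ with $a(\gt)\asymp\gt^{2\wedge\ha}(1+|\log\gt|\ind_{\{\ha=2\}})$, I get $\hat F'(\gt)=1/\hat\mu + a'(\gt)$, so $\gt\hat F'(\gt)-\hat F(\gt) = \gt a'(\gt) - a(\gt)$. Since $a(\gt)$ is (asymptotically) a power $c''\gt^{\rho}$ with $\rho=2\wedge\ha\in(1,2]$ up to a logarithmic correction, $\gt a'(\gt)-a(\gt) = (\rho-1)c''\gt^\rho(1+o(1))$ when $\rho<2$, which is positive since $\rho>1$; and in the $\ha=2$ case, with $a(\gt)\sim c''\gt^2|\log\gt|$, one computes $\gt a'(\gt)-a(\gt)\sim c''\gt^2|\log\gt|$ again. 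This gives the claimed $h_\infty(\gt)\sim c\,\gt^{2\wedge\ha}(1+|\log\gt|\ind_{\{\ha=2\}})$ with $c>0$. To make the step from "$a(\gt)\asymp\gt^\rho$" to "$\gt a'(\gt)-a(\gt)\sim(\rho-1)c''\gt^\rho$" rigorous one cannot just differentiate an asymptotic equivalence; instead I would work directly with $h_\infty$, writing $h_\infty(\gt) = \gt\hat F'(\gt)-\hat F(\gt)$ and using the implicit relation to express both $\hat F$ and $\hat F'$ (via $\hat F'(\gt) = e^{-\gt}/\sum_{n\geq1}ne^{-\hat F\gt}\cdots$, i.e.\ differentiating $\sum e^{-\hat F(\gt)n}\hat K(n)=e^{-\gt}$ implicitly to get $\hat F'(\gt)=e^{-\gt}/\sum_n n e^{-\hat F(\gt)n}\hat K(n)$), then expand the resulting explicit function of $\hat F(\gt)$ as $\hat F(\gt)\downarrow0$ using the same Tauberian estimates.

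The main obstacle I expect is the precise second-order Tauberian expansion of $\psi(F)=\sum_{n\geq1}(Fn-1+e^{-Fn})\hat K(n)$ and, correspondingly, of $\sum_n n e^{-Fn}\hat K(n)$ near $F=0$, including getting the constant $c>0$ and the logarithmic correction at the boundary case $\ha=2$ right; these are delicate because the relevant sums are dominated by the crossover scale $n\sim 1/F$ and one must control both the head and the tail of the series. The appendix lemmas on series asymptotics (Lemma~\ref{seriesAsymptotics}) should provide exactly the tools needed, so the work is bookkeeping rather than new ideas, but it is the part where sign errors and misplaced constants are most likely. The algebraic/identity part and the differentiability claims are routine consequences of homogeneous pinning theory.
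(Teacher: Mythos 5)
Your proposal is correct and follows essentially the same route as the paper: the identity $h_\infty(\gt)=\gt\hat F'(\gt)-\hat F(\gt)$ is obtained exactly as in the paper (contact fraction $\to\hat F'(\gt)$, $(1/n)\log\hat Z_{n,\gt}\to\hat F(\gt)$), and the asymptotics are then derived from the second-order Tauberian expansion of the homogeneous free energy and of its derivative via the implicit equation $\sum_n e^{-\hat F(\gt)n}\hat K(n)=e^{-\gt}$, which is precisely the content of the paper's Proposition~\ref{pr:asympt.hom.free.energy} and Lemma~\ref{seriesAsymptotics}. Your self-correction (expanding $\hat F'$ directly from the differentiated implicit relation rather than differentiating an asymptotic equivalence) is exactly how the paper handles that point, and the resulting constants indeed yield $c>0$ in all cases $\ha\in(1,2)$, $\ha=2$, $\ha>2$.
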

\begin{proof}[Proof of Lemma~\ref{lem:spec.rel.ent.tilt}] A straightforward computation gives
\begin{equation} \label{eq:ent_n}
\frac{1}{n} h(\hP_{n,\gt} | \hP) = \frac{1}{n} \hE_{n,\gt} \Big( \log \frac{\dd \hP_{n,\gt}}{\dd \hP} \Big) = \gt \, \hE_{n,\gt} \Big( \frac{1}{n} \sum_{k=1}^n {\hat \gd_k} \Big) - \frac{1}{n} \log \hat Z_{n,\gt},
\end{equation}
and it is now a standard fact for homogeneous pinning models that the expectation above converges to $\hat F'(\gt)$, see~\cite[Section 2.4]{Gi2007}. Thus, as $n \to \infty$, the quantities in \eqref{eq:ent_n} converge to
\begin{equation}
\gt \hat F'(\gt) - \hat F(\gt) = \{\gt (\hat F'(\gt) - \hat F'(0))\} + \{\gt \hat F'(0) - \hat F(\gt)\}.
\end{equation}
A Tauberian analysis reveals that both terms are of order $\gt^{2 \wedge \ha}{(1+ |\log \gt| \ind_{\{\ha = 2\}})}$, as proved in Proposition~\ref{pr:asympt.hom.free.energy}, and the precise values of the constants in \eqref{eq:asymptoticsFh} show that the constant $c$ in Lemma~\ref{lem:spec.rel.ent.tilt} is indeed positive.
\end{proof}
Let $[n] = \{1,2,\ldots,n\}$. The probability distribution $\hP_{n,\gt}$ naturally induces a measure on $\{0,1\}^{[n]}$, which is a partially ordered set with the following order relation: for the configurations $\eta, \eta' \in \{0,1\}^{[n]}$, we write $\eta \leq \eta'$ if $\eta(x) \leq \eta'(x)$ for every $x \in[n]$. The law $\hP_{n,\gt}$ is called {\it monotone} 
 if 
\begin{equation} \label{RFCondProbability}
\hP_{n,\gt}(\hat \gd_k = 1 \mid \hat \gd([n] \sm \{k\}) = \eta([n] \sm \{k\}) ) \leq \hP_{n,\gt}(\hat \gd_k = 1 \mid \hat \gd([n] \sm \{k\}) = \eta'([n] \sm \{k\})),
\end{equation}
for every $k\in[n]$ and $\eta, \eta' \in \{0,1\}^{[n]}$ such that $\eta \leq \eta'$ (see Definition~4.9 in~\cite{GHM1999}) and both conditional probabilities are well-defined.

\begin{lemma}[Monotonicity property] \label{lem:mon.pro}
If the sequence $\{\hK(n)\}_{n \in \bbN}$ is log-convex\footnote{We say that a sequence of positive real numbers $\{u_n\}_{n \in \bbN}$ is log-convex if $u_n^2 \leq u_{n-1} u_{n+1}$  for all $n \geq 2$. It is equivalent to that 
$\frac{u_{n+m}}{u_{n+m'}} \leq \frac{u_m}{u_{m'}}$ for all $m, m',n\in \bbN$ with  $m \leq m'$.
}
 then the law $\hP_{n,\gt}$ is monotone for every $n \geq 1$.
\end{lemma}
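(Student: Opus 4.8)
The plan is to characterize the conditional probabilities appearing in~\eqref{RFCondProbability} explicitly in terms of the inter-arrival law $\hK$ and to show that, thanks to log-convexity of $\{\hK(n)\}_{n\in\bbN}$, these probabilities are monotone with respect to the conditioning configuration. First, I would fix $n$ and $\gt$, and observe that under $\hP_{n,\gt}$ the renewal $\htau$ is pinned at $n$, so a configuration $\eta\in\{0,1\}^{[n]}$ has positive mass only if $\eta(n)=1$ and the support of $\eta$ is a subset of $\htau$ arising from some admissible renewal trajectory. The key observation is that, conditionally on the values $\hat\gd(x)$ for all $x\in[n]\sm\{k\}$, the only freedom left is the single bit $\hat\gd_k$, and whether $\hat\gd_k=1$ is allowed at all depends on the two neighbouring occupied sites $i<k<j$ of $\eta$ in $[n]\cup\{0\}$ (with $0$ always occupied and $n$ occupied). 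So the relevant conditional law is governed purely by the ``local'' factor
\begin{equation} \label{eq:ratio}
\hP_{n,\gt}(\hat\gd_k=1\mid \text{neighbours }i,j) = \frac{e^{\gt}\hK(k-i)\hK(j-k)}{e^{\gt}\hK(k-i)\hK(j-k) + \hK(j-i)},
\end{equation}
where the $\hK(j-i)$ term is absent (the probability is $1$) when $j=n$ in a degenerate way — one must be slightly careful at the right endpoint, but the same formula works with the convention that there is no ``skip'' term when $k$ is forced. This factorization uses the renewal (Markov) structure and the fact that the tilt contributes a flat $e^{\gt}$ per occupied site.

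Next I would compare~\eqref{eq:ratio} for two conditioning configurations $\eta\le\eta'$. Since both conditional probabilities are assumed well-defined, both $\eta$ and $\eta'$ are compatible with $\hat\gd_k$ being either value, hence both have occupied neighbours of $k$; call them $i,j$ for $\eta$ and $i',j'$ for $\eta'$. Because $\eta\le\eta'$, the configuration $\eta'$ has (weakly) more occupied sites, so its neighbours of $k$ are closer: $i\le i'<k<j'\le j$. The function in~\eqref{eq:ratio} is increasing in the conditional odds $e^{\gt}\hK(k-i)\hK(j-k)/\hK(j-i)$, so it suffices to show
\begin{equation} \label{eq:logconvex-use}
\frac{\hK(k-i)\,\hK(j-k)}{\hK(j-i)} \le \frac{\hK(k-i')\,\hK(j'-k)}{\hK(j'-i')}.
\end{equation}
This should follow from the log-convexity characterization stated in the footnote, namely $\hK(n+m)/\hK(n+m')\le \hK(m)/\hK(m')$ for $m\le m'$: one peels off the inequality $i\mapsto i'$ first (holding $j$ fixed) and then $j\mapsto j'$ (holding $i'$ fixed), applying the characterization once for each move, with the appropriate identification of which argument plays the role of $m,m',n$. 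A short bookkeeping argument reduces~\eqref{eq:logconvex-use} to two applications of that two-variable inequality.

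The main obstacle I expect is the careful treatment of the endpoint/degenerate cases: when $k=n$ the bit is forced, when the neighbour $j$ equals $n$ the ``skip over $k$'' renewal trajectory may or may not be admissible, and when $\eta$ and $\eta'$ have the same neighbours of $k$ there is nothing to prove — so the statement~\eqref{RFCondProbability} is only non-trivial when $\eta<\eta'$ strictly near $k$. One must also verify that the two conditional probabilities being ``well-defined'' genuinely forces both neighbour-pairs to straddle $k$, which is what makes~\eqref{eq:logconvex-use} the right reduction. Once these cases are organized, the heart of the argument is purely the elementary log-convexity manipulation, and the monotonicity of the Möbius-type function $x\mapsto x/(x+c)$ in its argument; neither step is deep, but the combinatorial casework around the boundary is where errors would creep in. I would structure the write-up so that the generic interior case is done first and cleanly, then dispatch the boundary conventions in a remark.
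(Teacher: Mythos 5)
Your proposal is correct and is essentially the paper's proof: you compute the conditional odds of $\hat\gd_k=1$ from the nearest occupied neighbours $i<k<j$ of $k$ (with $0$, and the pinned site $n$, counting as occupied), reduce monotonicity to the inequality $\hK(k-i)\hK(j-k)/\hK(j-i)\le \hK(k-i')\hK(j'-k)/\hK(j'-i')$ for $i\le i'<k<j'\le j$, and obtain it by two applications of the two-variable log-convexity characterization, exactly as in the paper. (The only boundary case needing separate treatment is $k=n$, where both conditional probabilities equal $1$; when $j=n$ no special convention is required, since $n$ is occupied under $\hP_{n,\gt}$ and the same formula applies.)
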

We may prove that this is actually an equivalence, but we won't need that fact.

\begin{proof}[Proof of Lemma \ref{lem:mon.pro}] 
Pick $k \in \{1,\ldots,n - 1\}$ and $\eta \in \{0,1\}^{[n]}$ (the case $k=n$ is trivial since both sides of \eqref{RFCondProbability} equal one). Writing the definition of the conditional probability in \eqref{RFCondProbability}, we see that proving the claimed monotonicity is equivalent to proving that the function 
\begin{equation} \label{monotonicityRatio}
\frac{\hP_{n,\gt}(\hat \gd_k = 0, \hat \gd([n] \sm \{k\}) = \eta([n] \sm \{k\}))}{\hP_{n,\gt}(\hat \gd_k = 1, \hat \gd([n] \sm \{k\}) = \eta([n] \sm \{k\}))}
\end{equation}
is non-increasing in $\eta$. Let $a = \max(\{j < k \colon \eta_j = 1\}\cup\{0\}), b = \min(\{j > k \colon \eta_b = 1\}\cup\{n\})$ and $r = \# \{j \in [n] \sm \{k\} \colon \eta_j = 1\}$. Then the ratio in \eqref{monotonicityRatio} equals
\begin{equation}
\frac{e^{\gt r} \hK(b - a)}{e^{\gt (r + 1)} \hK(k - a) \hK(b - k)} = e^{- \gt} \frac{\hK(b - a)}{\hK(k - a)\hK(b - k)}.
\end{equation}
Now, pick $\eta' \in \{0,1\}^{[n]}$ with $\eta \leq \eta'$ and define $a', b'$ for $\eta'$ similarly as for $\eta$ above. Necessarily $0 \leq a \leq a' < k < b' \leq b \leq n$ and we would like to have that 
\begin{equation} \label{eq:logconv}
\frac{\hK(b - a)}{\hK(k - a) \hK(b - k)} \geq \frac{\hK(b' - a')}{\hK(k - a') \hK(b' - k)}.
\end{equation}
This actually follows from the log-convexity of $\hK$. Indeed,
\begin{equation}
\frac{\hK(b - a)}{\hK(k - a) \hK(b - k)} = \frac{\hK((a' - a) + (b - a'))}{\hK((a' - a) + (k - a')) \hK(b - k)},
\end{equation}
which, by log-convexity and since $b - a' \geq k - a'$, is greater than
\begin{equation}
\frac{\hK(b - a')}{\hK(k - a') \hK(b - k)} = \frac{\hK((b - b') + (b' - a'))}{\hK(k - a') \hK((b - b') + (b' - k))},
\end{equation}
which in turn, since $b' - a' \geq b' - k$, is greater than the right-hand side of \eqref{eq:logconv}.
\end{proof}
\begin{lemma}[Comparison between tilting and shifting] \label{lem:comp.tilt.shift}
Suppose that the sequence $\{\hK(n)\}_{n \in \bbN}$ is log-convex. For all $h \in \bbR$, there exist a positive constant $c$ and $\underline \gb,\underline \gt > 0$ such that
\begin{equation}
F(\gb,h;\gt) \geq F(\gb,h + c \gb \gt;0), \qquad 0 \leq \gt \leq \underline \gt, \quad \gb \leq \underline \gb,
\end{equation}
where
\begin{equation} \label{eq:defFtilt}
F(\gb,h;\gt) = \varlimsup_{n \to \infty} \frac{1}{n} \hE_{n,\gt} (\log Z_{n,\gb,h}).
\end{equation}
\end{lemma}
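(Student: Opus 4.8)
I would work in finite volume and compare, up to an error $o(n)$, the tilted average $\hE_{n,\gt}(\log Z_{n,\gb,h})$ with the shifted one $\hE_{n,0}(\log Z_{n,\gb,h+c\gb\gt})$; since $F(\gb,h;\gt)=\varlimsup_n n^{-1}\hE_{n,\gt}(\log Z_{n,\gb,h})$ and $F(\gb,h+c\gb\gt;0)=\varlimsup_n n^{-1}\hE_{n,0}(\log Z_{n,\gb,h+c\gb\gt})$ (the pinning of the disorder endpoint being immaterial for the free energy, as in Proposition~\ref{pr:exist.que.free.energy}), this gives the lemma after taking $\varlimsup_n$. The starting point is the identity $\log Z_{n,\gb,h+c\gb\gt}(\hat\gd)=\log Z_{n,\gb,h}(\hat\gd+c\gt\ind)$, where $v\mapsto Z_{n,\gb,h}(v):=\bE\big(e^{\sum_{k=1}^n(h+\gb v_k)\gd_k}\gd_n\big)$ is extended to $v\in[0,\infty)^{[n]}$ and $\ind=(1,\dots,1)$. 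Writing $G_n:=\big(\hE_{n,\gt}-\hE_{n,0}\big)(\log Z_{n,\gb,h})$ and $L_n:=\hE_{n,0}\big(\log Z_{n,\gb,h}(\hat\gd+c\gt\ind)-\log Z_{n,\gb,h}(\hat\gd)\big)$, one has $\hE_{n,\gt}(\log Z_{n,\gb,h})-\hE_{n,0}(\log Z_{n,\gb,h+c\gb\gt})=G_n-L_n$, so it suffices to prove $G_n\ge L_n-o(n)$ for suitable $c,\underline\gb,\underline\gt>0$.

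\textbf{The loss $L_n$.} Since $\partial_{v_j}\log Z_{n,\gb,h}(v)=\gb\,\bE^v_{n,\gb,h}(\gd_j)$, where $\bE^v_{n,\gb,h}$ is the polymer measure with disorder $v$, and since $v\mapsto\sum_j\partial_{v_j}\log Z_{n,\gb,h}(v)$ is non-decreasing along $\ind$ (by convexity of $\log Z_{n,\gb,h}$), one gets
\[
\log Z_{n,\gb,h}(\hat\gd+c\gt\ind)-\log Z_{n,\gb,h}(\hat\gd)=\gb\int_0^{c\gt}\bE^{\hat\gd+s\ind}_{n,\gb,h}\big(|\tau\cap[n]|\big)\,\dd s\le c\gt\,\gb\,\bE^{(1+c\gt)\ind}_{n,\gb,h}\big(|\tau\cap[n]|\big),
\]
and the last expectation is the contact number of the homogeneous polymer at $h+\gb(1+c\gt)$, equal to $n\,\partial_h F(0,h+\gb(1+c\gt))+o(n)$. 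Hence $L_n\le c\gt\,\gb\,n\,(\theta_0+o(1))$ with $\theta_0:=\partial_h F(0,h)$, where $o(1)\to0$ as $\gb,\gt\downarrow0$ and $n\to\infty$.

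\textbf{The gain $G_n$.} This is where log-convexity of $\hK$ enters. By Lemma~\ref{lem:mon.pro}, each $\hP_{n,\gl}$ is monotone, hence positively associated (FKG), and $\gl\mapsto\hP_{n,\gl}$ is stochastically increasing. Differentiating the tilt, $\frac{\dd}{\dd\gl}\hE_{n,\gl}(\log Z_{n,\gb,h})=\cov_{n,\gl}\big(\log Z_{n,\gb,h},\sum_{k=1}^n\hat\gd_k\big)\ge0$, so
\[
G_n=\int_0^\gt\sum_{k=1}^n\cov_{n,\gl}\big(\log Z_{n,\gb,h},\hat\gd_k\big)\,\dd\gl .
\]
For each $k$, since $\hat\gd\mapsto\log Z_{n,\gb,h}(\hat\gd)$ is non-decreasing and $\hP_{n,\gl}$ is monotone, conditioning on $\{\hat\gd_k=1\}$ stochastically dominates conditioning on $\{\hat\gd_k=0\}$ on the remaining coordinates, and a short computation gives
\[
\cov_{n,\gl}\big(\log Z_{n,\gb,h},\hat\gd_k\big)\ \ge\ \hP_{n,\gl}(k\in\htau)\,\hP_{n,\gl}(k\notin\htau)\ \hE_{n,\gl}\big[\,\log Z_{n,\gb,h}(\hat\gd^{+k})-\log Z_{n,\gb,h}(\hat\gd^{-k})\,\big|\,\hat\gd_k=0\,\big],
\]
where $\hat\gd^{\pm k}$ is $\hat\gd$ with its $k$-th coordinate set to $1$, resp.\ $0$. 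The bracket equals $\log\!\big(1+(e^\gb-1)\bP^{\hat\gd^{-k}}_{n,\gb,h}(k\in\tau)\big)$ and is $\ge c_0\,\gb\,\bP^{\hat\gd^{-k}}_{n,\gb,h}(k\in\tau)$ for $\gb\le\underline\gb$. Because $\ha>1$ ($\hmu<\infty$), for $k$ in a bulk window of $[n]$ of relative size $\to1$ one has $\hP_{n,\gl}(k\in\htau)\to\hat F'(\gl)\in(0,1)$ uniformly for $\gl\in[0,\gt]$ small, and $\bP^{\hat\gd^{-k}}_{n,\gb,h}(k\in\tau)$ is bounded below by a positive constant uniformly in $\hat\gd$ and $n$ (by comparison with the homogeneous polymer, valid for small $\gb$, together with renewal estimates). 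This yields $G_n\ge c''\,\gb\,\gt\,n$ for some constant $c''>0$.

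\textbf{Conclusion and main difficulty.} Combining the two estimates, $G_n-L_n\ge\gb\gt n\,(c''-c\,\theta_0)+o(\gb\gt n)$, so it suffices to fix $c\in(0,c'')$ (recall $\theta_0\le1$) and then take $\underline\gb,\underline\gt$ small enough to absorb the error terms. The main difficulty is the lower bound on $G_n$: one must (i) make rigorous the covariance/discrete-gradient inequality, using precisely the monotonicity of $\hP_{n,\gl}$ provided by Lemma~\ref{lem:mon.pro} — this is the one place where log-convexity of $\hK$, hence the equality sign in \eqref{ReturnTimeDistributions}, is genuinely needed; and (ii) obtain uniform-in-$n$ lower bounds for the occupation probabilities $\hP_{n,\gl}(k\in\htau)$ of the pinned disorder renewal and for the contact probabilities $\bP^{\hat\gd}_{n,\gb,h}(k\in\tau)$ of the disordered polymer on a bulk window, controlling the boundary effects coming from the two pinned endpoints. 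Finally, the regime where $h$ lies at or below the critical point is treated separately, but is benign: either $F(\gb,h+c\gb\gt;0)=0$, making the claim trivial, or it is arbitrarily small while $G_n\ge0$ already by positive association.
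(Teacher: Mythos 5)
Your overall strategy (trade the gain from tilting the disorder law against the loss from removing the shift $c\gb\gt$, with Lemma~\ref{lem:mon.pro}/FKG supplying the gain) is in the spirit of the paper, but the implementation has a decisive gap in the regime where the lemma is actually needed. The lower bound $G_n\ge c''\gb\gt n$ rests on the claim that the contact probability $\bP^{\hat\gd^{-k}}_{n,\gb,h}(k\in\tau)$ is bounded below by a positive constant uniformly in $n$ for bulk $k$. Since the disorder is non-negative, $h_c(\gb)\le 0$, and in the smoothing argument the lemma is applied at $h=h_c(\gb)$; there (and for all $h\le 0$) the quenched polymer is critical or delocalized, its expected number of contacts is $o(n)$, and no comparison with the homogeneous model at $h\le 0$ can produce a uniform positive bulk contact probability. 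So in the only interesting window, $h\in[h_c(\gb)-c\gb\gt,\,h_c(\gb)]$, both your gain and your loss are $o(n)$, and your endpoint bounds do not compare them: the loss is governed by the contact fraction of the (supercritical) model at $h+\gb(1+c\gt)$, the gain by the contact fraction of the (critical/subcritical) model at $h$, and the former can dominate the latter. Your closing remark that this regime is "benign" is a non sequitur: positivity of $G_n$ only yields $F(\gb,h;\gt)\ge F(\gb,h;0)=0$, not $F(\gb,h;\gt)\ge F(\gb,h+c\gb\gt;0)$, which is exactly the strict gain the smoothing proof requires. (A secondary unjustified step: passing from $\bE^{\hat\gd+s\ind}_{n,\gb,h}(|\tau\cap[n]|)$ to $\bE^{(1+c\gt)\ind}_{n,\gb,h}(|\tau\cap[n]|)$ needs coordinatewise monotonicity of the contact number in the disorder, i.e.\ positive association of the polymer renewal, which would require log-convexity of $K$ — not assumed for $\tau$.)

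The paper avoids precisely this obstruction by never seeking an absolute lower bound of order $\gb\gt n$ on the gain. It proves a pointwise differential inequality: writing $F_n(\gb,h;\gt)=\frac1n\hE_{n,\gt}(\log Z_{n,\gb,h})$, the FKG/monotonicity argument bounds the $\gt$-derivative from below by $\gb e^{-\gb}C_{n,\gt}$ times the $h$-derivative (both derivatives being controlled by the \emph{same} quenched contact fraction $\hE_{n,\gt}\bE_{n,\gb,h}(\gd_k)$, however small it is), with $C_{n,\gt}$ bounded below uniformly via renewal estimates for $\htau$ ($\ha>1$). Integrating this along the interpolation path $t\mapsto(h+c\gb\gt(1-t),\gt t)$ then yields $F_n(\gb,h+c\gb\gt;0)\le F_n(\gb,h;\gt)+O(1/n)$ with no need for any positive lower bound on contact probabilities. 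If you want to salvage your two-endpoint decomposition, you would have to reorganize it into such a relative (gain-rate versus loss-rate at the same parameter point) comparison; as written, the argument fails at and below criticality.
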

\begin{proof}
Define
\begin{equation} \label{Fnbht}
F_n(\gb,h;\gt) = \frac{1}{n} \hE_{n,\gt} (\log Z_{n,\gb,h}), \qquad n \geq 1.
\end{equation}
The idea is borrowed from~\cite{CadH13} and consists in deriving a differential inequality for $F_n$ involving its partial derivatives with respect to $h$ and $\gt$. As it can be easily checked,
\begin{equation} \label{hDerivative}
\frac{\partial}{\partial h} F_n(\gb,h;\gt) = \frac{1}{n} \hE_{n,\gt} \bE \left\{ \bigg( \sum_{k=1}^n \gd_k \bigg) \frac{1}{Z_{n,\gb,h}} e^{\sum_{k=1}^n (h + \gb \hat \gd_k) \gd_k} \gd_n \right\} = \hE_{n,\gt} \bE_{n,\gb,h} \bigg( \frac{1}{n} \sum_{k=1}^n \gd_k \bigg),
\end{equation}
where $\bP_{n,\gb,h}$ is the Gibbs law as defined in \eqref{eq:defquemeas}, and
\begin{equation} \label{FKGApplication}
\frac{\partial}{\partial \gt} F_n(\gb,h;\gt) = \frac{1}{n} \sum_{k=1}^n \hE_{n,\gt} \{(\hat \gd_k - \hE_{n,\gt}(\hat \gd_k)) \log Z_{n,\gb,h}\}.
\end{equation}
For the random configuration $\hat \gd \in \{0,1\}^{[n]}$, we consider the following functions
\begin{equation}
\hat \gd \mapsto \hat \gd_k - \hE_{n,\gt}(\hat \gd_k) \quad  (\forall k\in[n]) \qquad \text{and}\quad 
\hat \gd\mapsto \log Z_{n, \gb, h}|_{\hat \gd_k=y} \quad(\forall y\geq0)
\end{equation}
Since these functions are non-decreasing in $\hat \gd$ and $\hP_{n,\gt}$ is monotone (this is a consequence of Lemma~\ref{lem:mon.pro}, which we may use since $\hat K$ is log-convex, by \eqref{ReturnTimeDistributions}), we can say by applying the FKG inequality (as stated in~\cite[Theorem~4.11]{GHM1999}) that for all $k\in[n]$ and $y\ge0$,
\beq
\hat \bE_{n, \gt}\{(\hat \gd_k-\hat\bE_{n, \gt}(\hat \gd_k))\log Z_{n, \gb, h}|_{\hat \gd_k=y}\} \geq  \hat \bE_{n, \gt}\{(\hat \gd_k-\hat\bE_{n, \gt}(\hat \gd_k))\} \hat \bE_{n, \gt}\{ \log Z_{n, \gb, h}|_{\hat \gd_k=y}\} = 0.
\eeq
Therefore, we have
\begin{align} \label{eq:deriv_F_theta}
\begin{aligned}
\frac{\partial}{\partial \gt} F_n(\gb,h;\gt) &\geq 
 \frac{1}{n} \sum_{k=1}^n \hE_{n,\gt} \big( (\hat \gd_k - \hE_{n,\gt} (\hat \gd_k)) (\log Z_{n,\gb,h} - \log Z_{n,\gb,h}|_{\hat \gd_k = \hE_{n,\gt} (\hat \gd_k)}) \big)\\
&= \frac{1}{n} \sum_{k=1}^{n-1} \hE_{n,\gt} \Big( (\hat \gd_k - \hE_{n,\gt} (\hat \gd_k)) \int_{\hE_{n,\gt}(\hat \gd_k)}^{\hat \gd_k} \frac{\partial}{\partial y} \big( \log Z_{n,\gb,h}|_{\hat \gd_k = y} \big) \, dy \Big),
\end{aligned}
\end{align}
where we have removed the term $k=n$ which is zero.
For each $k \in [n]$, we introduce the function
\begin{equation} \label{eq:eqfk}
f_k(y) = \frac{1}{\gb} \big( \frac{\partial}{\partial y} \log Z_{n,\gb,h}|_{\hat \gd_k = y} \big) = \bE_{n,\gb,h}|_{\hat \gd_k = y} (\gd_k).
\end{equation}
Notice that $f_k(\hat \gd_k) = \bE_{n,\gb,h} (\gd_k)$. Since the $f_k$'s are non-negative for any $k$ and $\hat \gd_k$ takes values 0 or 1, we obtain
\begin{align} \label{eq:sqLB}
\begin{aligned}
&\frac{\gb}{n} \sum_{k=1}^{n-1} \hE_{n,\gt} \left( (\hat \gd_k - \hE_{n,\gt} (\hat \gd_k))^2 \frac{1}{\hat \gd_k - \hE_{n,\gt} (\hat \gd_k)} \int_{\hE_{n,\gt} (\hat \gd_k)}^{\hat \gd_k} f_k(y) \, dy \right)\\
&\geq C_{n,\gt} \times \frac{\gb}{n} \sum_{k=1}^{n-1} \hE_{n,\gt} \left( \frac{1}{\hat\gd_k - \hE_{n,\gt} (\hat\gd_k)} \int_{\hE_{n,\gt} (\hat\gd_k)}^{\hat\gd_k} f_k(y) \, dy \right),
\end{aligned}
\end{align}
where
\begin{equation}
C_{n,\gt} := \min\{(1 - \hat\bE_{n,\gt} (\hat\gd_k))^2, \hat\bE_{n,\gt} (\hat\gd_k)^2 : k=1, 2, \ldots, { n-1}\}.
\end{equation}
Note that the denominators in \eqref{eq:sqLB} are nonzero and that $C_{n,\gt}$ is positive, because we have previously removed the case $k=n$.

To estimate the integrals in the second line of \eqref{eq:sqLB}, we will show that each $f_k$ is almost constant in $y$. Indeed, let us first write its derivative as
\begin{align}
\begin{aligned}
\frac{\partial}{\partial y} f_k(y) &= \gb \big( \bE_{n,\gb,h}|_{\hat \gd_k = y}(\gd_k^2) - \{\bE_{n,\gb,h}|_{\hat \gd_k = y}(\gd_k)\}^2 \big) = \gb \var_{n,\gb,h}|_{\hat \gd_k = y}(\gd_k)\\
&\leq \gb \bE_{n,\gb,h}|_{\hat \gd_k = y}(\gd_k^2) = \gb f_k(y). 
\end{aligned}
\end{align}
The first line shows that $f_k$ is non-decreasing in $y$ while the second line shows that $e^{-\gb y} f_k(y)$ is non-increasing in $y$. This gives in particular that $f_k(y_1)\ge e^{-\gb} f_k(y_2)$ for all $y_1, y_2\in [0,1]$. Looking back at \eqref{eq:sqLB}, we obtain
\begin{align}\label{eq:sqLowerbound2}
\begin{aligned}
\hE_{n,\gt} \left( \frac{1}{\hat\gd_k - \hE_{n,\gt} (\hat\gd_k)} \int_{\hE_{n,\gt} (\hat\gd_k)}^{\hat\gd_k} f_k(y) \, dy \right) &\geq e^{- \gb} \hat\bE_{n,\gt} (f_k(\hat\gd_k))\\
&\stackrel{\text{ \eqref{eq:eqfk} }}{=} e^{- \gb} \hat\bE_{n,\gt} \bE_{n,\gb,h} (\gd_k).
\end{aligned}
\end{align}
Taking into account \eqref{hDerivative}, \eqref{eq:deriv_F_theta}, \eqref{eq:sqLB} and \eqref{eq:sqLowerbound2} we obtain
\begin{equation}
\frac{\partial}{\partial \gt} F_n(\gb,h;\gt) \geq \gb e^{- \gb} C_{n,\gt} \Big[\frac{\partial}{\partial h} F_n(\gb,h;\gt) { - \frac1n} \Big] \geq c \gb \Big[\frac{\partial}{\partial h} F_n(\gb,h;\gt) { - \frac1n} \Big] \quad \text{ for { $\gb\in (0,1)$}},
\end{equation}
where $c = { (1/e)} \inf_{\gt \le \gt_0} \inf_{n \geq 1} C_{n,\gt}$ is positive for $\gt_0$ small enough (we shall prove this point later) and the term $1/n$ above comes from the fact that the term $k=n$ appears in \eqref{hDerivative} but not in \eqref{eq:sqLB}. 
Therefore we obtain
\beq
\frac{\partial}{\partial \gt} \tilde F_n(\gb,h;\gt) \geq c \gb \frac{\partial}{\partial h} \tilde F_n(\gb,h;\gt), \qquad \mbox{where } \tilde F_n(\gb,h;\gt) =  F_n(\gb,h;\gt) - \frac{h}{n}.
\eeq
Thus the function $g(t) := { \tilde F_n(\gb,h + c \gb \gt (1 - t);\gt t)}$, having non-negative derivative, is non-decreasing in $[0,1]$. Therefore, $g(0) \leq g(1)$, which means that
\begin{equation}
F_n(\gb,h + c \gb \gt;0) \leq F_n(\gb,h;\gt)  + \frac{c\gb\gt}{n}.
\end{equation}
Therefore, we conclude the desired result by taking the superior limit as $n \to \infty$.

We are left with proving that $\inf_{0\le \gt \le \gt_0} \inf_{n\ge 1} C_{n,\gt} >0$ for $\gt_0$ small enough. Since $\ha> 1$ we may apply Lemma \ref{lem:UniformPositive} to the renewal $\htau$ and get that
\beq
\eta := \inf_{0\le \gt \le \gt_0} \inf_{n\ge 1} \hP_\gt(n\in\htau) >0,
\eeq
where $\hP_{\gt}$ is the law of a renewal with inter-arrival distribution $\hP_\gt(\htau_1 = n) = \exp(\gt - F(\gt) n) \hK(n)$, for $n \geq 1$. Also, as we will justify it in \eqref{eq:comp.pgt.pngt}, $\hP_{n,\gt}(k\in\htau) = \hP_\gt(k\in\htau | n\in\htau)$ for $0\le k \le n$. We conclude as follows: for all $0<k<n$ and $0\le \gt \le \gt_0$,
\beq
\hE_{n,\gt}(\hat\gd_k) = \hP_\gt(k\in\htau | n\in\htau) \ge \eta^2,
\eeq
and
\beq
\ba
1 - \hE_{n,\gt}(\hat\gd_k) = \hP_\gt(k\notin\htau | n\in\htau) &\ge \hP_\gt(k-1\in\htau)\hP_\gt(\htau_1 = 2)\hP_\gt(n-k-1\in\htau)\\
&\ge e^{-2\hat F(\gt_0)}K(2)\eta^2.
\ea
\eeq
\end{proof}

\begin{proof}[Proof of Theorem~\ref{thm:smoothing}]
The proof is divided into four steps.

\smallskip

{\noindent \bf Step 1. Lower bound on $F(\gb,h)$.}
Let us cut the system into blocks of size $m \in \bbN$, namely
\begin{equation}
B_j^{(m)} = (jm,(j+1)m], \qquad j \geq 0.
\end{equation}
A block is declared to be {\it good} if on this block the environment is favorable for the polymer. More precisely, pick $a \in (0,1)$ (for the moment its precise value is irrelevant) and define (recall \eqref{eq:def_alt_part_fct}):
\begin{equation} \label{eq:defgoodblock}
\cG = \Big\{ j \geq 0 \colon Z_{jm,(j+1)m,\gb,h} \geq \exp \{a \hE_\gt(\htau_1)^{-1} F(\gb,h;\gt) m\} \Big\}, \qquad \gt > 0,
\end{equation}
where $\hP_{\gt}$ is the law of a renewal with inter-arrival distribution $\hP_\gt(\tau_1 = n) = \exp(\gt - F(\gt) n) \hK(n)$, for $n \geq 1$, and $F(\gb,h;\gt)$ is defined in \eqref{eq:defFtilt}. The factor $\hE_\gt(\htau_1)^{-1}$ in the exponential is essentially harmless and the reason for its presence will appear clearer at the end of the proof. We now consider the blocks for which both endpoints are in $\htau$, i.e.,
\begin{equation}
J = \{ j \geq 0 \colon j m \in \htau, \, (j + 1) m \in \htau\}.
\end{equation}
Then, denote by $\{\gs_k\}_{k \in \bbN}$ the elements of $J \cap \cG$, which form a renewal sequence.

Our first task is to relate $F(\gb,h)$ to the free energy associated to partition functions whose endpoints are in $J \cap \cG$. By Kingman's subadditive ergodic theorem there exists a non-negative number $\cF(\gb,h)$ such that
\begin{equation}
\cF(\gb,h) = \lim_{k\to\infty} \frac{1}{k} \hE(\log Z_{(\gs_k + 1) m}) = \sup_{k\geq1} \frac{1}{k} \hE(\log Z_{(\gs_k + 1) m}),
\end{equation}
and
\begin{equation}
\frac{1}{k} \log Z_{(\gs_k + 1) m} \stackrel{k\to\infty}{\longrightarrow} \cF(\gb,h) \qquad \text{ $\hP$-a.s.\ and in $L^1(\hP)$.}
\end{equation}
The theorem can be applied since the process $\{\hE \log Z_{(\gs_i + 1) m, (\gs_j + 1)m} \}_{i<j \in \bbN}$ is super-additive, while the other conditions are easily verified.
Moreover, by the Renewal Theorem and Proposition~\ref{pr:exist.que.free.energy},
\begin{equation}
\frac{1}{k} \log Z_{(\gs_k + 1) m} = m \frac{\gs_k + 1}{k} \frac{1}{(\gs_k + 1) m} \log Z_{(\gs_k + 1) m} \rightarrow m \hE(\gs_1) F(\gb,h), \quad k \nearrow \infty,
\end{equation}
which yields $\cF(\gb,h) = m \hE(\gs_1) F(\gb,h)$. Finally we obtain
\begin{equation} \label{eq:sm1}
m \hE(\gs_1) F(\gb,h) \geq \hE(\log Z_{(\gs_1 + 1) m,\gb,h}).
\end{equation}

\smallskip

{\noindent \bf Step 2. Lower bound on the right-hand side of \eqref{eq:sm1}.}
We now apply the following strategy: the polymer makes a large jump until $\gs_1 m$ and visits the atypical block $[\gs_1 m, (\gs_1 + 1) m]$. Using \eqref{eq:defgoodblock}, we obtain the following lower bound:
\begin{equation}
\log Z_{(\gs_1 + 1) m,\gb,h} \geq \log K(\gs_1 m) + (h + \gb) + a \hE_\gt(\htau_1)^{-1} F(\gb,h;\gt) m,
\end{equation}
and looking back at \eqref{eq:sm1}, we are left with bounding from below the quantity
\begin{equation}
\hE \log K(\gs_1 m) = \log \hat c_{K} - (1 + \ga) \log m - (1 + \ga) \hE(\log \gs_1).
\end{equation}
By Jensen's inequality, $\hE(\log \gs_1) \leq \log \hE(\gs_1)$. To bound $\hE(\gs_1)$, enumerate $J$ as an increasing sequence $(j_k)_{k\in\bbN}$ and define $R$ as the unique integer for which $\gs_1 = j_R$. By the Markov property, the random variable~$R$ has a geometric distribution with probability of success
\begin{equation} \label{eq.def.pm}
p_m := \hP \Big( Z_{m,\gb,h} \geq \exp\{a \hE_\gt(\htau_1)^{-1} F(\gb,h;\gt) m \} \, \mid \,  m \in \htau \Big).
\end{equation}
The $(j_{k+1} - j_k)_{k \geq 0}$ are i.i.d.~, with $j_0 = 0$. From Wald's equality, $\hE(\gs_1) = \hE(j_1) \hE(R) = \hE(j_1)/p_m$. Then, $\lim_{m\to\infty} \hE(j_1) = \hmu^2$ because $J$ is a renewal process on $\bbN$ with inter-arrival time $j_1$ whose mean value has inverse (by the Renewal Theorem)
\begin{equation}
\lim_{s\to\infty} \hP(s \in J) = \lim_{s\to\infty} \hP(s m \in \htau) \hP(m \in  \htau) = \hmu^{-1} \hP(m \in \htau),
\end{equation}
and the last quantity tends to $\hmu^{-2}$ as $m \to \infty$. Assume for the moment the following entropy estimate: for some constant $c > 0$,
\begin{equation} \label{eq:entropy.estimate}
p_m \geq c \, e^{- h(\hP_{m,\gt} \mid \hP) [1 + o(1)]}, \qquad m \uparrow \infty.
\end{equation}
(for the sake of clarity, we will establish the latter at the end of the proof), from which we obtain
\begin{equation}
\hE(\gs_1) \leq c^{-1} \, \hmu^2 e^{h(\hP_{m,\gt} \mid \hP) [1 + o(1)]}, \qquad m \uparrow \infty.
\end{equation}
Therefore,
\begin{equation} \label{eq:sm2}
\begin{aligned}
\hE (\log Z_{(\gs_1 + 1) m,\gb,h}) &\geq \log c_{\hK} - (1 + \ga) \log(m c^{-1} \hmu^2) + h + \gb \\
&+ a \hE_\gt(\htau_1)^{-1} F(\gb,h;\gt) m - (1 + \ga) h(\hP_{m,\gt} \mid \hP) [1 + o(1)]\\
&= m \Big( a \hE_\gt(\htau_1)^{-1} F(\gb,h;\gt) - (1 + \ga) h(\hP_{m,\gt} \mid \hP)/m + o_{m\uparrow\infty}(1) \Big).
\end{aligned}
\end{equation}
{\noindent \bf Step 3. Conclusion of the proof.}
Using \eqref{eq:sm1} at $h = h_c(\gb)$, \eqref{eq:sm2}, \eqref{eq:def.spec.rel.ent} and letting $m \uparrow \infty$, then $a \uparrow 1$, we get
\begin{equation}
F(\gb,h_c(\gb);\gt) \leq (1 + \ga) \hE_\gt(\htau_1) h_\infty(\gt).
\end{equation}
One can check that \eqref{ReturnTimeDistributions} implies $\hK(n)^2 \leq \hK(n-1) \hK(n+1)$ for $n \geq 2$, so $\{\hK(n)\}_{n\in\bbN}$ is log-convex. Therefore, we may apply Lemma~\ref{lem:comp.tilt.shift}, along with Lemma~\ref{lem:spec.rel.ent.tilt} and the fact that $\hE_\gt(\htau_1) \rightarrow \hmu$ as $\gt \to 0$, to get the desired estimate.

\smallskip

{\noindent \bf Step 4. Proof of \eqref{eq:entropy.estimate}.} Using the fact that for any two probability measures $\mu,\nu$ on the same space and event $E$ with $\nu(E) > 0$ it holds 
\beq
\log\{\mu(E)/\nu(E)\} \geq -(h(\nu \mid \mu) + e^{-1})/\nu(E)
\eeq
\cite[Equation (A.13)]{Gi2007}, it is actually enough to prove that
\begin{equation} \label{eq:cond.entro.est}
\lim_{m\to\infty} \hP_{m,\gt} \Big(Z_{m,\gb,h} \geq \exp\{a \hE_\gt(\htau_1)^{-1} F(\gb,h;\gt)m\} \Big) = 1.
\end{equation}
For the proof of \eqref{eq:cond.entro.est}, the idea is to compute this limit when $\hP_{m,\gt}$ is replaced by $\hP_\gt$ and then relate the two measures. The crucial observation is that for all bounded measurable functions $\phi$,
\begin{equation} \label{eq:comp.pgt.pngt}
\hE_{n,\gt} [\phi(\hat \gd_1,\ldots,\hat \gd_n)] = \frac{1}{\hP_\gt(n \in \htau)} \hE_\gt[\phi(\hat \gd_1,\ldots,\hat \gd_n) \hat \gd_n] = \hE_\gt[\phi(\hat \gd_1,\ldots,\hat \gd_n) \, \mid \, n \in \htau].
\end{equation}
Using the same arguments as in Proposition~\ref{pr:exist.que.free.energy}, we get
\begin{equation} \label{eq:conv.to.olF}
\hP_\gt \text{-a.s.}, \quad \frac{1}{n} \log Z_n \stackrel{n\to\infty}{\longrightarrow} \overline F(\gb,h;\gt) := \varlimsup_{n\to\infty} \frac{1}{n} \hE_\gt(\log Z_n).
\end{equation}
Moreover,
\begin{equation} \label{eq:comp.olF.F}
\overline F(\gb,h;\gt) \geq \hE_\gt(\htau_1)^{-1} F(\gb,h;\gt).
\end{equation}
Indeed, by \eqref{eq:comp.pgt.pngt},
\begin{equation}
\begin{aligned}
\hE_\gt[\log Z_n] &= \hE_\gt[(\log Z_n) \hat \gd_n] + \hE_\gt[(\log Z_n) (1 - \hat \gd_n)]\\
&\geq \hP_\gt(n \in \htau) \hE_{n,\gt}(\log Z_n) - \hE_\gt[(\log Z_n)_-]\\
&\geq \hP_\gt(n \in \htau) \hE_{n,\gt}(\log Z_n) + \log K(n) - h_- \quad \text{ since } Z_n \geq K(n)e^h,
\end{aligned}
\end{equation}
and \eqref{eq:comp.olF.F} follows by considering the superior limit and using the Renewal Theorem. Finally, by using \eqref{eq:comp.pgt.pngt} one more time, we get that
\begin{equation}
\begin{aligned}
&\hP_{m,\gt} \Big(Z_{m,\gb,h} \geq \exp\{a \hE_\gt(\htau_1)^{-1} F(\gb,h;\gt) m\} \Big)\\
&= \frac{1}{\hP_\gt(m \in \htau)} \hP_{\gt} \Big(Z_{m,\gb,h} \geq \exp\{a \hE_\gt(\htau_1)^{-1} F(\gb,h;\gt) m\}, \, m \in \htau \Big),
\end{aligned}
\end{equation}
which goes to $1$ as $m \to \infty$ by virtue of \eqref{eq:conv.to.olF} and \eqref{eq:comp.olF.F} and since $a<1$. This completes the proof.
\end{proof}

\begin{proof}[Proof of Proposition~\ref{LargeBetaRelevance}]
We follow Section~6.2 from~\cite{Gi2011}. We fix $h > 0$ and $\tfrac{1}{1 + \ga} < \gga \leq 1$. Then $\Sigma_\gga := \sum_{n=1}^\infty K(n)^\gga < \infty$. Let $K^{(\gga)}$ be the renewal function defined by $K^{(\gga)}(n) = K(n)^\gga / \Sigma_\gga$ for all $n \in \bbN$ and, for $s < 0$, let $K^{(\gga)}_s$ be the renewal function derived from $K^{(\gga)}$ according to the prescription in \eqref{hTransRenewal}. The $\gga$-th power of the quenched partition function at $h^a_c(\gb) + h$ is bounded as follows:
\begin{equation}
\begin{aligned}
Z_{n,\gb,h_c^a + h}^\gga &\leq \sum_{r=1}^n \sum_{0 = \ell_0 < \ell_1 < \cdots < \ell_r = n} \prod_{i=1}^r K(\ell_i - \ell_{i-1})^\gga e^{\gga (\gb \hat \gd_{\ell_i} + h + h_c^a)}\\
&= \sum_{r=1}^n \sum_{0 = \ell_0 < \ell_1 < \cdots < \ell_r = n} \prod_{i=1}^r K^{(\gga)}(\ell_i - \ell_{i-1}) e^{\gga (\gb \hat \gd_{\ell_i} + h + h_c^a) + \log \Sigma_\gga}\\
& =: \bE_{{K^{(\gga)}_\eta}} \Big( e^{\gga \gb \sum_{k=1}^n \hat \gd_k \gd_k} \Big),
\end{aligned}
\end{equation}
where $\eta = \eta(\gga,h) := \gga (h + h^a_c) + \log \Sigma_\gga$ is negative provided $h$ is close enough to $0$ and $\gb$ is large enough. Consequently, by Lemma~\ref{lem:acc}, $\hat \bE(Z_{n,\gb,h_c^a + h}^\gga)$ does not grow exponentially in $n$ if
\begin{equation} \label{SubexpCondition}
\gb \gga + \log p_\gga(\eta(\gga,h)) \leq 0,
\end{equation}
where $p_\gga$ is the function defined in \eqref{eq:rel_p_I} when the law of the renewal $\tau$ is determined by $K^{(\gga)}_\eta$. In other words,
\begin{equation} 
p_\gga(\eta(\gga,h)) = 1 - I(\gga,h)^{-1}, \text{ with } I(\gga,h) = \hat \bE \bE_{{K^{(\gga)}_\eta}}(|\tilde \tau|) = \sum_{n=0}^\infty \p_{{K^{(\gga)}_\eta}}(n \in \tau) \hP(n \in \htau).
\end{equation}
(Note that here $\htau$ is not the stationary version.) Let $a(\gga,h) := \log p_\gga(\eta(\gga,h))$. Since $\gb + a(1,0) = 0$ (by our characterization of $h_c^a(\gb)$) and $a(\gga,h)$ is continuous in $h$, there exists a $\gga \in (0,1)$ so that relation \eqref{SubexpCondition} holds for some $h > 0$ if
\begin{equation}\label{DerivativeIneq}
\gb + \partial_\gga a(1,0) > 0.
\end{equation}
Using $I(1,0) =\cI(h_a^c)=(1 - e^{- \gb})^{-1}$, we compute 
\begin{equation}
\partial_\gga a(1,0) = (e^\gb + e^{- \gb} - 2) \partial_\gga I(1,0),
\end{equation}
and
\begin{equation}
\partial_\gga I(1,0) = \sum_{n=1}^\infty \hP(n \in \htau) \partial_\gga|_{\gga=1} \bP_{K^{(\gga)}_{\eta(\gga, 0)}}(n \in \tau),
\end{equation}
and
\begin{equation}
\begin{aligned}
&\partial_\gga|_{\gga=1} \p_{{K^{(\gga)}_\eta}} (n \in \tau)\\
&= \sum_{k=1}^n e^{k h_c^a} \sum_{0 = \ell_0 < \ell_1 < \cdots < \ell_k = n} \bigg( k h_c^a + \sum_{i=1}^k \log K(\ell_i - \ell_{i-1}) \bigg) \prod_{i=1}^k K(\ell_i - \ell_{i-1}).
\end{aligned}
\end{equation}
A consequence of Proposition~\ref{pr:ann.crit.curve} and \eqref{eq:cI} is that
\begin{equation}\label{hcAsymptotics}
e^{h_c^a + \gb} = \frac{1}{\bP \times \hP (\tau_1 \in \htau)} + O(e^{- \gb})
\end{equation}
as $\gb \to \infty$, so that in the previous sum the dominant term as $\gb \to \infty$ is the one with $k = 1$, and in fact we can check that
\begin{equation} \label{eq:partialIgamma_aux}
\partial_\gga I(1,0) = e^{h_c^a} \sum_{n=1}^\infty  \hP(n \in \htau) K(n) \{h_c^a + \log K(n)\} + O(\gb e^{- 2 \gb}).
\end{equation}
Indeed, we may write
\begin{equation}
\Big| \partial_\gga I(1,0) -  e^{h_c^a} \sum_{n=1}^\infty \hP(n \in \htau) K(n) \big( h_c^a + \log K(n) \big) \Big| \leq (I) + (II),
\end{equation}
where
\begin{equation}
(I) = \sum_{n=1}^\infty \hP(n \in \htau) \sum_{k\geq2} e^{k h_c^a} k |h_c^a| \sum_{0 = \ell_0 < \ell_1 < \cdots < \ell_k = n} \prod_{i=1}^k K(\ell_i - \ell_{i-1}),
\end{equation}
and
\begin{equation}
(II) = \sum_{n=1}^\infty \hP(n \in \htau) \sum_{k\geq2} e^{k h_c^a} \sum_{0 = \ell_0 < \ell_1 < \cdots < \ell_k = n} \Big( \sum_{i=1}^k - \log K(\ell_i - \ell_{i-1}) \Big) \prod_{i=1}^k K(\ell_i - \ell_{i-1}).
\end{equation}
By interchanging the sums in $k$ and $n$ and bounding $\hP(n \in \htau)$ by one, we get
\begin{equation}
(I) \leq \sum_{k\geq2} e^{k h_c^a} k |h_c^a| = O(\gb e^{- 2 \gb}) \quad \text{ by~\eqref{hcAsymptotics}},
\end{equation}
and
\begin{equation}
(II) \leq (1 - e^{h_c^a})^{-1}\ \bE \Big[ \sum_{1 \leq i \leq N_\gb} (- \log K(T_i)) \ind_{\{N_\gb \geq 2\}} \Big],
\end{equation}
where $N_\gb$ is a geometric random variable with parameter $1 - e^{h_c^a}$, independent from $\tau$, and the $T_i$'s are the increments of $\tau$. Therefore,
\begin{equation}
(II) \leq \text{(cst)}\ \bE(-\log K(T_1)) \times \bE(N_\gb \ind_{\{N_\gb \geq 2\}}) = O(e^{- 2 \gb}), 
\end{equation}
again by~\eqref{hcAsymptotics}.
This settles \eqref{eq:partialIgamma_aux}.

\noindent Since $e^\gb + e^{- \gb} - 2 = e^{\gb} (1 + O(e^{- \gb}))$, the left hand side of \eqref{DerivativeIneq} equals 
\begin{equation} \label{DerivativeIneq2}
\gb + [1+O(e^{-\gb})]\Big[e^{\gb + h_c^a} \Big\{ h_c^a \bP \times \hP(\tau_1 \in \htau) + \sum_{n=1}^\infty \hP(n \in \htau) K(n) \log K(n)\Big\}  + O(\gb e^{-\gb})\Big].
\end{equation}
Now using \eqref{hcAsymptotics} and its consequence, $h_c^a + \gb = - \log \bP \times \hP(\tau_1 \in \htau) + O(e^{- \gb})$, we have that the previous quantity equals
\begin{equation}
- \log \bP \times \hP(\tau_1 \in \htau) + \frac{1}{\bP \times \hP(\tau_1 \in \htau)} \sum_{n=1}^\infty \hP(n \in \htau) K(n) \log K(n) + O(\gb e^{-\gb}),
\end{equation}
and the main claim of the proposition follows.

To prove the assertion about the case of large $\ga$ we will prove that
\begin{align}
&\lim_{\ga\to\infty} \bP \times \hP(\tau_1 \in \htau) = \hK(1), \label{lim1largegb}\\
&\lim_{\ga\to\infty} \sum_{n=1}^\infty \hP(n \in \htau) K_\ga(n) \log K_\ga(n) = 0, \label{lim2largegb}
\end{align}
and note that $\hK(1) \in (0,1)$. For all $\ga > 0$ the quantity $c_\ga := 1/\zeta(1 + \ga)$ is less than 1 and its limit as $\ga \to \infty$ is 1. Also $\lim_{\ga\to\infty} K_\ga(n) = \lim_{\ga\to\infty} c_\ga n^{- 1 - \ga} = \ind_{\{n = 1\}}$ for all $n \in \bbN$. Then in the sum  
\begin{equation}
\bP \times \hP(\tau_1 \in \htau) = \sum_{n=1}^\infty K_\ga(n) \hP(n \in \htau),
\end{equation}
as $\ga \to \infty$, the first term converges to $\hK(1)$, the second to zero, while the rest of the sum converges to zero also as it is bounded by $\int_2^\infty x^{- \ga -1}\,dx < \infty$. Thus \eqref{lim1largegb} follows. For \eqref{lim2largegb} we write
\begin{equation}
\sum_{n=1}^\infty \hP(n \in \htau) K_\ga(n) \log K_\ga(n) = c_\ga \log c_\ga \sum_{n=1}^\infty \hP(n \in \htau) \frac{1}{n^{1 + \ga}} - c_\ga \sum_{n=2}^\infty \hP(n \in \htau) \frac{(1 + \ga) \log n}{n^{1 + \ga}}.
\end{equation}
As $\ga \to \infty$, the first sum goes to $\hK(1)$, while $\lim_{\ga\to\infty} c_\ga \log c_\ga = 0$. In the second sum, the $n = 2$ term converges to zero as $\ga \to \infty$, and the same is true for the rest of the sum because it is bounded by $(1 + \ga) \int_2^\infty x^{- 1 - \ga} \log x\, \dd x < \infty$.
\end{proof}

\subsection{Irrelevance. Proof of Theorem~\ref{thm:irrelevance}}

The proof of Theorem~\ref{thm:irrelevance}, which can be found at the end of this section, essentially relies on Lemma~\ref{lem:second_moment} below, that is a control on the second moment of the partition function at the annealed critical point. Second moment methods and replica arguments have been used in the i.i.d.~case in~\cite{T08}. The extra difficulty in our context is to deal with correlations, which we tackle by means of a decoupling inequality, see \eqref{eq:UBhatU}.
\begin{lemma} \label{lem:second_moment}
Suppose $\ha > 2$ and $\ga < 1/2$. Then, for $\gb$ small enough,
\begin{equation}
\sup_{n\geq1} \hE[(Z_{n,\gb,h_c^a})^2] < \infty.
\end{equation}
\end{lemma}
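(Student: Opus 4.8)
The plan is to compute the second moment by a replica expansion in terms of \emph{two} independent copies of $\tau$, call them $\tau^{(1)},\tau^{(2)}$, together with the single disorder renewal $\htau$. First I would write
\[
\hE[(Z_{n,\gb,h_c^a})^2] = \bE^{\otimes 2}\hE\Big( e^{\sum_{k=1}^n (h_c^a + \gb\hat\gd_k)(\gd_k^{(1)} + \gd_k^{(2)})} \gd_n^{(1)}\gd_n^{(2)} \Big),
\]
and then take the expectation over $\htau$ inside. Unlike the i.i.d.\ case, $\hE(e^{\gb\hat\gd_k(\gd_k^{(1)}+\gd_k^{(2)})})$ does not factorize over $k$ because of the correlations in $\htau$. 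The key device — announced in the excerpt as the decoupling inequality \eqref{eq:UBhatU} — is to bound this disorder average by a product over the points of $\tau^{(1)}\cap\tau^{(2)}$ of a local factor, using the renewal structure of $\htau$ (summable correlations since $\ha>2$, so that conditioning $\htau$ on containing one point perturbs the law near another point only by a summable amount). Concretely I expect an inequality of the form
\[
\hE\Big( e^{\gb\sum_{k=1}^n \hat\gd_k(\gd_k^{(1)}+\gd_k^{(2)})}\Big) \le \prod_{k\in\tau^{(1)}\cap\tau^{(2)}\cap[1,n]} (1 + C(\gb)),
\]
possibly after separating the "diagonal" contributions (points in $\tau^{(1)}\cap\tau^{(2)}$) from the "off-diagonal" ones (points in exactly one of $\tau^{(1)},\tau^{(2)}$), the latter being handled by the definition of $h_c^a$.

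Next I would reorganize the resulting bound as a partition function for the intersection renewal $\hat\tau^{\cap}:=\tau^{(1)}\cap\tau^{(2)}$, which is a renewal with loop exponent governed by $2\ga$ (since $\ga<1/2$ this intersection renewal is transient, because $\sum_n \bP(n\in\tau^{(1)})^2 \asymp \sum_n n^{2(\ga-1)}<\infty$). Using the choice $h=h_c^a(\gb)$, the "single-copy" weights $e^{h_c^a+\gb\hat\gd_k}$ along points that are in only one of the two copies sum up, after the $\htau$-average, precisely to make each single-copy excursion subcritical — this is exactly the content of Lemma~\ref{lem:acc} and the characterization of $h_c^a$ in Proposition~\ref{pr:ann.crit.curve}. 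What remains is a renewal-type sum of the form
\[
\sum_{j\ge0} \big(C(\gb)\,\bE^{\otimes2}\hE(\text{first return of }\tilde\tau^{\cap})\big)^j,
\]
and the claim $\sup_n \hE[(Z_{n,\gb,h_c^a})^2]<\infty$ follows provided the effective per-renewal weight is $<1$. Since the relevant intersection renewal is transient when $\ga<1/2$ (its Green's function is finite) and the extra factor $C(\gb)\to 0$ as $\gb\to0$, one can choose $\gb$ small enough so that this geometric series converges uniformly in $n$.

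The main obstacle is establishing the decoupling inequality \eqref{eq:UBhatU} with the right quantitative dependence on $\gb$: one must show that the disorder average $\hE(e^{\gb\sum_k \hat\gd_k(\gd_k^{(1)}+\gd_k^{(2)})})$ — which couples arbitrarily many points of $\htau$ — can be dominated by a product of local factors each equal to $1+O(\gb)$ (or $1+o_\gb(1)$) \emph{up to} a correction absorbed by the subcriticality of the single-copy excursions, and that this requires only $\ha>2$, i.e.\ summability of the covariances \eqref{eq:speed_corr}. I would prove it by conditioning $\htau$ successively on its behaviour between consecutive points of $\tau^{(1)}\cap\tau^{(2)}$ and using the renewal Markov property of $\htau$ together with the renewal convergence estimate \eqref{eq:ren.conv.est}; the summability $\sum_n (n)^{1-\ha}<\infty$ for $\ha>2$ is what keeps the accumulated correction bounded. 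Once the decoupling is in place, the remaining steps are standard renewal-theoretic estimates combined with Proposition~\ref{meanReturns} / Proposition~\ref{RMassFunctionProp} and the transience of the doubled renewal for $\ga<1/2$.
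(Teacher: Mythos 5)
Your overall architecture (replica expansion, a decoupling of the disorder average using summability of the correlations for $\ha>2$, and convergence of an effective renewal series thanks to transience of the doubled renewal when $\ga<1/2$, with smallness in $\gb$) is the same strategy as the paper's, but the central step as you state it has a genuine gap. The inequality you announce,
\begin{equation*}
\hE\Big( e^{\gb\sum_{k=1}^n \hat\gd_k(\gd_k^{(1)}+\gd_k^{(2)})}\Big) \le \prod_{k\in\tau^{(1)}\cap\tau^{(2)}\cap[1,n]} (1 + C(\gb)),
\end{equation*}
cannot hold even ``after separating the diagonal from the off-diagonal contributions'': because $\htau$ is correlated, the disorder average does not factorize into single-copy (annealed) weights times a correction supported on $\tau^{(1)}\cap\tau^{(2)}$. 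The paper instead expands $e^{\gb\gd\hat\gd}=1+z\gd\hat\gd$ and reduces everything to the ratio $\hU(I\cup J)/(\hU(I)\hU(J))$ as in \eqref{eq:rwt.sec.mom}; the decoupling inequality \eqref{eq:UBhatU} then produces a factor $1+r(\gD_m)$ for \emph{every gap} between consecutive interleaved points of the two replica sets $I$ and $J$, not only for coincidence points. The number of such gaps is in no way controlled by $|I\cap J|$, so your plan of reorganizing the bound as a renewal series over $\tau^{(1)}\cap\tau^{(2)}$ with a per-point factor $1+O(\gb)$ does not go through as written. Controlling the accumulated correction $\exp\{\sum_{i\ge0} r(i)\,\#\{m\colon \gD_m=i\}\}$ is where the real work lies: the paper first uses the boundedness of the annealed partition function at $h_c^a$ (Lemma~\ref{lem:bdd.ann.pf.crit}) to turn the annealed weights into a probability law $\bbP$ on finite subsets, then proves a stochastic domination $\#\{m\colon\gD_m=i\}\le Y_i^{(1)}+Y_i^{(2)}$ with each $Y_i^{(j)}\preceq 1+|I\cap J|$, and combines it with H\"older's inequality to reduce everything to the case $i=0$, i.e.\ to $|I\cap J|$.

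Two further points are glossed over in your sketch. First, even after this reduction, the quantity one must bound is an effective homogeneous pinning partition function for $\tilde\tau\cap\tilde\tau'$ (the intersection of two independent copies of $\tau\cap\htau$, not of $\tau^{(1)}\cap\tau^{(2)}$ alone) with an extra reward $\bargb\sim C\gb^2$, and the needed condition is $e^{\bargb}<1/\tilde\bP^{\otimes2}_{\gb,h_c^a(\gb)}((\tilde\tau\cap\tilde\tau')_1<\infty)$. Second, your claim that ``one can choose $\gb$ small enough'' requires showing that this return probability converges, as $\gb\searrow0$, to $\bP^{\otimes2}\hP^{\otimes2}((\tau\cap\tau'\cap\htau\cap\htau')_1<\infty)\le\bP^{\otimes2}((\tau\cap\tau')_1<\infty)<1$; this continuity in $\gb$ is not soft (Portmanteau does not apply and Fatou goes the wrong way), and the paper devotes a separate Fourier-analytic argument (Step~6 of its proof) to it. Your heuristic for why $\ha>2$ suffices (summability of $r(i)=O(i^{1-\ha})$ via \eqref{eq:ren.conv.est}) is correct, but the proposal is missing the cluster expansion, the gap structure of the decoupling, and the stochastic-domination/H\"older reduction that make the argument close.
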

\begin{proof}[Proof of Lemma~\ref{lem:second_moment}] The proof is split into several steps. In Step~0, we change the partition function to a slightly modified version, which turns out to be more convenient in our context. In Step~1, we provide alternative expressions for the first and second moments of the partition function, using a cluster (or Mayer) expansion. In Step~2, we prove the decoupling inequality in \eqref{eq:UBhatU}, which we use in Step~3 to bound the second moment from above, uniformly in the size of the polymer. In Step~4 we prove that a simplified version of this upper bound is finite. The idea is to use the correlation decay of $\htau$ ($\ha > 2$) to reduce the problem to the usual second moment control for pinning models with i.i.d.~disorder, which in turn makes use of the transient nature of the overlap of two copies of $\tau$ ($0 < \ga < \frac{1}{2}$), see~\cite{Gi2011} and references therein. In Step~5, we conclude the main line of proof. Finally, Step~6 proves a technical point used in Step~4. Note that during this proof we will work only with free partition functions, i.e., $\gd_n$ is removed from the definitions in \eqref{eq:defquemeas}, \eqref{eq:defquepf}, \eqref{eq:defannmeas} and \eqref{eq:defannpf}.\\
\smallskip

\noindent{\bf Step~0.} During this proof we shall use the convention introduced below \eqref{eq:fullpinned}. We introduce, for $h \leq 0$,
\begin{equation}
\bar Z_{n,\gb,h} = \bE_h\Big( e^{\gb \sum_{k=1}^n \gd_k \hat \gd_k}  \Big),
\end{equation}
where $\bP_h$ is defined in \eqref{hTransRenewal}, and note that $\bar Z_{n,\gb,h}(n\in\tau) = Z_{n,\gb,h}(n\in\tau)$, as it was already observed in \eqref{eq:equ.znac}. By decomposing according to the last renewal before $n$, we get
\begin{equation}
\begin{aligned}
Z_{n,\gb,h} &= Z_{n,\gb,h}(n\in\tau) + \sum_{k=1}^n Z_{n-k,\gb,h}(n-k\in\tau) \bP(\tau_1 > k)\\
&= \bar Z_{n,\gb,h}(n\in\tau) + \sum_{k=1}^n \bar Z_{n-k,\gb,h}(n-k\in\tau) \bP(\tau_1 > k)\\
& \leq\ \bar Z_{n,\gb,h}(n\in\tau) + e^{- h} \sum_{k=1}^n \bar Z_{n-k,\gb,h}(n-k\in\tau) \bP_h(\tau_1 > k)\\
&= \bar Z_{n,\gb,h}(n\in\tau) + e^{- h} (\bar Z_{n,\gb,h} - \bar Z_{n,\gb,h}(n\in\tau))\\
&= e^{- h} \bar Z_{n,\gb,h} + (1 - e^{- h}) \bar Z_{n,\gb,h}(n\in\tau) \leq \bar Z_{n,\gb,h}.
\end{aligned}
\end{equation}
Therefore, it is enough to prove that
\begin{equation}
\sup_{n\geq1} \hE(\bar Z_{n,\gb,h_c^a}^2) < \infty.
\end{equation}
\smallskip

\noindent{\bf Step~1.} We first rewrite the first and second moments of the modified partition functions. Namely, for $h < 0$,
\begin{equation} \label{eq:rwt.first.mom}
\hE(\bar Z_{n,\gb,h}) = \bar Z_{n,\gb,h}^a = \sum_{I \subseteq [n]} z^{|I|} U_h(I) \hU(I),
\end{equation}
and
\begin{equation} \label{eq:rwt.sec.mom}
\hE(\bar Z_{n,\gb,h}^2) = \sum_{I,J \subseteq [n]} z^{|I|+|J|} U_h(I) U_h(J) \hU(I \cup J) ,
\end{equation}
where
\begin{equation} \label{eq:def.zbeta}
z = z(\gb) = e^{\gb} - 1, \qquad U_h(I) = \bP_h(I \subseteq \tau), \qquad \hU(I) = \hP(I \subseteq \htau).
\end{equation}
Let us first prove \eqref{eq:rwt.first.mom}. Since $\gd_k \hat \gd_k$ is $\{0,1\}$-valued, we may write
\begin{equation}
\begin{aligned}
\bar Z_{n,\gb,h}^a &= \hE \bE_h(e^{\gb \sum_{1 \leq k \leq n} \gd_k \hat \gd_k}) = \hE \bE_h \left( \prod_{1 \leq k \leq n} (1 + z \gd_k \hat \gd_k) \right)\\
&= \hE \bE_h \Big( \sum_{I \subseteq [n]} z^{|I|} \prod_{i \in I} \gd_i \hat \gd_i \Big),
\end{aligned}
\end{equation}
which gives \eqref{eq:rwt.first.mom} after interchanging sum and expectation. To get \eqref{eq:rwt.sec.mom}, we use the so-called replica trick and obtain
\begin{equation}
\hE(\bar Z_{n,\gb,h}^2)  = \hE \bE_h^{\otimes 2}(e^{\gb \sum_{1 \leq k \leq n} \hat \gd_k (\gd_k + \gd_k')}),
\end{equation}
where $\tau'$ is an independent copy of $\tau$. Then,
\begin{equation}
\begin{aligned}
\hE(\bar Z_{n,\gb,h}^2) &=  \hE \bE_h^{\otimes 2} \left( \prod_{1 \leq k \leq n} (1 + z \gd_k \hat \gd_k) \prod_{1 \leq \ell \leq n} (1 + z \gd_\ell \hat \gd_\ell) \right)\\
&= \hE \bE_h^{\otimes 2} \left( \sum_{I \subseteq [n]} z^{|I|} \prod_{i \in I} \gd_i \hat \gd_i \, \sum_{J \subseteq [n]} z^{|J|} \prod_{j \in J} \gd_j' \hat \gd_j \right)\\
& = \hE \bE_h^{\otimes 2} \Big( \sum_{I,J \subseteq [n]} z^{|I| + |J|} \prod_{i \in I} \gd_i \prod_{j \in J} \gd_j' \prod_{\ell \in I \cup J} \hat \gd_\ell \Big),
\end{aligned}
\end{equation}
and again the result follows by interchanging sum and expectation.\\

\smallskip

\noindent{\bf Step~2.} We now provide an upper bound on $\hU(I \cup J)$, namely
\begin{equation} \label{eq:UBhatU}
\hU(I \cup J) \leq \hU(I) \hU(J) \prod_{1 \leq m \leq g(I,J)} (1 + r(\gD_m)),
\end{equation}
where
\begin{equation}
r(i) = \sup_{j \geq i} \left| 1 - \frac{\hu(i)}{\hu(j)} \right|, \quad \hu(i) = \hP(i \in \htau), \quad i \in \bbN_0,
\end{equation}
$g(I,J)$ is the total number of gaps defined by \eqref{eq:totalgap} below and $(\gD_m)_{1 \leq m \leq g(I,J)}$ is a sequence of {\it gaps} between the sets $I$ and $J$. Informally, we say that there is a gap each time a point in $I$ (resp.\ $J$) is followed by a point in $J$ (resp.\ $I$), see also Figure \ref{fig:gaps}. We give a rigorous definition below.

\smallskip

\noindent {\it Definition of the gaps.} When one of $I, J$ is empty, we let $g(I, J)=0$. Then, the last product in \eqref{eq:UBhatU} is 1 and the inequality is obviously true (note that $\hU(\emptyset) = 1$). When $I$ and $J$ are both non-empty, we give a recursive definition of the gaps using a backward exploration, which will prove to be useful later in the proof of \eqref{eq:UBhatU}. Therefore, we first need to define the last gap of two sets $I$ and $J$ that are both non-empty, which we denote by $\gap(I,J)$. Let us write
\begin{equation}
\begin{aligned}
I &= \{i_1, i_2, \ldots, i_k\} & \text{ with } i_1 < i_2 < \ldots < i_k,\\
J &= \{j_1, j_2, \ldots, j_\ell\} & \text{ with } j_1 < j_2 < \ldots < j_\ell.
\end{aligned}
\end{equation}
If $i_k \neq j_\ell$ then w.l.o.g.~we may assume that $i_k < j_\ell$, in which case we define
\begin{equation}
\gs = \inf\{s \geq 1 \colon j_s > i_k\} \quad (\gs \leq \ell),
\end{equation}
and set
\begin{equation}
\gap(I,J) = j_\gs - i_k, \qquad p(I,J) := j_\gs.
\end{equation}
If $i_k = j_\ell$, then define $\gap(I,J) = 0$ and set $p(I,J) = i_k = j_\ell$. We may now define the sequence of gaps iteratively, as follows: start with $I_0 = I$ and $J_0 = J$ and define for $m \geq 0$,
\begin{equation} \label{eq:defgaps}
\begin{aligned}
&\gD_{m+1} = \gap(I_m, J_m),\\
&I_{m+1} = I_m \setminus [p(I_m,J_m),\infty),\\
&J_{m+1} = J_m \setminus [p(I_m,J_m),\infty),
\end{aligned}
\end{equation}
taking care that the iteration only makes sense until
\begin{equation} \label{eq:totalgap}
g(I,J) = \inf\{m \geq 1 \colon I_{m} = \emptyset \text{ or } J_{m} = \emptyset\},
\end{equation}
which is the total number of gaps.\\
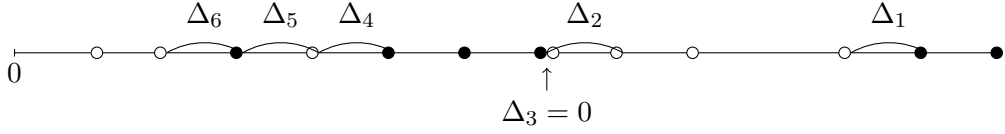
\begin{figure}
\begin{tikzpicture}
\draw (0,0) node[below]{$0$}-- (1,0) ;
\draw (0,-0.05)--(0,0.05);
\draw[o-o] (1,0) -- (2,0) ;
\draw[-*] (2,0) -- (3,0) ;
\draw[-o] (3,0) -- (4,0) ;
\draw[-*] (4,0) -- (5,0) ;
\draw[-*] (5,0) -- (6,0) ;
\draw[-*] (6,0) -- (7,0) ;
\draw[o-o] (7,0) -- (8,0) ;
\draw[-o] (8,0) -- (9,0) ;
\draw[-o] (9,0) -- (11,0) ;
\draw[-*] (11,0) -- (12,0) ;
\draw[-*] (12,0) -- (13,0) ;
\draw (3,0) arc (60:120:1);
\draw (2.5,0.5) node{$\gD_6$};
\draw (4,0) arc (60:120:1);
\draw (3.5,0.5) node{$\gD_5$};
\draw (5,0) arc (60:120:1);
\draw (4.5,0.5) node{$\gD_4$};
\draw (8,0) arc (60:120:1);
\draw (7.5,0.5) node{$\gD_2$};
\draw (12,0) arc (60:120:1);
\draw (11.5,0.5) node{$\gD_1$};
\draw[->] (7,-0.5) node[below]{$\gD_3=0$}--(7,-0.2);
\end{tikzpicture}
\caption{A sequence of gaps, as defined in \eqref{eq:defgaps}. The white and black dots are the points in $I$ and $J$, respectively.}\label{fig:gaps}
\end{figure}

\noindent {\it Proof of the decoupling inequality \eqref{eq:UBhatU}.} Inequality \eqref{eq:UBhatU} follows by iterating the following inequality:
\begin{equation}
\frac{\hU(I \cup J)}{\hU(I) \hU(J)} \leq \frac{\hU(I' \cup J')}{\hU(I') \hU(J')}[1 + r(\gap(I,J))], \qquad \text{ where }
\left\{
\begin{array}{ll}
I' &= I \setminus [p(I,J),\infty),\\
J' &= J \setminus [p(I,J),\infty).
\end{array}
\right.
\end{equation}
Indeed, if $i_k < j_\ell$ (using the notations above), then,
\begin{equation}
\begin{aligned}
\hU(I \cup J) &= \hU(I' \cup J') \hu(j_\sigma - i_k) \prod_{\sigma \leq s < \ell} \hu(j_{s+1}-j_s),\\
\hU(I)\hU(J) &= \hU(I')\hU(J') \hu(j_\sigma - j_{\sigma-1}) \prod_{\sigma \leq s < \ell} \hu(j_{s+1}-j_s).
\end{aligned}
\end{equation}
Therefore,
\begin{equation}
\frac{\hU(I \cup J)}{\hU(I) \hU(J)} \times \left( \frac{\hU(I' \cup J')}{\hU(I') \hU(J')} \right)^{-1} = \frac{\hu(j_\sigma - i_k)}{\hu(j_\sigma - j_{\sigma-1})} \leq 1 + r(j_\sigma - i_k) = 1 + r(\gap(I,J)).
\end{equation}
The case $j_\ell < i_k$ is similar. If $i_k = j_\ell$, we have (assume w.l.o.g.\ that $j_{\ell-1} \leq i_{k-1}$)
\begin{equation}
\hU(I \cup J) = \hU(I' \cup J') \hu(i_k - i_{k-1}), \quad \hU(I) \hU(J) = \hU(I') \hU(J') \hu(i_k - i_{k-1}) \hu(j_\ell - j_{\ell-1}),
\end{equation}
from which we get
\begin{equation}
\frac{\hU(I \cup J)}{\hU(I) \hU(J)} \times \left( \frac{\hU(I' \cup J')}{\hU(I') \hU(J')} \right)^{-1} = \frac{1}{\hu(j_\ell - j_{\ell-1})} \leq 1 + r(0) = 1 + r(\gap(I,J)),
\end{equation}
and \eqref{eq:UBhatU} is proved.\\

\smallskip

\noindent{\bf Step~3.} Recall \eqref{eq:rwt.first.mom}. From Lemma~\ref{lem:bdd.ann.pf.crit} below, we have
\begin{equation} \label{eq:def.s}
s := \sup_{n \geq 1} \bar Z^a_{n,\gb,h_c^a(\gb)} = \sum_{I \subseteq \bbN, |I| < \infty} z^{|I|} U_{h_c^a(\gb)}(I) \hU(I) \leq 2,
\end{equation}
which allows us to define a probability measure on $\{I\in\cP(\bbN)\colon |I|<\infty\}$, which we denote by $ \bbP$. Namely,
\begin{equation} \label{eq:def.doubleP}
\bbP(\{I\}) = s^{-1} z^{|I|} U_{h_c^a(\gb)}(I) \hU(I) \quad \text{ for all $I \subseteq \bbN$ s.t. } |I| < \infty.
\end{equation}

\smallskip

\noindent {\it Upper bound on the second moment.} Using \eqref{eq:rwt.sec.mom}, \eqref{eq:UBhatU}, \eqref{eq:def.s} and \eqref{eq:def.doubleP} we may write
\begin{equation} \label{eq:UB.sec.mom}
\begin{aligned}
\sup_{n \geq 1} \hE(\bar Z_{n,\gb,h_c^a(\gb)}^2) &\leq 4 \bbE^{\otimes 2} \Big( \prod_{1 \leq m \leq g(I,J)} (1 + r(\gD_m)) \Big)\\
&\leq 4 \bbE^{\otimes 2} \left( e^{\sum_{1 \leq m \leq g(I,J)} r(\gD_m)} \right)\\
&= 4 \bbE^{\otimes 2} \left( e^{\sum_{i \geq 0} r(i) \#\{m \geq 1 \colon \gD_m = i\}} \right),
\end{aligned}
\end{equation}
where the gaps $(\gD_m)_{1 \leq m \leq g(I,J)}$ are associated to two sets $I$ and $J$ drawn independently from $\bbP$.\\

\smallskip

\noindent{\bf Step~4.} This is an intermediate step to control the right hand side of \eqref{eq:UB.sec.mom}. If the sets $\{m \geq 1 \colon \gD_m = i\}$ therein were all replaced by $\{m \geq 1 \colon \gD_m = 0\}$, it would be enough to control the following quantity:
\begin{equation} \label{eq:def.cZ}
\cZ := \bbE \left( C^{\#\{m \geq 1 \colon \gD_m = 0\}  } \right) \text{ with }\, C := e^{2 \sum_{i\geq 0} r(i)}.
\end{equation}
We prove in this step that $\cZ$ is finite if $\gb$ is small (recall that $\bbP$ depends on $z = z(\gb)$). Note that $C$ is finite because $\ha > 2$ and $r(i) = O(i^{1 - \ha})$. Indeed, for $i \geq 0$,
\begin{equation}
r(i) = \sup_{j\geq i} \Big| \frac{\hu(j) - \hu(i)}{\hu(j)} \Big| \leq \frac{2}{\inf_{j\geq1} \hu(j)} \times \sup_{j\geq i} \Big| \hu(j) - \frac{1}{\hmu} \Big|,
\end{equation}
which is $O(i^{1 - \ha})$, by \eqref{eq:ren.conv.est}. Then, observe that
\begin{equation}
\#\{m \geq 1 \colon \gD_m = 0\} = |I \cap J|,
\end{equation}
which yields $\cZ = \sup_{n\geq1} \cZ^{(n)}$ with
\begin{equation}
\cZ^{(n)} := s^{- 2} \sum_{I,J\subseteq [n]} z^{|I| + |J|} C^{|I \cap J|} \tilde U_{h_c^a(\gb)}(I) \tilde U_{h_c^a(\gb)}(J),
\end{equation}
where $\tilde U_h(I) = U_h(I) \hU(I)$ is the renewal mass function of $\tilde\tau = \tau \cap \htau$ under the transient renewal process $\tilde \bP_h = \bP_h \times \hP$ ($h < 0$). Using that $|I \cap J| = |I| + |J| - |I \cup J|$, we get
\begin{equation}
\begin{aligned}
\cZ^{(n)} &= \tilde\bE_h^{\otimes 2} \sum_{I,J\subseteq [n]} \Big( \prod_{i\in I} C z \tilde \gd_i \Big) \Big( \prod_{j\in J} C z \tilde \gd_j' \Big) \Big( \frac{1}{C} \Big)^{|I \cup J|}\\
&= \bE_X \tilde \bE_h^{\otimes 2} \sum_{I,J\subseteq [n]} \Big( \prod_{i\in I} C z \tilde \gd_i X_i \Big)  \Big( \prod_{j\in J} C z \tilde \gd_j' X_j \Big),
\end{aligned}
\end{equation}
where $\tilde \tau'$ refers to an independent copy of $\tilde \tau$ and the $X_i$'s are independent Bernoulli random variables with parameter $1/C$ (it is clear from \eqref{eq:def.cZ} that $C > 1$).
Since the $\tilde \gd_i X_i$'s and the $\tilde\gd_j' X_j$'s are $\{0,1\}$-valued, we may write
\begin{equation}
\cZ^{(n)} = \bE_X \tilde \bE_h^{\otimes 2} \Big[ (1 + C z)^{\sum_{1\leq k\leq n} X_k (\tilde \gd_k + \tilde \gd_k')} \Big].
\end{equation}
Integrating over $X$, we get:
\begin{equation}
\cZ^{(n)} = \tilde \bE_h^{\otimes 2} \prod_{1\leq k \leq n} \Big[ 1 + \frac{1}{C} \Big( (1 + C z)^{\tilde \gd_k + \tilde \gd_k'}  - 1 \Big) \Big].
\end{equation}
Using that
\begin{equation}
(1 + C z)^{\tilde \gd_k + \tilde \gd_k'} = (1 + C z \tilde \gd_k)(1 + C z \tilde \gd_k') = 1 + C z (\tilde \gd_k + \tilde \gd_k') + (C z)^2 \tilde \gd_k \tilde \gd_k',
\end{equation}
we obtain that
\begin{equation} \label{eq:UBcZ_aux}
\begin{aligned}
\cZ^{(n)} &= \tilde \bE_h^{\otimes 2} \prod_{1\leq k\leq n} \Big[ 1 + z (\tilde \gd_k + \tilde \gd_k') + C z^2 \tilde \gd_k \tilde \gd_k' \Big]\\
&\leq \tilde \bE_h^{\otimes 2} \prod_{1\leq k\leq n} (1 + z)^{\tilde \gd_k + \tilde \gd_k'} (1 + C z^2)^{\tilde \gd_k \tilde \gd_k'}\\
&= \tilde \bE_h^{\otimes 2} \Big[ e^{\gb \sum_{1\leq k\leq n} (\tilde \gd_k + \tilde \gd_k') + \bar\gb \sum_{1\leq k\leq n} \tilde \gd_k \tilde \gd_k'} \Big],
\end{aligned}
\end{equation}
where (recall \eqref{eq:def.zbeta})
\begin{equation}
\bar\gb = \log(1 + C z^2) \sim C z^2 \sim C \gb^2 \qquad \text{ as } \gb \searrow 0.
\end{equation}
From \eqref{eq:rel_p_I} and Proposition \ref{pr:ann.crit.curve},
\beq
e^\gb \tilde \bP_h (\tilde\tau_1 < \infty) = e^\gb \bP_h\hP (\tilde\tau_1 < \infty)
\left\{
\begin{array}{ll}
<1 & \mbox{if  } h < h_c^a(\gb),\\
= 1 & \mbox{if } h = h_c^a(\gb).
\end{array}
\right.
\eeq
Consequently, the relation
\begin{equation}
\tilde \bP_{\gb,h}(\tilde \tau_1 = n) := e^\gb \tilde \bP_h(\tilde \tau_1 = n) = e^\gb \bP_h \hP( (\tau \cap \htau)_1 = n),\quad h \leq h_c^a(\gb),
\end{equation}
defines a renewal which is recurrent when $h = h_c^a(\gb)$ and transient when $h < h_c^a(\gb)$.
Therefore, we get from \eqref{eq:UBcZ_aux}
\begin{equation} \label{eq:ub.cZn}
\cZ^{(n)} \leq \tilde \bE_{\gb,h}^{\otimes 2} \Big[ e^{\bar\gb \sum_{1\leq k\leq n} \tilde \gd_k \tilde \gd_k' } \Big].
\end{equation}
It is now a standard result about homogeneous pinning models (Theorem 2.7 and the relation (2.13) in~\cite{Gi2011}) that the right hand side of \eqref{eq:ub.cZn} remains bounded provided that
\begin{equation}
e^{\bar\gb} < \frac{1}{\tilde \bP^{\otimes 2}_{\gb,h}((\tilde \tau \cap \tilde \tau')_1 < \infty)}.
\end{equation}
But this is satisfied when $h = h_c^a(\gb)$ and $\gb$ is small enough since, as $\gb \searrow 0$, $\exp(\bar\gb)$ converges to $1$ and
\begin{equation} \label{eq:conv.smallbeta}
\begin{aligned}
\tilde \bP^{\otimes 2}_{\gb,h_c^a(\gb)}((\tilde \tau \cap \tilde \tau')_1 < \infty) &\to \bP^{\otimes 2} \hP^{\otimes 2}((\tau \cap \tau' \cap \htau \cap \htau')_1 < \infty) \quad (\text{ as } \gb \searrow 0)\\
&\leq \bP^{\otimes 2}((\tau \cap \tau')_1 < \infty),
\end{aligned}
\end{equation}
which is strictly less than $1$ because $\ga < 1/2$ (see Proposition~\ref{pr:transient.inter}). Note that the convergence in \eqref{eq:conv.smallbeta} does not seem to follow from simple arguments since Portmanteau's Theorem does not apply and Fatou's lemma would go in the opposite direction. Therefore, we give an argument at the end of the proof, in Step~6, in order not to disrupt the main line of proof.\\

\smallskip

\noindent{\bf Step 5.} We now prove that the right hand side of \eqref{eq:UB.sec.mom} is finite using that $\cZ$ is finite and a stochastic domination argument. For $I, J$ finite subsets of $\bbN$ define  
$$Y_i=\#\{m\geq1 \colon \gD_m = i\}$$
for all $i\in\bbN_0$. We claim that if $I, J$ are independent, each with law $\bbP$, then one can find for all $i \geq 1$ a pair of random variables $Y_i^{(1)}$ and $Y_i^{(2)}$ such that
\begin{equation}\label{eq:stoch.dom}
Y_i \le Y_i^{(1)}+Y_i^{(2)}, \qquad Y_i^{(1)} \preceq 1+ Y_0,\ Y_i^{(2)} \preceq 1+ Y_0.
\end{equation}

We will prove \eqref{eq:stoch.dom} at the end of this step. Apply H\"older's inequality with
\begin{equation}
p_i = \frac{1}{r(i)}\sum_{m\geq0} r(m) \quad \text{ for all } i\in \bbN_0,
\end{equation}
which have $\sum_{i\geq0} \frac{1}{p_i} = 1$, to get
\begin{equation}
\begin{aligned}
\tilde\bE^{\otimes 2} \left( e^{\sum_{i \geq 0} r(i) Y_i} \right) 
\leq &
\prod_{i\geq 0} \left[\tilde\bE^{\otimes 2} \left( e^{Y_i\sum_{m \geq 0} r(m)} \right) \right]^{1/p_i}\\
 \stackrel{\text{by \eqref{eq:stoch.dom}}}{\leq} &\prod_{i\geq 0}  \left[\tilde\bE^{\otimes 2} \left( e^{\{\sum_{m \geq 0} r(m)\}({ Y_i^{(1)}+Y_i^{(2)}})} \right) \right]^{1/p_i}\\
\stackrel{\text{(Cauchy-Schwarz)}}{\leq} &\prod_{i\geq 0}  \left\{ \left[ \tilde\bE^{\otimes 2} \left( e^{2\{\sum_{m \geq 0} r(m)\}Y_i^{(1)}}\right) \right]^{1/2} \left[ \tilde\bE^{\otimes 2} \left( e^{2\{\sum_{m \geq 0} r(m)\}Y_i^{(2)}} \right)\right]^{1/2} \right\}^{1/p_i}\\
\stackrel{\text{by \eqref{eq:stoch.dom}}}{\leq} &\prod_{i\geq 0} \left\{ e^{2\sum_{m\geq 0}r(m)} \tilde\bE^{\otimes 2} \left( e^{2\{\sum_{m \geq 0} r(m)\}Y_0} \right) \right\}^{1/p_i} \\
=&  e^{2\sum_{m\geq 0}r(m)} \tilde\bE^{\otimes 2} \left( e^{2\{\sum_{m \geq 0} r(m)\}Y_0} \right),
\end{aligned}
\end{equation}
which is finite when $\gb$ is small enough, as we have proven in Step 4.\\

We are left with proving \eqref{eq:stoch.dom}. Note that $\bbP$ is the law of a transient renewal with first return time distribution
\begin{equation}
\bbK(n):=\begin{cases} z \bP_{h_c^a(\gb)}(n\in \tau) \hP (n\in \htau) & \text{ if } n\in \bbN,\\ \frac{1}{s} &  \text{ if } n=\infty. 
\end{cases}
\end{equation}
We point out that indeed $s\geq1$, as can be seen by restricting the sum in \eqref{eq:def.s} to the empty set. Then $Y_i\le |I\cap (J-i)|+|(I-i)\cap J|$. If $I\cap (J-i)\ne \emptyset$ and we call $\gz$ its smallest element, then, given $\gz$, the pair $(I-\gz)\cap \bbN, (J-i-\gz)\cap \bbN$ has the same law as $I, J$, by the renewal property. Thus, 
\begin{equation}
|I\cap (J-i)|\preceq 1+|I \cap J|=1+Y_0.
\end{equation}
The same bound applies to $|(I-i)\cap J|$ and we obtain \eqref{eq:stoch.dom} with $Y_i^{(1)}=|I\cap (J-i)|$ and $Y_i^{(2)}=|(I-i)\cap J|$.\\

\noindent{\bf Step 6.} It remains to prove the convergence in \eqref{eq:conv.smallbeta}. Let us use as a shorthand notation:
\begin{equation}
\tilde \bbP_\gb(\tilde \tau_1 = n) := e^\gb \hP \times \bP_{h_c^a}((\tau \cap \htau)_1 = n) = \tilde \bP_{\gb,h_c^a}(\tilde \tau_1 = n), \qquad n \in \bbN.
\end{equation}
We want to prove that
\begin{equation}
\tilde \bbP_\gb^{\otimes 2}((\tilde \tau \cap \tilde \tau')_1 < \infty) \to \tilde \bbP_0^{\otimes 2}((\tilde \tau \cap \tilde \tau')_1 < \infty), \qquad \gb \searrow 0,
\end{equation}
which we do by means of Fourier series (a similar convergence problem was treated in~\cite{P13a} with the same technique). By the Renewal Equation, it is actually enough to prove that
\begin{equation}
\tilde \bbE_\gb^{\otimes 2}(|\tilde\tau \cap \tilde\tau'|) \rightarrow \tilde \bbE_0^{\otimes 2}(|\tilde\tau \cap \tilde\tau'|), \qquad \gb \searrow 0,
\end{equation}
that is, convergence of the series
\begin{equation} \label{L2NormSeries}
\sum_{n\geq0} \tilde \bbP_\gb^{\otimes 2}(n \in \tilde\tau \cap \tilde\tau') = \sum_{n\geq0} \tilde \bbP_\gb(n \in \tilde\tau)^2 \qquad \text{ as } \gb \searrow 0,
\end{equation}
which may be seen as the $L^2$-norm of some function. More precisely, if we define
\begin{equation}
\varphi_\gb(t) = \tilde \bbE_\gb \Big( e^{i t \tilde\tau_1} \Big), \qquad t \in \bbR,
\end{equation}
then
\begin{equation} \label{Inversion_eq}
\tilde \bbP_\gb(n \in \tilde\tau) = \frac{1}{\tilde \bbE_\gb(\tilde\tau_1)} + \frac{1}{2 \pi} \int_{- \pi}^\pi e^{- i n t} 2 \re \Big[ \frac{1}{1 - \varphi_\gb(t)} \Big] \dd t, \qquad \gb \geq 0,
\end{equation}
where $\re(\cdot)$ is the real part of a complex number. The equation~\eqref{Inversion_eq} can be recovered from equation (8) in~\cite[Chap.\ II.9]{Spitzer}. 
Note that the integral in the right-hand side of~\eqref{Inversion_eq} is real and even in $n\in\bbZ$ since then the complex exponential may be replaced by a cosine.
Moreover,
\begin{equation}
\tilde \bbP_\gb(\tilde\tau_1= n) \geq e^{\gb + h_c^a} \bP(\tau_1 = n) \hP(n \in \htau) = \frac{1}{\hmu} e^{\gb + h_c^a} \bP(\tau_1 = n) (1 + o(1)),
\end{equation}
so $\tilde \bbE_\gb(\tilde\tau_1) = \infty$ and the first term in the right hand side of~\eqref{Inversion_eq} is zero. Therefore, the sum in \eqref{L2NormSeries} is related to the $L^2$ norm of $2 \re[(1 - \varphi_\gb(\cdot))^{-1}]$ in $[- \pi,\pi]$ via
\begin{equation}
\| 2 \re[(1 - \varphi_\gb(\cdot))^{-1}] \|_2^2 = \sum_{n\in\bbZ} \tilde \bbP_\gb(|n| \in \tilde\tau)^2 = 1 + 2 \sum_{n\geq1} \tilde\bbP_\gb(n \in \tilde\tau)^2,
\end{equation}
and we only need to show convergence of the $L^2$-norms of $\re[(1 - \varphi_\gb(\cdot))^{-1}]$ to that of $\re[(1 - \varphi_0(\cdot))^{-1}]$ as $\gb \to 0$.

We now observe that
\begin{equation}
\varphi_\gb(t) = \bE \hE \Big( \Psi_\gb e^{i t \tilde\tau_1} \Big), \quad \text{ where } \Psi_\gb := \exp(\gb + h_c^a |\tau \cap (0,\tilde\tau_1]|)
\end{equation}
is such that $\bE\hE(\Psi_\gb) = 1$. Since $h_c^a(\gb) \leq 0$, we get by the Dominated Convergence Theorem that $\lim_{\gb\to0} \varphi_\gb(t) = \varphi_0(t)$. To show the convergence of the $L^2$ norm of $\re[(1 - \varphi_\gb(\cdot))^{-1}]$ to the one of $\re[(1 - \varphi_0(\cdot))^{-1}]$ we show domination by a square integrable function in $[- \pi,\pi]$ for all $\gb \in [0,\overline\gb_0]$ where $\overline\gb_0 > 0$ is fixed. Note that
\begin{equation}
\re[(1 - \varphi_\gb(t))^{-1}] \leq [\re(1 - \varphi_\gb(t))]^{-1}, \qquad t \neq 0,
\end{equation}
and
\begin{equation}
1 - \varphi_\gb(t) = \bE \hE (\Psi_\gb (1 - e^{i t \tilde\tau_1})).
\end{equation}
We treat first values of $t$ close zero. Since $1 - \cos(t \tilde\tau_1) \geq 0$ a.s., we may write for $\gb \in [0,\overline\gb_0]$.
\begin{equation}
\begin{aligned}
\re(1 - \varphi_\gb(t)) &= \bE \hE \Big( \Psi_\gb [1 - \cos(t \tilde\tau_1)] \Big)\\
&\geq \sum_{1\leq n\leq 1/|t|} (1 - \cos(t n)) \bE \hE \Big( \exp\{\gb + h_c^a |\tau \cap (0,n]|\} \ind_{\{\tilde\tau_1 = n\}} \Big)\\
&\geq e^{h_c^a(\gb_0)} \Big(\min_{\ell\geq1} \hP(\ell \in \htau) \Big) \sum_{1\leq n\leq 1/t} (1 - \cos(t n)) \bP(\tau_1 = n)\\
&\sim e^{h_c^a(\gb_0)} \Big( \min_{\ell\geq1} \hP(\ell \in \htau) \Big) (c \, |t|^\ga) \qquad \text{ as } t \to 0.
\end{aligned}
\end{equation}
Note that the minimum in the line above is positive as a consequence of the Renewal Theorem ($\ha > 1$). To go from the second to the third line, we restrict the expectation to the event $\{\tau_1 = n\}$, on which $|\tau \cap (0,n]| = 1$, and use the monotonicity of $h_c^a(\gb)$. The last step is a standard Riemman sum approximation. Thus there is $t_0 \in (0,\pi]$ so that for $|t| \leq t_0$ and $\gb \in [0,\overline\gb_0]$ it holds $\re(1 - \varphi_\gb(t)) \geq c \, |t|^\ga$ for some constant $c > 0$. The function $|t|^{- \ga}$ is in $L^2[- \pi,\pi]$ since $\ga < 1/2$.

Then for $t \in [- \pi,\pi] \sm [- t_0,t_0]$ we have
\begin{equation}
\begin{aligned}
\re(1 - \varphi_\gb(t)) &= \bE \hE \Big( \Psi_\gb [1 - \cos(t \tilde\tau_1)] \Big)\\
&\geq (1 - \cos(t)) e^{h_c^a(\gb_0)} \bP \hP(\tilde\tau_1 = 1) \geq (1 - \cos(t_0)) e^{h_c^a(\gb_0)} \bP \hP(\tilde\tau_1 = 1).
\end{aligned}
\end{equation}
The last quantity does not depend on $t$ and is positive. This completes the proof.
\end{proof}


\begin{proof}[Proof of Theorem~\ref{thm:irrelevance}] The proof can be done by following the same steps as in the proof of Theorem~4.5 in~\cite{Gi2011}. The main ingredient is the fact that the sequence $(X_n)_{n\in\bbN} := (\bar Z_{n,\gb,h_c^a})_{n\in\bbN}$ is uniformly integrable, which is proven in Lemma~\ref{lem:second_moment}. The fact that our disorder is correlated does not necessitate any change here. However, it is useful that the disorder is bounded. The conclusion of Lemma~4.6 in~\cite{Gi2011}, needed in the proof, still holds because $(X_n)_{n\in\bbN}$ satisfies $\hE(X_n) = \bE_{h_c^a} \hE(e^{\gb \sum_{k=1}^n \gd_k \hat \gd_k}) \geq 1,$ and this is enough for the proof of the theorem.
\end{proof}


\begin{lemma} \label{lem:bdd.ann.pf.crit}
The sequence $(\bar Z_{n,\gb,h_c^a})_{n\in\bbN}$ is bounded (by $2$).
\end{lemma}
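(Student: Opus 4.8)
The sequence $(\bar Z_{n,\gb,h_c^a})_{n\in\bbN}$ is bounded (by $2$).

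The plan is to recognise $\bar Z^a_{n,\gb,h}:=\hE(\bar Z_{n,\gb,h})$ as (a free version of) the annealed partition function of a homogeneous pinning model for the intersection renewal $\tilde\tau=\tau\cap\htau$, and to exploit that at $h=h^a_c(\gb)$ this model sits exactly at its critical point. Concretely, by the cluster expansion \eqref{eq:rwt.first.mom} one has $\bar Z^a_{n,\gb,h}=\sum_{I\subseteq[n]}z^{|I|}U_h(I)\hU(I)$, where $z^{|I|}U_h(I)\hU(I)=z^{|I|}(\bP_h\times\hP)(I\subseteq\tilde\tau)$; since all terms are nonnegative, monotone convergence gives $\sup_{n\ge1}\bar Z^a_{n,\gb,h}=\sum_{I\subseteq\bbN,\,|I|<\infty}z^{|I|}U_h(I)\hU(I)$. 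Its fully--pinned counterpart is precisely $Z^{a,c}_{n,\gb,h}$ in \eqref{eq:equ.znac}, and the two objects are linked by a last--renewal--point decomposition (exactly as in the proof of Proposition~\ref{pr:exist.ann.free.energy}).

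The core estimate is $Z^{a,c}_{n,\gb,h^a_c(\gb)}\le 1$ for every $n\ge1$. This follows from super--additivity: by \eqref{eq:facsup} we have $n^{-1}\log Z^{a,c}_{n,\gb,h}\le F^{a,c}(\gb,h)$ for all $n$, and $F^{a,c}(\gb,h)=F^a(\gb,h)$ was established in the proof of Proposition~\ref{pr:exist.ann.free.energy}. Now $F^a(\gb,h^a_c(\gb))=0$: for $\gb\le\gb_0$ this is immediate since $h^a_c(\gb)=0$ and $F^a(\gb,0)=0$; for $\gb>\gb_0$ it follows from the continuity of $h\mapsto F^a(\gb,h)$ together with the fact that $F^a(\gb,\cdot)$ vanishes on $(-\infty,h^a_c(\gb))$ (equivalently, via Lemma~\ref{lem:acc} and $\gb=-\log p(h^a_c(\gb))$). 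Hence $Z^{a,c}_{n,\gb,h^a_c(\gb)}\le e^{0}=1$, which already gives the bound with constant $2$ to spare for the constrained object.

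It remains to transfer this bound to the free partition function, which is where the real work lies. Decomposing $\bar Z^a_{n,\gb,h^a_c}$ according to the position $r$ of the last point of $\tilde\tau$ in $[0,n]$ and applying the renewal property at $r$ — legitimate because $r\in\tilde\tau$ forces $r\in\tau$ \emph{and} $r\in\htau$ simultaneously, so the future of $\tau$ and of $\htau$ both regenerate — yields $\bar Z^a_{n,\gb,h^a_c}=\sum_{r=0}^n Z^{a,c}_{r,\gb,h^a_c}\,(\bP_{h^a_c}\times\hP)(\tilde\tau_1>n-r)$. Equivalently, grouping the defining sum $s=\sum_I z^{|I|}U_{h^a_c}(I)\hU(I)$ by the cardinality of $I$, it is a geometric--type series whose elementary terms are governed by $z\,(\cI(h^a_c(\gb))-1)$, and whose control rests on the fact that $h^a_c(\gb)$ is chosen so that the tilted intersection renewal $\tilde K_\gb$ of \eqref{tildeRenewal} is exactly a probability (equivalently $e^{\gb}(\bP_{h^a_c}\times\hP)(\tilde\tau_1<\infty)=1$, the by--product of Lemma~\ref{lem:acc}); using in addition that $s\ge1$ (restrict the sum to $I=\emptyset$) and that the contribution of the nonempty $I$ is controlled, one closes the bound $s\le 2$. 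The main obstacle is precisely this final summation: it is the point at which the delicate balance of the intersection renewal at criticality must be used, and where one cannot simply bound $(\bP_{h^a_c}\times\hP)(\tilde\tau_1>n-r)$ by $1$ but must exploit its decay together with the boundedness of $Z^{a,c}_{r,\gb,h^a_c}$.
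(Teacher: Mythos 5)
Your preliminary reductions are fine: $Z^{a,c}_{r,\gb,h_c^a}\le 1$ is correct (indeed, by \eqref{eq:equ.znac} and \eqref{tildeRenewal}, $Z^{a,c}_{r,\gb,h_c^a}=\tilde\bP_\gb(r\in\tilde\tau)$), and so is your last-point decomposition $\hE(\bar Z_{n,\gb,h_c^a})=\sum_{r=0}^n Z^{a,c}_{r,\gb,h_c^a}\,\bP_{h_c^a}\times\hP(\tilde\tau_1>n-r)$. But the argument stops exactly where the lemma begins: the final summation is only announced (``one closes the bound $s\le 2$''), and the plan you sketch for it cannot be carried out. For $\gb>\gb_0$ one has $\cI(h_c^a(\gb))=(1-e^{-\gb})^{-1}$, hence by \eqref{eq:rel_p_I} $\bP_{h_c^a}\times\hP(\tilde\tau_1<\infty)=e^{-\gb}$; therefore the tail you want to exploit does not decay at all, $\bP_{h_c^a}\times\hP(\tilde\tau_1>m)\ge 1-e^{-\gb}$ for every $m$, while $\sum_{r=0}^n Z^{a,c}_{r,\gb,h_c^a}=\sum_{r=0}^n\tilde\bP_\gb(r\in\tilde\tau)$ diverges as $n\to\infty$ because $\tilde K_\gb$ is recurrent. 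Equivalently, your grouping by cardinality is exact: by the renewal property of $\tilde\tau$, $\sum_{I\subseteq\bbN,\,|I|=j}U_{h_c^a}(I)\hU(I)=(\cI(h_c^a)-1)^j$, so that
\begin{equation*}
s=\sum_{j\ge 0}\big(z\,(\cI(h_c^a(\gb))-1)\big)^j,\qquad z\,(\cI(h_c^a(\gb))-1)=(e^{\gb}-1)\,\frac{e^{-\gb}}{1-e^{-\gb}}=1,
\end{equation*}
i.e.\ the ``geometric-type series'' you appeal to has ratio exactly one at the annealed critical point; and carried to completion your decomposition gives the identity $\hE(\bar Z_{n,\gb,h_c^a})=e^{-\gb}+(1-e^{-\gb})\sum_{r=0}^n\tilde\bP_\gb(r\in\tilde\tau)$, which grows without bound in $n$. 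So the missing step is not a technicality: no argument of the form ``pinned partition function bounded by $1$ plus decay of $\bP_{h_c^a}\times\hP(\tilde\tau_1>\cdot)$'' can yield the claimed bound $2$ for the free quantity.

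The paper's own proof proceeds differently at precisely this point: it expands over the whole set of points of $\tilde\tau$ in $[1,n]$, multiplies the last factor by $e^{\gb}$, and identifies the resulting sum with $\tilde\bP_\gb(\tilde\tau_1\le n)\le 1$. Your computation in fact isolates the sensitive term in that identification, namely the mass at infinity of the transient renewal: $e^{\gb}\,\bP_{h_c^a}\times\hP(\tilde\tau_1>m)=\tilde\bP_\gb(\tilde\tau_1>m)+(e^{\gb}-1)$, so the quantity that is genuinely bounded by $1$ is the pinned object $Z^{a,c}_{n,\gb,h_c^a}$, and the unconstrained sum appearing in \eqref{eq:def.s} deserves a second look in the light of the identity displayed above. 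In any case, as a proof of the stated lemma your proposal is incomplete at its decisive step, and the route you indicate for closing it would fail.
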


\begin{proof}[Proof of Lemma~\ref{lem:bdd.ann.pf.crit}] Recall that $\tilde\tau = \tau \cap \htau$, and decompose $Z_{n,\gb,h_c^a}$ according to the events $\{\tilde\tau_1 > n\}$ and $\{\tilde\tau_1 \leq n\}$ to get
\begin{equation}
\begin{aligned}
&\bar Z_{n,\gb,h_c^a} \\
&= \bP_{h_c^a}\hP (\tilde\tau_1 > n) +  \sum_{1\leq k\leq n} \sum_{0=i_0<i_1<\ldots<i_k\leq n} \Big( \prod_{j=1}^k e^\gb \bP_{h_c^a} \hP (\tilde\tau_1 = i_j - i_{j-1}) \Big) \bP_{h_c^a} \hP (\tilde\tau_1 > n-i_k)\\
&\leq \bP_{h_c^a} \hP (\tilde\tau_1 > n) +  \sum_{1\leq k\leq n} \sum_{0=i_0<i_1<\ldots<i_k\leq n} \Big( \prod_{j=1}^k e^\gb \bP_{h_c^a} \hP (\tilde\tau_1 = i_j - i_{j-1}) \Big) e^{\gb} \bP_{h_c^a} \hP (\tilde\tau_1 > n-i_k)\\
&= \bP_{h_c^a} \hP (\tilde\tau_1 > n) + \tilde\bP_{\gb}(\tilde\tau_1 \leq n) \leq 2,
\end{aligned}
\end{equation}
where $\tilde\bP_{\gb}$ has been defined in \eqref{tildeRenewal}.
\end{proof}


\appendix

\section{results on renewals and homogeneous pinning} \label{Facts}
We collect here a few results on renewal processes and the homogeneous pinning model.

\begin{proposition}[See~\cite{A2003} Proposition~2.4, Chapter~I] \label{meanReturns}
If $\tau$ is a (possibly transient) renewal with $\tau_0 = 0$, then $\bE(|\tau|) = \bP(\tau_1 = \infty)^{-1}$.
\end{proposition}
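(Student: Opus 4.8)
The plan is to expand the random variable $|\tau|$ as a sum of indicators and exploit the i.i.d.\ structure of the increments. Recall that a (possibly transient) renewal with $\tau_0 = 0$ is built from an i.i.d.\ sequence of increments $(T_k)_{k \geq 1}$ taking values in $\bbN \cup \{\infty\}$, each $T_k$ distributed as $\tau_1$, by setting $\tau_k = T_1 + \cdots + T_k$; as soon as one increment equals $+\infty$, the corresponding point and all later ones are not counted as members of the renewal set. Hence
\begin{equation}
|\tau| = \sum_{k \geq 0} \ind_{\{\tau_k < \infty\}},
\end{equation}
the term $k = 0$ accounting for the point $\tau_0 = 0$.

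First I would take expectations and apply Tonelli's theorem (all summands being non-negative) to obtain $\bE(|\tau|) = \sum_{k \geq 0} \bP(\tau_k < \infty)$. Next, since $\{\tau_k < \infty\} = \bigcap_{j=1}^{k} \{T_j < \infty\}$ and the $T_j$'s are independent with $\bP(T_j < \infty) = \bP(\tau_1 < \infty)$, the probability factorizes: $\bP(\tau_k < \infty) = \bP(\tau_1 < \infty)^k$ for every $k \geq 0$. Summing the resulting geometric series then gives
\begin{equation}
\bE(|\tau|) = \sum_{k \geq 0} \bP(\tau_1 < \infty)^k = \frac{1}{1 - \bP(\tau_1 < \infty)} = \frac{1}{\bP(\tau_1 = \infty)},
\end{equation}
with the convention that both sides equal $+\infty$ in the recurrent case $\bP(\tau_1 = \infty) = 0$.

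I do not expect any genuine obstacle here: the statement amounts to the observation that the number of successful renewals before the first infinite increment is geometric. The only point deserving a word of care is the bookkeeping convention that truncates the renewal set at the first infinite increment, which is precisely what makes the events $\{\tau_k < \infty\}$ decrease multiplicatively in $k$ and hence makes the series geometric.
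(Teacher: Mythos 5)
Your proof is correct: the paper gives no argument of its own for this statement (it simply cites Asmussen, Proposition~2.4 of Chapter~I), and your decomposition $|\tau| = \sum_{k \geq 0} \ind_{\{\tau_k < \infty\}}$ together with the factorization $\bP(\tau_k < \infty) = \bP(\tau_1 < \infty)^k$ and the geometric series is precisely the standard computation underlying that reference. The convention handling the recurrent case $\bP(\tau_1 = \infty) = 0$, where both sides equal $+\infty$, is also taken care of correctly.
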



\begin{proposition}[See~\cite{Gi2007} Theorem~A.4, Appendix~A.5.2]
If $\tau$ is a transient renewal that satisfies \eqref{eq:defK_regvar} then
\begin{equation}
\bP(n \in \tau) \sim \frac{\bP(\tau_1 = n)}{\bP(\tau_1 = \infty)^2} \quad \text{as } n \to \infty.
\end{equation}
\end{proposition}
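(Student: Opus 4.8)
The plan is to derive the estimate from the local subexponentiality of the inter-arrival law. Since $\tau$ is transient, $p := \bP(\tau_1 < \infty) = \sum_{n \geq 1} K(n) \in (0,1)$; write $q := \bP(\tau_1 = \infty) = 1 - p$. Expanding $\bP(n \in \tau)$ over the index of the renewal point that lands exactly at $n$ (equivalently, solving the renewal equation $u = \gd_0 + K * u$ by its Neumann series) gives
\begin{equation}
\bP(n \in \tau) = \sum_{k \geq 0} K^{*k}(n), \qquad n \geq 0,
\end{equation}
where $K^{*0} = \gd_0$ and $K^{*k}$ denotes the $k$-fold convolution of $K$. For $n \geq 1$ the term $k = 0$ vanishes and the term $k = 1$ equals $K(n) = \bP(\tau_1 = n)$, so the claim amounts to controlling $K^{*k}(n)$ for $k \geq 2$ and summing the series.

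First I would establish the single-convolution asymptotics: for every fixed $k \geq 1$,
\begin{equation}
K^{*k}(n) \sim k\, p^{k-1} K(n), \qquad n \to \infty,
\end{equation}
by induction on $k$. Writing $K^{*k}(n) = \sum_{m=1}^{n-1} K(m)\, K^{*(k-1)}(n - m)$ and splitting the sum at $m = \lfloor n/2 \rfloor$, the ``balanced'' range (both $m$ and $n-m$ of order $n$) contributes $O\big(n\, K(\lfloor n/2 \rfloor)^2\big) = o(K(n))$, precisely because $\ga > 0$ makes $K$ regularly varying of index $-(1+\ga) < -1$, hence locally subexponential; the range of small $m$ contributes $\big(\sum_{m} K(m)\big)\, K^{*(k-1)}(n)[1+o(1)] \sim p\cdot(k-1)p^{k-2} K(n)$ by the induction hypothesis, and the range of small $n-m$ contributes $\big(\sum_{\ell} K^{*(k-1)}(\ell)\big)\, K(n)[1+o(1)] = p^{k-1} K(n)[1+o(1)]$, the sum of the last two being $k\,p^{k-1}K(n)$. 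This is the classical Chover--Ney--Wainger computation in its local (mass-function) form. The second ingredient is a uniform Kesten-type bound: there exist $\theta \in (p,1)$ and $A \in (0,\infty)$ with $K^{*k}(n) \leq A\, k\, \theta^{k-1} K(n)$ for all $n,k \geq 1$, the local analogue of Kesten's lemma for subexponential laws, valid under our assumptions on $K$.

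Combining the two, dominated convergence applied to the series yields
\begin{equation}
\frac{\bP(n \in \tau)}{K(n)} = \sum_{k \geq 1} \frac{K^{*k}(n)}{K(n)} \xrightarrow[n \to \infty]{} \sum_{k \geq 1} k\, p^{k-1} = \frac{1}{(1 - p)^2} = \frac{1}{\bP(\tau_1 = \infty)^{2}},
\end{equation}
which is exactly $\bP(n \in \tau) \sim \bP(\tau_1 = n)/\bP(\tau_1 = \infty)^{2}$. The main obstacle is the pair of subexponentiality inputs used above — the local one-convolution asymptotic and the uniform Kesten bound — which are routine but not entirely trivial in the local setting; they rely on $K$ being regularly varying of index strictly below $-1$ together with the aperiodicity of its support, which places $K$ in the appropriate local-subexponential class, so one may either reproduce these two lemmas or simply invoke Theorem~A.4 and Appendix~A.5.2 of~\cite{Gi2007}.
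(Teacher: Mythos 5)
Your argument is correct: the Neumann-series decomposition $\bP(n\in\tau)=\sum_{k\geq 0}K^{*k}(n)$, the fixed-$k$ asymptotics $K^{*k}(n)\sim k\,p^{k-1}K(n)$ with $p=\bP(\tau_1<\infty)$, the uniform Kesten-type bound, and dominated convergence yield exactly $K(n)/(1-p)^{2}$, as claimed. The paper itself gives no proof of this proposition — it only cites \cite{Gi2007} (Theorem A.4, Appendix A.5.2) — and your route is essentially the same local-subexponentiality argument used in that reference, the two auxiliary lemmas you flag (the one-convolution asymptotics and the uniform bound) being precisely the content of that appendix.
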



\begin{proposition} \label{RMassFunctionProp}
If $\tau$ is a recurrent renewal that satisfies \eqref{eq:defK_regvar} then as $n \to \infty$,
\begin{equation} \label{RMassFunction}
\bP(n \in \tau) \sim
\begin{cases} 
\frac{K(n)}{(\bar K(n))^2} & \text{ if } \ga = 0,\\
\frac{C_\ga}{L(n) n^{1 - \ga}} & \text{ if } \ga \in (0,1),\\
\left\{ \sum_{j=0}^n \bar K(j) \right\}^{-1} & \text{ if } \ga \geq 1,
\end{cases}
\end{equation}
with $\bar K(j) = \sum_{r=j+1}^\infty K(r)$ for each $j\in\bbN_0$ and $C_\ga = \ga \sin(\pi \ga)/\pi$. 
\end{proposition}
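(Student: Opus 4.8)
This statement packages together three classical regimes of renewal theory, so the plan is to treat the three cases separately, in each of them reducing the claim to an appropriate (strong) renewal theorem and checking the translation of hypotheses. Aperiodicity, which is built into \eqref{eq:defK_regvar}, is what lets us pass from lattice to non-lattice versions everywhere.

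\textbf{Case $\ga \geq 1$.} I would first observe that $\sum_{j=0}^n \bar K(j) = \bE\big(\tau_1 \wedge (n+1)\big)$, so this quantity increases to $\mu = \bE(\tau_1)$. When $\mu < \infty$ --- automatic for $\ga > 1$ --- the aperiodic renewal theorem of Erd\H{o}s--Feller--Pollard gives $\bP(n \in \tau) \to 1/\mu$, which is exactly the claim. When $\ga = 1$ and $\mu = \infty$, Karamata's theorem makes $\bar K$ regularly varying of index $-1$, hence $m_n := \sum_{j=0}^n \bar K(j)$ slowly varying, and Erickson's strong renewal theorem for infinite mean yields $\bP(n \in \tau) \sim 1/m_n$. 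Both sub-cases give the stated asymptotic.

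\textbf{Case $\ga \in (0,1)$.} Now $\tau_1$ lies in the domain of attraction of a one-sided $\ga$-stable law, and Karamata's theorem gives $\bar K(n) \sim L(n)/(\ga\, n^{\ga})$. The plan is to invoke the strong renewal theorem --- Garsia--Lamperti for $\ga \in (1/2,1)$, and Doney for all $\ga \in (0,1)$, the regularly varying structure ensuring that the required condition is met in the cases we use --- in the form $\bP(n \in \tau) \sim \frac{1}{\Gamma(\ga)\Gamma(1-\ga)}\,\frac{1}{n\,\bar K(n)}$. Substituting the asymptotics of $\bar K$ and using the reflection formula $\Gamma(\ga)\Gamma(1-\ga) = \pi/\sin(\pi\ga)$ turns the right-hand side into $\frac{\ga\sin(\pi\ga)}{\pi}\,\frac{n^{\ga-1}}{L(n)} = \frac{C_\ga}{L(n)\,n^{1-\ga}}$, as claimed.

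\textbf{Case $\ga = 0$.} This is the genuinely different regime: $\bar K$ is now slowly varying, the renewal is ``barely recurrent'', and $K(n)/\bar K(n)^2$ decays strictly faster than $1/(n\bar K(n))$, so the naive strong renewal theorem cannot hold. Here the asymptotics are governed by the single-big-jump principle and I would argue directly: write $\bP(n \in \tau) = \sum_{k\geq0}\bP(\tau_k = n)$, fix a small $\gep > 0$, and split each path $\{\tau_k = n\}$ according to whether it has no increment exceeding $\gep n$, exactly one, or at least two. The dominant contribution is the ``exactly one'' term; summing over the position of the large increment and over the (small) masses before and after it, and using the Tauberian estimate $\sum_{m=0}^n \bP(m \in \tau) \sim 1/\bar K(n)$ --- which follows from $\sum_n \bP(n\in\tau)s^n = (1 - \sum_n K(n)s^n)^{-1}$, the Abelian relation $1 - \sum_n K(n)s^n \sim \bar K\big(1/(1-s)\big)$ as $s \uparrow 1$, and Karamata's Tauberian theorem --- this term is asymptotic to $\bar K(n)^{-2}K(n)$. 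The ``no large increment'' term is $o(K(n))$ by subexponentiality of $K$, and the ``two or more'' term is smaller still; letting $\gep \downarrow 0$ concludes. This case can alternatively be quoted from the literature on heavy-tailed renewal sequences. The main obstacle is precisely this last case: making the big-jump argument rigorous requires uniform-in-$k$ control of $\bP(\tau_k = m)$ to handle configurations where the large increment sits near either end of the path and the count of already-placed renewals is not yet concentrated around $1/\bar K(\cdot)$, which is where one uses the bound $\bP(\tau_k = m) \leq c\,k\,K(m)$ of \cite[Lemma A.5]{Gi2007} together with a careful resummation; the cases $\ga \geq 1$ and $\ga \in (0,1)$ are bookkeeping on top of well-established theorems.
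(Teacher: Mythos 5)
Your proposal is correct and takes essentially the same route as the paper, whose ``proof'' consists entirely of citations: the Renewal Theorem for $\ga>1$, \cite[Theorem~8.7.5]{BGT89} for $\ga=1$, the strong renewal theorem of \cite{GL63} (with the local assumption \eqref{eq:defK_regvar} covering $\ga\le 1/2$, exactly the point you flag via Doney) for $\ga\in(0,1)$, and \cite[Theorem~1.1]{Na2012} for $\ga=0$. Your single-big-jump sketch for $\ga=0$ is just an outline of the content of the Nagaev result the paper quotes, and you yourself note that this case can be cited instead, so there is no genuine difference in approach.
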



When $\ga = 0$, the sequence $(\bar K(n))_{n\in\bbN}$ is slowly varying (\cite{BGT89}, Proposition~1.5.9 (b)), while when $\ga = 1$, the sequence $(\sum_{j=0}^ n \bar K(j))_{n\in\bbN}$ is slowly varying (\cite{BGT89}, Propositions~1.5.10 and~1.5.9 (a)).

The proof of the statement for $\ga = 0$ is in~\cite[Theorem~1.1]{Na2012}, for $\ga \in (0,1)$ in~\cite[Theorem~1.1]{GL63}, for $\ga = 1$ in~\cite[Theorem~8.7.5]{BGT89}, while for $\ga > 1$ it is the Renewal Theorem.


\begin{proposition} \label{pr:transient.inter}
Let $\tau$ and $\tau'$ be two independent copies of a recurrent renewal process that satisfies \eqref{eq:defK_regvar}.
If $\ga < \frac{1}{2}$, or $\ga = \frac{1}{2}$ and $\sum_{n\geq1} n^{-1} L(n)^{-2} < \infty$, then $\tau \cap \tau'$ is transient.
\end{proposition}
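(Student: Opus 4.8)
The plan is to reduce the statement to a summability condition for the renewal mass function $u(n) := \bP(n \in \tau)$. First I would note that $\sigma := \tau \cap \tau'$ is itself a renewal process starting at $0$ (possibly transient), exactly as observed for $\tilde\tau = \tau \cap \htau$ earlier in the paper. By Proposition~\ref{meanReturns} applied to $\sigma$, the set $\sigma$ is transient if and only if $\bE(|\sigma|) < \infty$. Since $\tau$ and $\tau'$ are independent copies,
\[
\bE(|\tau \cap \tau'|) = \sum_{n \in \bbN_0} \bP(n \in \tau)\,\bP(n \in \tau') = \sum_{n \in \bbN_0} u(n)^2,
\]
so it suffices to prove that $\sum_{n} u(n)^2 < \infty$ under each of the two hypotheses.

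Next I would insert the asymptotics for $u(n)$ given by Proposition~\ref{RMassFunctionProp}. For $\ga \in (0,1)$ — which is the relevant range, since here $\ga \in (0,1/2]$ — one has $u(n) \sim C_\ga / (L(n)\, n^{1-\ga})$, hence $u(n)^2 \sim C_\ga^2 / (L(n)^2\, n^{2-2\ga})$. When $\ga < 1/2$ the exponent satisfies $2 - 2\ga > 1$, and a slowly varying function times $n^{-s}$ with $s > 1$ is summable (e.g.\ by Potter's bounds, see~\cite{BGT89}), so $\sum_n u(n)^2 < \infty$. When $\ga = 1/2$ the exponent is exactly $1$ and $u(n)^2 \sim C_{1/2}^2\, n^{-1} L(n)^{-2}$, so the series converges precisely under the stated assumption $\sum_{n\geq1} n^{-1} L(n)^{-2} < \infty$; here it is important that $\sim$ between sequences of nonnegative terms preserves convergence of the associated series. (The boundary case $\ga = 0$, if one wishes to include it, follows from the first line of Proposition~\ref{RMassFunctionProp}: then $u(n)^2 = \ell(n)/n^2$ for a slowly varying $\ell$, which is summable.) This establishes transience in all cases.

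The argument is essentially bookkeeping, so there is no serious obstacle; the only point requiring a little care is the borderline case $\ga = 1/2$, where one must use the sharp equivalence of Proposition~\ref{RMassFunctionProp} rather than a mere order-of-magnitude estimate, and combine it with the hypothesis on $\sum n^{-1} L(n)^{-2}$. I would also remark, since it costs nothing, that the same computation gives the converse: $\tau \cap \tau'$ is recurrent when $\ga > 1/2$, or when $\ga = 1/2$ and $\sum_n n^{-1} L(n)^{-2} = \infty$, matching the relevance criterion recalled in the introduction — though only the transient direction is needed for the applications in this paper.
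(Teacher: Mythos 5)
Your proposal is correct and follows exactly the paper's route: the paper's (one-line) proof is precisely to write $\bE^{\otimes 2}(|\tau \cap \tau'|) = \sum_{n\geq 0} \bP(n \in \tau)^2$ and invoke Proposition~\ref{RMassFunctionProp}, which is what you do, with the transience criterion via Proposition~\ref{meanReturns} and the borderline case $\ga = 1/2$ spelled out. Your added remarks (the $\ga=0$ aside and the converse recurrence statement) are harmless and consistent with the criterion recalled in the introduction.
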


\begin{proof} Use that $\bE^{\otimes 2}(|\tau \cap \tau'|) = \sum_{n\geq0} \bP(n \in \tau)^2$, in combination with Proposition~\ref{RMassFunctionProp}.
\end{proof}


Before stating the next lemma, we recall that an $\ga$-stable random variable $X$ has three parameters $\kappa\in[-1,1]$, $\sigma\ge 0$ and $m\in \bbR$ (skewness, scale and shift, respectively) appearing in its characteristic function:
\beq \label{eq:stable.cf}
\bE(e^{i\theta X}) = 
\left\{
\begin{array}{ll}
\exp(-\sigma |\theta| (1 + i\kappa (2/\pi) \sign(\theta) \log|\theta|) + im\theta) & \text{ if }\ga = 1,\\
\exp(-\sigma^\ga |\theta|^\ga (1 - i\kappa \sign(\theta) \tan(\pi \ga /2)) + im\theta) &  \text{ if } \ga\in(0, 1)\cup(1,2),
\end{array}
\right.
\eeq
see Definition~1.1.6 in \cite{ST94}. If $\kappa =1$ (resp.\ $-1$), $X$ is said to be totally skewed to the right (resp.\ left).

\begin{lemma} \label{lem:conv.power.tauk}
If $r > 0$, then
\begin{equation}
\bE(\tau_k^{- r}) \sim \left\{
\begin{array}{ll}
(\mu k)^{- r} & \quad \mbox{if } \ga > 1\\
\bE(X_\ga^{- r}) k^{- r/\ga} & \quad \mbox{if } \ga \in (0,1)
\end{array}
\right.
, \quad k \to \infty,
\end{equation}
where $X_\ga$ is an $\ga$-stable random variable totally skewed to the right, with scale parameter $\gs > 0$ depending on the distribution of $\tau_1$ and shift parameter $0$.
\end{lemma}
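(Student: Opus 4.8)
The plan is to handle the two cases separately, reducing in each to the convergence of a rescaled negative moment of $\tau_k$ read off from the Laplace transform $f(\lambda):=\bE(e^{-\lambda\tau_1})=\sum_{n\geq1}e^{-\lambda n}K(n)$, $\lambda>0$. Applying the elementary identity $z^{-r}=\Gamma(r)^{-1}\int_0^\infty\lambda^{r-1}e^{-\lambda z}\,d\lambda$ (valid for $z,r>0$) to $z=\tau_k$, and using Fubini together with $\tau_k\geq k\geq1$, one gets
\begin{equation}\label{eq:gammaid}
\bE(\tau_k^{-r})=\frac{1}{\Gamma(r)}\int_0^\infty\lambda^{r-1}f(\lambda)^k\,d\lambda .
\end{equation}

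When $\ga>1$ one has $\mu<\infty$, and I would simply invoke the strong law of large numbers: $\tau_k/k\to\mu$ $\bP$-a.s., while $\tau_k\geq k$ forces $(\tau_k/k)^{-r}\leq1$, so bounded convergence yields $\bE((\tau_k/k)^{-r})\to\mu^{-r}$, i.e.\ $\bE(\tau_k^{-r})\sim(\mu k)^{-r}$.

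When $\ga\in(0,1)$ I would substitute $\lambda=k^{-1/\ga}v$ in \eqref{eq:gammaid} to obtain $k^{r/\ga}\bE(\tau_k^{-r})=\Gamma(r)^{-1}\int_0^\infty v^{r-1}f(k^{-1/\ga}v)^k\,dv$, and then pass to the limit under the integral sign. For the pointwise limit I would use a Tauberian estimate: from $1-f(\lambda)=\lambda\int_0^\infty e^{-\lambda t}\bP(\tau_1>t)\,dt$ and $\bP(\tau_1>t)\sim(c_K/\ga)t^{-\ga}$, Karamata's Tauberian theorem (see \cite{BGT89}) gives $1-f(\lambda)\sim c_0\lambda^\ga$ as $\lambda\downarrow0$ with $c_0:=c_K\Gamma(1-\ga)/\ga>0$; hence $k\log f(k^{-1/\ga}v)\to -c_0v^\ga$, so $f(k^{-1/\ga}v)^k\to e^{-c_0v^\ga}$. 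For the domination I would combine two bounds on $f$: on $(0,1]$, continuity and positivity of $\lambda\mapsto-\log f(\lambda)/\lambda^\ga$ (with limit $c_0$ at $0$) give $-\log f(\lambda)\geq c_1\lambda^\ga$ for some $c_1>0$; and for $\lambda\geq1$, $f(\lambda)=\sum_n e^{-\lambda n}K(n)\leq e^{-\lambda}\sum_n e^{-(n-1)}K(n)=e\,f(1)\,e^{-\lambda}\leq e^{-\lambda}$ since $f(1)=\sum_n e^{-n}K(n)\leq e^{-1}$. A short case analysis (for $v\leq k^{1/\ga}$ one has $k(k^{-1/\ga}v)^\ga=v^\ga$; for $v>k^{1/\ga}$ one has $k\cdot k^{-1/\ga}v\geq v^\ga$) then yields $f(k^{-1/\ga}v)^k\leq e^{-cv^\ga}$ for all $v>0$ and $k\geq1$, where $c:=\min(c_1,1)>0$, and $v^{r-1}e^{-cv^\ga}$ is integrable on $(0,\infty)$ since $r>0$. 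Dominated convergence then gives $k^{r/\ga}\bE(\tau_k^{-r})\to\Gamma(r)^{-1}\int_0^\infty v^{r-1}e^{-c_0v^\ga}\,dv$. Finally I would identify the limit: the positive $\ga$-stable law characterised by $\bE(e^{-vX_\ga})=e^{-c_0v^\ga}$ is totally skewed to the right, with shift $0$ and a scale parameter $\gs>0$ depending only on $c_0$ (hence only on the tail of $\tau_1$), and integrating the identity $z^{-r}=\Gamma(r)^{-1}\int_0^\infty v^{r-1}e^{-vz}\,dv$ against the law of $X_\ga$ gives $\bE(X_\ga^{-r})=\Gamma(r)^{-1}\int_0^\infty v^{r-1}e^{-c_0v^\ga}\,dv<\infty$; this is exactly the claimed asymptotics $\bE(\tau_k^{-r})\sim\bE(X_\ga^{-r})\,k^{-r/\ga}$.

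The only genuinely delicate point is producing the $k$-uniform dominating function $v^{r-1}e^{-cv^\ga}$ so that dominated convergence applies in the case $\ga\in(0,1)$; this is where the small-$\lambda$ Tauberian behaviour of $f$ must be played off against its large-$\lambda$ exponential decay. The remaining ingredients (the strong law argument for $\ga>1$, the Tauberian asymptotics of $f$, and the identification of the limiting integral through the Laplace transform of the positive stable law) are routine.
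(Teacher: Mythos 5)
Your proposal is correct, but it takes a genuinely different route from the paper's for the case $\ga\in(0,1)$. The paper invokes the stable limit theorem ($\tau_k/k^{1/\ga}\Rightarrow X_\ga$ in distribution) and then upgrades convergence in distribution to convergence of the negative moment via uniform integrability, which it establishes through Chernoff's bound together with the Tauberian estimate $\bE(e^{\gl\tau_1})\le e^{-C|\gl|^\ga}$ for $\gl\le 0$ (and a citation to Exercise~3.2.5 in~\cite{Du10}). You instead bypass weak convergence entirely: the identity $\bE(\tau_k^{-r})=\Gamma(r)^{-1}\int_0^\infty\gl^{r-1}f(\gl)^k\,\dd\gl$ reduces everything to dominated convergence for an explicit integral, with the pointwise limit coming from the Abelian estimate $1-f(\gl)\sim c_0\gl^\ga$ and the dominating function from the two-regime bound $f(\gl)\le e^{-c_1\gl^\ga}$ on $(0,1]$ and $f(\gl)\le e^{-\gl}$ on $[1,\infty)$ — your case analysis at $v=k^{1/\ga}$ is the right way to glue these, and the resulting bound $f(k^{-1/\ga}v)^k\le e^{-cv^\ga}$ is exactly what is needed. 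Note that the analytic core is the same in both proofs (a stretched-exponential upper bound on the Laplace transform of $\tau_1$ near $0$), but your argument is more self-contained, avoids the UI black box, and yields the limit constant in closed form as $\Gamma(r)^{-1}\int_0^\infty v^{r-1}e^{-c_0v^\ga}\,\dd v$. One small point worth making explicit: your $X_\ga$, defined by $\bE(e^{-vX_\ga})=e^{-c_0v^\ga}$, is indeed the same variable as the distributional limit of $\tau_k/k^{1/\ga}$ used elsewhere in the paper (e.g.\ in Lemma~\ref{lem:asympt.0}), since your pointwise computation $f(k^{-1/\ga}v)^k\to e^{-c_0v^\ga}$ is precisely the convergence of the Laplace transforms of $\tau_k/k^{1/\ga}$; stating this makes the identification with the lemma's formulation (totally skewed to the right, shift $0$, scale depending on the law of $\tau_1$) immediate. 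The $\ga>1$ case is handled identically in both proofs.
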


\begin{proof}
If $\ga > 1$, the result follows by bounded convergence since, by the Renewal Theorem, $(\tau_k/k)^{- r}$ converges $\bP$-a.s.~to $\mu^{- r}$ and is bounded from above by $1$. If $\ga \in (0,1)$, we use that $\tau_k/k^{1/\ga}$ converges to an $\ga$-stable random variable $X_\ga$. The only complication is that $(\tau_k/k^{1/\ga})^{- r}$ is not bounded, but the result still holds by uniform integrability, namely, by Exercise~3.2.5 in~\cite{Du10}, it is enough to show that for some $\gga > r$ we have
\begin{equation} \label{eq:UItau}
\sup_{k\geq1} \bE(\{\tau_k/k^{1/\ga}\}^{- \gga}) < \infty.
\end{equation}
To show this, first note that 
\begin{equation} \label{ExpectationTail} 
\bE(\{\tau_k/k^{1/\ga}\}^{- \gga}) = \int_0^\infty \bP(\{\tau_k/k^{1/\ga}\}^{- \gga} > t) \, \dd t = \int_0^\infty \bP(\tau_k < k^{1/\ga} t^{- 1/\gga}) \, \dd t.
\end{equation}
With the use of Chernoff's bound, the probability inside the integral is bounded as
\begin{equation} \label{LDPBound}
\bP(\tau_k/k < k^{(1/\ga) - 1} t^{- 1/\gga}) \leq \exp \Big\{ - k \sup_{\gl\leq0} \{\gl  k^{(1/\ga) - 1} t^{- 1/\gga} - \log M(\gl)\} \Big\},
\end{equation}
where $M(\gl) := \bE(e^{\gl \tau_1})$. A standard Tauberian argument~\cite[(5.22) of Chapter~XIII]{Feller71} shows that there exists a constant $C > 0$ such that $M(\gl) \leq \exp({- C |\gl|^\ga})$ for all $\gl \leq 0$. This implies, for  $x > 0$, the following bound 
\begin{equation}
\sup_{\gl\leq0} \{\gl x - \log M(\gl)\} \geq \sup_{\gl\leq0} \{\gl x + C |\gl|^\ga\} = C_1 x^{- \ga/(1 - \ga)},
\end{equation}
where $C_1 := (1 - \ga) C^{(1 - \ga)^{-1}} \ga^{\ga (1 - \ga)^{-1}} > 0$. Consequently, the probability in \eqref{LDPBound} is bounded from above by $\exp\{- C_1 t^\frac{\ga}{(1 - \ga) \gga}\}$, which completes the proof.
\end{proof}


\begin{lemma} \label{FreeEnergyU}
Let $I$ be a compact subset of $(0,\infty)$, $\{K_\gga \colon \gga \in I\}$ a family of transient renewal inter-arrival laws with $K_\gga(\infty) = 1 - e^{- \gga}$ and mass renewal function $\{u_\gga(n)\}_{n\ge 0} = \{\bP_{K_\gga}(n\in\tau)\}_{n\ge 0}$ for each $\gga \in I$, and such that there are $\ga \geq 0$, $c_1,c_2 > 0$ and a slowly varying function $L$ so that
\begin{equation} \label{ReturnTimeUBound}
c_1 \frac{L(n)}{n^{1 + \ga}} \leq u_\gga(n) \leq c_2 \frac{L(n)}{n^{1 + \ga}}
\end{equation}
for all $n \geq 1$ and $\gga \in I$. Let $F_\gga$ be the free energy corresponding to the homopolymer defined by $K_\gga$. Then there are $C_1,C_2 > 0$ and a slowly varying function $\hat L$ so that 
\begin{equation} \label{FreeEnergyUBound} 
C_1 \leq \frac{F_\gga(\gga + h)}{h^{(1/\ga) \vee 1} \hat L(1/h)} \leq C_2
\end{equation}
for all $h \in (0,1]$ and $\gga \in I$. For $\ga = 0$, \eqref{FreeEnergyUBound}  means that for $h \searrow 0$, $F_\gga(\gga + h)$ vanishes faster than any polynomial.
\end{lemma}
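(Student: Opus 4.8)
The plan is to turn the free energy into the solution of a scalar fixed-point equation and then carry out the classical Tauberian analysis of the homogeneous pinning critical exponent, keeping every constant uniform in $\gga$. Because $K_\gga(\infty)=1-e^{-\gga}$, the tilted law $e^{\gga}K_\gga$ is recurrent, the critical pinning strength of the homopolymer with inter-arrival $K_\gga$ is $\gga$, and by the standard description (see \cite[Section~1.2.2]{Gi2007}) $F=F_\gga(\gga+h)$ is, for $h>0$, the unique positive root of $e^{\gga+h}\sum_{n\ge1}e^{-Fn}K_\gga(n)=1$. I would then use the renewal generating-function identity $\sum_{n\ge0}u_\gga(n)z^n=\big(1-\sum_{n\ge1}K_\gga(n)z^n\big)^{-1}$, valid on $[0,1]$ since $\sum_{n\ge1}K_\gga(n)=e^{-\gga}<1$ (and which at $z=1$ gives $\sum_{n\ge0}u_\gga(n)=(1-e^{-\gga})^{-1}$, matching Proposition~\ref{meanReturns}), to rewrite the equation as $\sum_{n\ge0}u_\gga(n)e^{-Fn}=(1-e^{-\gga-h})^{-1}$, equivalently
\[
\Phi_\gga(F):=\sum_{n\ge1}u_\gga(n)\big(1-e^{-Fn}\big)=\frac{1}{1-e^{-\gga}}-\frac{1}{1-e^{-\gga-h}}=\frac{e^{-\gga}(1-e^{-h})}{(1-e^{-\gga})(1-e^{-\gga-h})}.
\]
Since $I\subset(0,\infty)$ is compact, the right-hand side is comparable to $h$ with multiplicative constants depending only on $I$, uniformly for $\gga\in I$ and $h\in(0,1]$; so the task reduces to estimating $\Phi_\gga(F)$, uniformly in $\gga$, as $F\downarrow0$.

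For that I would sandwich $\Phi_\gga$ using $(1-e^{-1})(Fn\wedge1)\le 1-e^{-Fn}\le Fn\wedge1$ together with \eqref{ReturnTimeUBound}, obtaining constants $0<c_1'<c_2'$ depending only on $c_1,c_2$ with $c_1'\,\Theta(F)\le\Phi_\gga(F)\le c_2'\,\Theta(F)$ for all $\gga\in I$, where $\Theta(F):=F\sum_{1\le n\le1/F}L(n)n^{-\ga}+\sum_{n>1/F}L(n)n^{-(1+\ga)}$ no longer depends on $\gga$. Karamata's theorem (\cite[Propositions~1.5.8 and~1.5.10]{BGT89}) then gives: for $\ga\in(0,1)$ both sums are of order $F^{\ga}L(1/F)$; for $\ga>1$ the first sum converges to a positive constant while the second is $o(F)$, so $\Theta(F)\asymp F$; for $\ga=1$, $\Theta(F)\asymp F\,\widetilde L(1/F)$ with $\widetilde L(x):=\sum_{1\le n\le x}L(n)/n$ slowly varying; and for $\ga=0$, after noting that $\sum_n u_\gga(n)<\infty$ forces $\sum_nL(n)/n<\infty$ via \eqref{ReturnTimeUBound}, $\Theta(F)\asymp\psi(1/F)$ for a slowly varying $\psi$ with $\psi(x)\to0$. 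All implied constants depend only on $\ga$, $L$, $c_1$, $c_2$.

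Finally I would invert. For $\ga>0$ the previous steps give $F^{\ga\wedge1}\,\ell_\ga(1/F)\asymp h$ with $\ell_\ga$ a fixed slowly varying function ($\ell_\ga=L$ if $\ga\in(0,1)$, $\ell_\ga\equiv1$ if $\ga>1$, $\ell_1=\widetilde L$); since $y\mapsto y^{-(\ga\wedge1)}\ell_\ga(y)$ is regularly varying of nonzero index $-(\ga\wedge1)$, hence asymptotically monotone with asymptotic inverse regularly varying of index $-1/(\ga\wedge1)$ (\cite[Theorems~1.5.3 and~1.5.12]{BGT89}), this yields a slowly varying $\hat L$ with $F\asymp h^{1/(\ga\wedge1)}\hat L(1/h)=h^{(1/\ga)\vee1}\hat L(1/h)$, uniformly in $\gga\in I$ because every constant involved is $\gga$-free. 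For $\ga=0$, $\psi(1/F)\asymp h$ with $\psi$ slowly varying and $\psi(x)\to0$ forces $1/F$ to grow faster than any power of $1/h$ (a slowly varying function is dominated by no positive power of its argument), i.e.\ $F_\gga(\gga+h)$ vanishes faster than any polynomial in $h$, again uniformly. I expect the only genuinely delicate parts to be the boundary exponents $\ga\in\{0,1\}$, where true slowly varying corrections enter $\Theta$ and the inversion of the regularly varying relation needs care (and for $\ga=0$ the statement degenerates to super-polynomial decay, with no power-law rate); by contrast the uniformity in $\gga$ is essentially free, since \eqref{ReturnTimeUBound} removes all $\gga$-dependence before the Tauberian analysis and $\gga$ re-enters only through the manifestly uniform comparison $\tfrac1{1-e^{-\gga}}-\tfrac1{1-e^{-\gga-h}}\asymp h$.
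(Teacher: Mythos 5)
Your proposal is correct and follows essentially the same route as the paper: you characterize $F_\gga(\gga+h)$ through the implicit equation, pass to the mass renewal generating function $\hat u_\gga$ so that \eqref{ReturnTimeUBound} yields two-sided, $\gga$-free bounds (your $\Phi_\gga(F)$ is exactly the paper's $A_\gga(e^{-F})=\hat u_\gga(1)-\hat u_\gga(e^{-F})$), and then invert a regularly varying relation via the same BGT results. The only difference is cosmetic: you estimate $\sum_n u_\gga(n)(1-e^{-Fn})$ directly by the sandwich $1-e^{-Fn}\asymp Fn\wedge 1$ plus Karamata, where the paper expands $A_\gga$ as a power series and cites~\cite[Corollary 1.7.3]{BGT89}.
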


Recall that $F_\gga$ is zero exactly in $(- \infty,\gga]$ and positive elsewhere. 

\begin{proof} For $h > 0$, $F_\gga(\gga + h)$ is the unique solution in $x$ of the equation 
\begin{equation}
\sum_{n=1}^\infty K_\gga(n) e^{- n x} = e^{- (\gga + h)},
\end{equation}
which we write as 
\begin{equation} \label{FreeEnergyEquation}
\Psi_\gga(x) = 1 - e^{- h},
\end{equation}
with
\begin{equation}
\Psi_\gga(x) = 1 - e^\gga \sum_{n=1}^\infty K_\gga(n) e^{- n x}.
\end{equation}
Now for any function $f : \bbN_0 \to [0,\infty)$, we define $\hat f(z) = \sum_{n=0}^\infty f(n) z^n$ for all $z \in [0,1]$. Then the equality 
\begin{equation}
u_\gga(n) = \ind_{\{n = 0\}} + \sum_{j=1}^n K_\gga(j) u_\gga(n-j)
\end{equation}
gives 
\begin{equation}
\hK_\gga(z) = 1 - \frac{1}{\hat u_\gga(z)}.
\end{equation} 
In particular, $e^{- \gga} = 1 - (\hat u_\gga(1))^{-1}$, so that 
\begin{equation}\label{PsiAndA}
\Psi_\gga(x) = 1 - e^\gga \hK_\gga(e^{- x}) = e^\gga \left( \frac{1}{\hat u_\gga(e^{- x})} - \frac{1}{\hat u_\gga(1)} \right) = \frac{e^\gga}{\hat u_\gga(e^{- x}) \hat u_\gga(1)} \, A_\gga(e^{- x}),
\end{equation}
with
\begin{equation}
A_\gga(z) := \hat u_\gga(1) - \hat u_\gga(z) = (1 - z) \sum_{n=1}^\infty u_\gga(n) \sum_{k=0}^{n-1} z^k = (1 - z) \sum_{k=0}^\infty z^k \sum_{n=k+1}^\infty u_\gga(n).
\end{equation}
Of interest to us is the behavior of $\Psi_\gga$ close to 0, and thus of $A_\gga$ close 1. The following claim addresses the issue. To state it, we let $m := \sum_{n=1}^\infty L(n)/n^\ga$.\\

\smallskip

\textsc{Claim}: (a) If $\ga = 0$, then there are $0 < C_3 < C_4$ so that
\begin{equation} \label{claim_a_zero}
C_3 \leq \frac{A_\gga(z)}{L_0((1 - z)^{-1})} \leq C_4,
\end{equation}
for all $z \in [1/2,1]$, where $L_0$ is the slowly varying function defined in \eqref{L0sv}.

\smallskip

(b) If $\ga \in (0,1)$, then there are $0 < C_3 < C_4$ so that 
\begin{equation}
C_3 \leq \frac{A_\gga(z)}{(1 - z)^\ga L((1 - z)^{-1})} \leq C_4,
\end{equation}
for all $z \in [1/2,1]$.

\smallskip

(c) If $\ga = 1$ and $m = \infty$, then there is a slowly varying function $L_1$ and $0 < C_3 < C_4$ so that
\begin{equation}
C_3 \leq \frac{A_\gga(z)}{(1 - z) L_1((1 - z)^{-1})} \leq C_4,
\end{equation}
for all $z \in [1/2,1]$.

\smallskip

(d) If $m < \infty$ then there are $0 < C_3 < C_4$ so that 
\begin{equation}
C_3 \leq \frac{A_\gga(z)}{1 - z} \leq C_4,
\end{equation}
for all $z\in[1/2, 1]$.\\

\medskip

\textsc{Proof of the claim}: By the bounds we have on $u_\gga$, it suffices to examine the behavior of
\begin{equation}
Q(z) := \sum_{k=0}^\infty z^k \sum_{n=k+1}^\infty \frac{L(n)}{n^{1 + \ga}}.
\end{equation}
Denote by $q_k$ the coefficient of $z^k$ in this power series. We have $q_k \sim \frac{L(k)}{\ga k^\ga}$, for $\ga > 0$, by Proposition~1.5.10 in~\cite{BGT89},  while for $\ga = 0$, $q_k$ is slowly varying (Proposition~1.5.9.b in~\cite{BGT89}). Thus,
\begin{equation}
\sum_{k=0}^r q_k \, \,
\begin{cases} 
\sim r \, q_r & \text{ if } \ga = 0,\\
\sim \frac{L(r) r^{1 - \ga}}{\ga (1 - \ga)}, & \text{ if } \ga \in (0,1),\\
\text{is slowly varying } & \text{ if $\ga=1$ and } m = \infty,\\
\to m & \text{ if } m < \infty.
\end{cases}
\end{equation}
This follows from Proposition~1.5.8 and Proposition~1.5.9(a) in~\cite{BGT89}. Then, parts (a)-(c) of the claim follow from Corollary~1.7.3 in~\cite{BGT89}, while for the case $m < \infty$ we just note that $Q(z)$ maps $[1/2,1]$ to a compact set of $(0,\infty)$. The corollary specifies that for $L_0$  in \eqref{claim_a_zero} we can take
\begin{equation} \label{L0sv}
L_0(y) = q_{[y]}
\end{equation}
for all $y \in [0,\infty)$.\\

We continue with the proof of the lemma. The claim above and \eqref{PsiAndA} give that there are constants $0 < C_5 < C_6$ and a slowly varying function $L_2$ so that
\begin{equation}
C_5 \leq \frac{\Psi_\gga(x)}{x^{\ga \wedge 1} L_2(1/x)} \leq C_6,
\end{equation}
for all $x \in (0, \log 2]$ and $\gga \in I$.

Let $C > 0$ be fixed. For $\ga > 0$, by Proposition~1.5.15 in~\cite{BGT89}, there is a slowly varying function $\hat L$ so that a solution $x_C(h)$ of $x^{\ga \wedge 1} L_2(1/x) = (1 - e^{- h})/C$ is asymptotically equivalent to a constant multiple of $h^{1 \vee 1/\ga} \hat L(1/h)$ as $h \to 0^+$ ($\hat L$ is  the same for all $C$). If $\ga = 0$, call $x_C(h)$ the smallest solution of $L_0(1/x) = (1 - e^{- h})/C$. Then $x_C(h) = 1/L_0^{-1}((1 - e^{- h})/C)$ with $L_0^{-1}$ an obviously defined ``inverse'' of $L_0$. It is easy to see, by bounding $q_k$ from below by $\sum_{n=k+1}^\infty L(n)/n^{1 + \gep}$ for any $\gep > 0$, that each $x_C$ goes to zero faster than any power of $h$. Since $\Psi_\gamma(x)$ is increasing in $x$, for each $\gga \in I$ the solution of \eqref{FreeEnergyEquation} is between $x_{C_6}(h)$ and $x_{C_5}(h)$ ($x_{C_6}(h) < x_{C_5}(h)$), which finishes the proof of the lemma.
\end{proof}


\begin{proposition} \label{pr:asympt.hom.free.energy}
Let $F$ be the free energy of the homogeneous pinning model for the renewal $\tau$ with return exponent $\ga > 1$, as in \eqref{ReturnTimeDistributions}. Then, as $h \searrow 0$,
\begin{equation} \label{eq:asymptoticsFh}
\begin{aligned}
F(h) - h F'(0) &\sim \left\{
\begin{array}{ll}
\frac{1}{2 \mu^3} \var(\tau_1) h^2 & \mbox{ if } \ga > 2\\
\frac{c_K}{2 \mu^3} |\log h| h^2 & \mbox{ if } \ga = 2\\
\frac{c(\ga)}{\mu^{\ga + 1}} \, h^{\ga} & \mbox{ if } \ga \in (1,2)
\end{array}
\right. ,\\
F'(h) - F'(0) &\sim \left\{
\begin{array}{ll}
\frac{1}{\mu^3} \var(\tau_1) h & \mbox{ if } \ga > 2\\
\frac{c_K}{\mu^3} |\log h| h & \mbox{ if } \ga = 2\\
\frac{\ga c(\ga)}{\mu^{\ga + 1}} \, h^{\ga - 1} & \mbox{ if } \ga \in(1,2)
\end{array}
\right. ,
\end{aligned}
\end{equation}
where $c(\ga) = c_K \int_0^\infty (e^{- t} - 1 + t) t^{- (1 + \ga)} \, \dd t$.
\end{proposition}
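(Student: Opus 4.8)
The plan is to reduce everything to the classical description of the homogeneous free energy: for $h>0$, $F(h)$ is the unique solution $x>0$ of $\hat K(x)=e^{-h}$, where $\hat K(x):=\sum_{n\ge1}K(n)e^{-nx}=\bE(e^{-x\tau_1})$, while $F(h)=0$ for $h\le0$ (a standard fact about homogeneous pinning, see \cite[Section~1.2.2]{Gi2007} and the proof of Lemma~\ref{FreeEnergyU} for the analogous transient case). Since $\ga>1$ we have $\mu=\bE(\tau_1)<\infty$; differentiating $\hat K(F(h))=e^{-h}$ at $h=0^+$ and using $F(0)=0$ gives $F'(0)=1/\mu$, which also pins down the meaning of $F'(0)$ in the statement. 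I would then set $\phi(x):=1-\hat K(x)=\bE(1-e^{-x\tau_1})$ and
\[
\psi(x):=\mu x-\phi(x)=\bE\big(g(x\tau_1)\big),\qquad g(t):=e^{-t}-1+t\ge0,
\]
so that the defining relation reads $\mu F(h)-\psi(F(h))=1-e^{-h}=h-\tfrac12 h^2+O(h^3)$, and the first line of \eqref{eq:asymptoticsFh} will follow once the small-$x$ behaviour of $\psi$ is known.

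For that behaviour I would use the identity $g(x\tau_1)=\int_0^{\tau_1}x(1-e^{-xs})\,\dd s$, hence $\psi(x)=\int_0^\infty x(1-e^{-xs})\,\bP(\tau_1>s)\,\dd s$, together with $\bP(\tau_1>s)=\sum_{n>s}K(n)\sim \tfrac{c_K}{\ga}s^{-\ga}$, and split according to the value of $\ga$. If $\ga>2$ then $\bE(\tau_1^2)=2\int_0^\infty s\,\bP(\tau_1>s)\,\dd s<\infty$ and dominated convergence (using $0\le x(1-e^{-xs})\le x^2 s$ and $x^{-1}(1-e^{-xs})\to s$) gives $\psi(x)\sim\tfrac12\bE(\tau_1^2)x^2$. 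If $\ga=2$ one splits the integral at $s=1/x$: the range $s\le1/x$ contributes $\tfrac{c_K}{2}x^2\!\int_x^{1}(1-e^{-u})u^{-2}\,\dd u\,(1+o(1))\sim\tfrac{c_K}{2}x^2|\log x|$, while $s>1/x$ contributes only $O(x^2)$, so $\psi(x)\sim\tfrac{c_K}{2}x^2|\log x|$. If $\ga\in(1,2)$, the change of variables $u=xs$ and dominated convergence (the integral $\int_0^\infty(1-e^{-u})u^{-\ga}\,\dd u$ being finite because $1<\ga<2$) give $\psi(x)\sim\tfrac{c_K}{\ga}\big(\int_0^\infty(1-e^{-u})u^{-\ga}\,\dd u\big)x^\ga=c(\ga)x^\ga$, the last equality being the integration-by-parts identity $\int_0^\infty(1-e^{-u})u^{-\ga}\,\dd u=\ga\int_0^\infty g(u)u^{-(1+\ga)}\,\dd u$. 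Plugging these into $\mu F(h)-\psi(F(h))=h-\tfrac12h^2+O(h^3)$ and writing $F(h)=h/\mu+R(h)$ with $R(h)=o(h)$: when $\ga\le2$ the term $\psi(F(h))\sim\psi(h/\mu)$ dominates $h^2$, giving $\mu R(h)\sim\psi(h/\mu)$; when $\ga>2$, $\psi(F(h))$ contributes $\tfrac{\bE(\tau_1^2)}{2\mu^2}h^2$, which combines with $-\tfrac12h^2$ to give $\mu R(h)\sim\tfrac{\var(\tau_1)}{2\mu^2}h^2$. In all three cases this yields the first line of \eqref{eq:asymptoticsFh}.

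For the derivative I would differentiate $\hat K(F(h))=e^{-h}$ to get $F'(h)=e^{-h}\big(\rho^\ast(F(h))\big)^{-1}$ with $\rho^\ast(x):=\sum_{n\ge1}nK(n)e^{-nx}=\bE(\tau_1 e^{-x\tau_1})$, and set $\rho(x):=\mu-\rho^\ast(x)=\bE\big(\tau_1(1-e^{-x\tau_1})\big)=\int_0^x\bE(\tau_1^2 e^{-t\tau_1})\,\dd t$. The small-$x$ asymptotics of $\rho$ follow from those of $\bE(\tau_1^2e^{-t\tau_1})=\sum_n n^2K(n)e^{-nt}$ by a standard Abelian/Karamata argument: this tends to $\bE(\tau_1^2)$ if $\ga>2$, is $\sim c_K|\log t|$ if $\ga=2$, and is $\sim c_K\Gamma(2-\ga)t^{\ga-2}$ if $\ga\in(1,2)$; integrating in $t$ gives $\rho(x)\sim\bE(\tau_1^2)x$, $\rho(x)\sim c_K x|\log x|$, and $\rho(x)\sim c_K\tfrac{\Gamma(2-\ga)}{\ga-1}x^{\ga-1}=\ga c(\ga)x^{\ga-1}$ respectively (the last identity using $\Gamma(2-\ga)=(1-\ga)\Gamma(1-\ga)$ and the integration-by-parts formula above). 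Then $F'(h)=e^{-h}\big(\mu-\rho(F(h))\big)^{-1}=\mu^{-1}\big(1+\mu^{-1}\rho(F(h))+\cdots\big)(1-h+\cdots)$ together with $F(h)\sim h/\mu$ gives the second line of \eqref{eq:asymptoticsFh} (when $\ga>2$ the $-h$ term also contributes, producing $\var(\tau_1)$; when $\ga\le2$, $\rho(F(h))$ dominates $h$). Alternatively the second line follows from the first by convexity of $h\mapsto F(h)$, which makes $F'$ non-decreasing, together with the monotone density theorem.

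The routine case is $\ga>2$, where only dominated convergence is used. The main obstacle is the regime $\ga\le2$: there one must execute the Abelian/Tauberian estimates with some care — controlling the error made in passing from the series $\psi(x)=\sum_nK(n)g(nx)$ and $\rho(x)=\int_0^x\!\sum_n n^2K(n)e^{-nt}\,\dd t$ to the corresponding integrals, and in replacing $K(n)$ and $n^2K(n)$ by their regularly varying equivalents uniformly over the relevant ranges — since it is precisely this step that produces the anomalous logarithm at $\ga=2$ and the fractional exponent $h^\ga$ (resp.\ $h^{\ga-1}$) for $\ga\in(1,2)$, and that fixes the constants $\tfrac{c_K}{2}$ and $c(\ga)$.
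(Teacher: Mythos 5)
Your proposal is correct and follows essentially the same route as the paper: both characterize $F(h)$ through the implicit relation $\bE(e^{-F(h)\tau_1})=e^{-h}$, derive the small-argument Abelian expansions of the Laplace transform $1-\hat K(x)$ (your $\psi$) and of its derivative (your $\rho$) according to the value of $\ga$, and obtain the second line by differentiating the implicit relation (the convexity/monotone-density remark is a valid alternative). The only cosmetic difference is that you establish the Abelian asymptotics directly via the survival-function representation and tail estimates on $\bP(\tau_1>s)$, where the paper packages the same computations in its Lemma~\ref{seriesAsymptotics} (including the integration-by-parts identity fixing $c(\ga)$), so no further changes are needed.
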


\begin{proof}
Recall that $F'(0) = 1/\mu$, and define $\phi(F) = \bE(e^{- F \tau_1})$ for $F\geq 0$. Using Lemma \ref{seriesAsymptotics}, we get that as $F\to 0$,
\begin{equation}\label{eq:exp_phi}
\phi(F) = 1 - \mu F + [1 + o(1)] \left\{
\begin{array}{ll}
\frac{1}{2} E(\tau_1^2) F^2 & \mbox{ if } \ga > 2,\\
c_K F^2|\log F|/2 & \mbox{ if } \ga =2,\\
 c(\ga) F^\ga   & \mbox{ if } \ga \in (1,2),
\end{array}
\right.
\end{equation}
and
\begin{equation}\label{eq:exp_phi_prime}
\phi'(F) = - \mu + [1 + o(1)] \left\{
\begin{array}{ll}
E(\tau_1^2) F & \mbox{ if } \ga > 2,\\
c_K F |\log F| & \mbox{ if } \ga =2,\\
\ga c(\ga) F^{\ga-1} & \mbox{ if } \ga \in (1,2),
\end{array}
\right.
\end{equation}
with $c(\ga)$ as in the statement of the proposition. We used  the fact that for the functions $A_2, A_4$ of that lemma it holds $A_2(\ga)=\ga A_4(1+\ga)$, which follows by integration by parts. In combination with $\phi(F(h)) = e^{- h} = 1 - h + \frac{1}{2} h^2 [1 + o(1)]$, see~\cite[Section 2.1, Equation (2.2)]{Gi2007}, relation \eqref{eq:exp_phi} yields the first result. To get the second result, differentiate the previous relation to get $F'(h) \phi' (F(h)) = - \exp(- h)$, which we expand around $h = 0$, and use relation \eqref{eq:exp_phi_prime}.
\end{proof}

\begin{lemma}\label{seriesAsymptotics} Let $(r_k)_{k\ge1}$ be a sequence of positive numbers so that $r_k\sim k^{-\gl}$ as $k\to\infty$ for some $\gl>0$. Then

(i)
\beq
\sum_{k=1}^\infty (1-e^{-xk})r _k\sim \begin{cases} x A_1& \text{ if } \gl>2, \\
 x |\log x| & \text{ if } \gl=2, \\
  x^{\gl-1}A_2(\gl) & \text{ if } \gl\in(1, 2) \\
\end{cases}
\eeq
as $x\to 0^+$, where $A_1=\sum_{k=1}^\infty k r_k$ and $A_2(\gl)=\int_0^\infty (1-e^{-t}) t^{-\gl}\, dt$ are positive and finite.

(ii)

\beq
\sum_{k=1}^\infty (e^{-xk}-1+xk) r_k\sim \begin{cases} x^2 A_3& \text{ if } \gl>3, \\
 x^2|\log x|/2 & \text{ if } \gl=3, \\
  x^{\gl-1}A_4(\gl) & \text{ if } \gl\in(2, 3) \\
\end{cases}
\eeq
as $x\to 0^+$, where $A_3=(1/2)\sum_{k=1}^\infty k^2 r_k$ and $A_4(\gl)=\int_0^\infty (e^{-t}-1+t) t^{-\gl}\, dt$  are positive and finite.

(iii)
\beq
\sum_{k=1}^\infty e^{-xk} r_k \sim
\begin{cases}
 |\log x| & \text{ if } \gl = 1,\\
 A_5(\gl) x^{\gl-1}& \text{ if } \gl \in (0,1)
 \end{cases}
\eeq
as $x\to 0^+$, where $A_5(\gl)=\int_0^\infty e^{-t}t^{-\gl}\, dt$ is positive and finite.
\end{lemma}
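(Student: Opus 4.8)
The final statement is Lemma~\ref{seriesAsymptotics}, a collection of Tauberian-type asymptotics for series $\sum_{k\ge1} g(xk)\,r_k$ with $r_k\sim k^{-\lambda}$. Here is how I would organize the proof.

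\textbf{Reduction to Riemann-sum / integral comparison.} The plan is to treat each of the three items by the same mechanism: for a fixed ``test function'' $g$ (namely $g(t)=1-e^{-t}$ in (i), $g(t)=e^{-t}-1+t$ in (ii), $g(t)=e^{-t}$ in (iii)) the sum $S(x):=\sum_{k\ge1} g(xk)\,r_k$ should behave, after suitable normalization, like the integral $\int_0^\infty g(t)\,t^{-\lambda}\,dt$ when that integral converges, and like the truncated sum $\sum_k g(xk)k^{-\lambda}$ with an explicit divergent leading term otherwise. First I would replace $r_k$ by $k^{-\lambda}$: writing $r_k = k^{-\lambda}(1+\varepsilon_k)$ with $\varepsilon_k\to0$, the contribution of the $\varepsilon_k$ part is negligible compared to the main term in each regime — this is the routine but slightly delicate ``$o$''-bookkeeping step, handled by splitting the sum at $k\le N$ and $k>N$ and letting $N$ grow slowly. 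So it suffices to prove each asymptotic with $r_k$ replaced exactly by $k^{-\lambda}$.

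\textbf{Item-by-item analysis.} For (iii) with $\lambda\in(0,1)$: $x\sum_k e^{-xk}(xk)^{-\lambda}x^{\lambda-1}$ is a Riemann sum for $x^{\lambda-1}\int_0^\infty e^{-t}t^{-\lambda}\,dt = A_5(\lambda)x^{\lambda-1}$; monotonicity of $t\mapsto e^{-t}t^{-\lambda}$ on each side of its (harmless) singularity, plus integrability at $0$ since $\lambda<1$, justifies the convergence of the Riemann sum. For $\lambda=1$, $\sum_k e^{-xk}/k = -\log(1-e^{-x}) \sim |\log x|$ directly. For (i), note $\sum_k(1-e^{-xk})r_k$: when $\lambda>2$, $1-e^{-xk}\le xk$ and dominated convergence (Taylor: $1-e^{-xk}= xk(1+o(1))$ for fixed $k$) gives $xA_1$ with $A_1=\sum_k kr_k<\infty$; when $\lambda\in(1,2)$ the integral $\int_0^\infty(1-e^{-t})t^{-\lambda}\,dt=A_2(\lambda)$ converges (at $0$ because the integrand is $\sim t^{1-\lambda}$ with $1-\lambda>-1$, at $\infty$ because $\lambda>1$), and the Riemann-sum argument gives $x^{\lambda-1}A_2(\lambda)$; the borderline $\lambda=2$ is the logarithmic case, obtained by writing $\sum_k(1-e^{-xk})k^{-2}$, splitting at $k\sim1/x$, and estimating $\sum_{k\le1/x}xk\cdot k^{-2}\sim x|\log x|$ while the tail $\sum_{k>1/x}k^{-2}=O(x)$ is lower order. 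Item (ii) is entirely parallel, with $g(t)=e^{-t}-1+t=O(t^2)$ near $0$ and $O(t)$ near $\infty$, shifting all exponent thresholds up by one: $\lambda>3$ gives $x^2A_3$ by dominated convergence, $\lambda\in(2,3)$ gives $x^{\lambda-1}A_4(\lambda)$ by Riemann sums (integrand $\sim t^{2-\lambda}$ at $0$, $2-\lambda>-1$; $\sim t^{1-\lambda}$ at $\infty$, integrable since $\lambda>2$), and $\lambda=3$ is the $x^2|\log x|/2$ log-correction, from $\sum_{k\le1/x}\frac12(xk)^2 k^{-3}\sim\frac12 x^2|\log x|$.

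\textbf{Finiteness and positivity of the constants.} Finally I would verify that $A_1,\dots,A_5$ and $A_2(\lambda),A_4(\lambda)$ are finite and strictly positive: $A_1=\sum kr_k$ converges because $r_k\sim k^{-\lambda}$ with $\lambda>2$, similarly $A_3$ with $\lambda>3$; the integrals $A_2,A_4,A_5$ were shown convergent in the case analysis above, and positivity is immediate since all integrands are strictly positive on $(0,\infty)$.

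\textbf{Main obstacle.} The only genuinely fiddly points are the two logarithmic borderline cases ($\lambda=2$ in (i), $\lambda=3$ in (ii)), where neither dominated convergence nor a clean Riemann-sum limit applies and one must extract the $|\log x|$ by a careful dyadic or $k\le 1/x$ vs.\ $k>1/x$ split, controlling the error terms; and the preliminary step of absorbing the $\varepsilon_k=r_k k^{\lambda}-1$ correction uniformly across all the regimes. Everything else is a standard comparison of a sum with its associated integral. All of this is classical (it can alternatively be deduced from Karamata/de Haan Tauberian theorems as in Bingham–Goldie–Teugels), so I would present the Riemann-sum version for transparency and only sketch the log cases.
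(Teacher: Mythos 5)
Your proposal is correct, but it takes a more self-contained route than the paper. The paper's proof is short: it handles $\lambda\in(1,2]$ in (i) and $\lambda\in(0,1]$ in (iii) by citing Karamata-type Tauberian results (Corollary 8.1.7 and Theorem 1.7.1 of Bingham--Goldie--Teugels), treats $\lambda>2$ in (i) exactly as you do --- writing $x^{-1}\sum_k(1-e^{-xk})r_k=\sum_k\frac{1-e^{-xk}}{xk}\,k r_k$ and applying dominated convergence with the bound $(1-e^{-u})/u\le 1$ and $\sum_k k r_k<\infty$ --- and leaves (ii) to the reader as ``similar arguments''. You instead prove all regimes by hand: reduction of $r_k$ to $k^{-\lambda}$, Riemann-sum comparison with the integrals $A_2,A_4,A_5$ in the intermediate ranges, the explicit identity $\sum_k e^{-xk}/k=-\log(1-e^{-x})$ at $\lambda=1$, and a split at $k\le 1/x$ with Taylor expansion of the test function for the logarithmic borderlines $\lambda=2$ and $\lambda=3$ (your constants $1$ and $1/2$ check out, and the error terms $O(x)$, $O(x^2)$ are indeed lower order). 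This buys transparency and independence from the cited Tauberian theorems, at the cost of more bookkeeping; the paper buys brevity by outsourcing the regularly-varying cases. One small wording caveat: your blanket claim that ``it suffices to prove each asymptotic with $r_k$ replaced exactly by $k^{-\lambda}$'' cannot apply to the regimes $\lambda>2$ in (i) and $\lambda>3$ in (ii), whose limiting constants $A_1=\sum_k k r_k$ and $A_3=\tfrac12\sum_k k^2 r_k$ depend on the whole sequence and not merely on its tail; but this is harmless, since in those cases your actual argument (dominated convergence applied directly to $r_k$, as in the paper) does not use the reduction, and in all remaining cases the limit constant depends only on the asymptotics of $r_k$, so the $\varepsilon_k$-absorption argument you sketch is valid.
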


%
%
%

\begin{proof}
References for $\gl\in (1,2]$ in (i) and $\gl\in(0,1]$ in (iii) are Corollary~8.1.7 and Theorem~1.7.1 in \cite{BGT89}, respectively. If $\gl>2$ in (i), we have
$$\frac{1}{h}\sum_{k=1}^\infty (1-e^{-hk}) r_k=\sum_{k=1}^\infty \frac{1-e^{-hk}}{hk} k r_k$$
and to the last sum we apply the Dominated Convergence Theorem since $(1-e^{-x})/x$ is positive and bounded from above by 1 while $\sum_{k=1}^\infty k r_k<\infty$. Case (ii) follows from similar arguments and is left to the reader.
\end{proof}

In the following two lemmas we denote by $\bP_\gt$ the renewal such that $\bP_\gt(\tau_1 = n) = e^{\gt - F(\gt) n} K(n)$, where $\gt\le 0$. Note that $\bP_0$ coincides with $\bP$. We also define $\phi_\gt(t) = \bE_\gt(e^{i t \tau_1})$, for $t\in\bbR$.

\begin{lemma} \label{lem:UnifEstimatesCFs}
Let $\ga > 1$.  There exist $\gep>0$, $\gt_0>0$ and $c>0$ such that, for $|t| \le \gep$,
\begin{enumerate}
\item $t/c \le |\im(1- \phi_\gt(t))| \le ct$, uniformly in $0 \le \gt \le \gt_0$,
\item $|\im(\phi_0(t) -\phi_\gt(t))| \le \gep_\gt t$, 
\item $0 \le \re(1- \phi_\gt(t)) \le ct^{2\wedge \ga}{(1+ |\log t|\ind_{\{\ga = 2\}})}$, uniformly in $0\le \gt \le \gt_0$,
\item $|\re(\phi_0(t) -\phi_\gt(t))| \le \left\{ \begin{array}{ll}
c \gt^{\ga / 6} t^\ga& \mbox{if } \ga\in(1,2) \mbox{ and } 0 < \gt^{1/2} \le t,\\
{ c \gt^{1/3} t^2 |\log t|} & \mbox{if } \ga=2 \mbox{ and } 0 < \gt^{1/2} \le t|\log t|^{1/2},\\
\gep_\gt t^2& \mbox{if } \ga> 2,
\end{array} \right.$
\end{enumerate}
where $\lim_{\gt\to 0} \gep_\gt = 0$.
\end{lemma}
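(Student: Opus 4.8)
The plan is to expand $\phi_\gt(t)=\bE_\gt(e^{it\tau_1})=\sum_{n\ge1}e^{itn}e^{\gt-F(\gt)n}K(n)$ around $t=0$ and to control the remainders using the power tail $K(n)=c_Kn^{-(1+\ga)}(1+o(1))$ and the bound $e^{-F(\gt)n}\le1$. Two uniform a priori facts are needed. First, $\mu_\gt:=\bE_\gt(\tau_1)$ lies in a fixed compact subset of $(0,\infty)$ for all $\gt\in[0,\gt_0]$: the upper bound is $e^{\gt_0}\mu$, and the lower bound follows by truncating the defining series and using $e^{-F(\gt_0)N}\ge\tfrac12$ for $\gt_0$ small. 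Second, $F(\gt)\asymp\gt$ uniformly on $[0,\gt_0]$, which follows from the convexity of $h\mapsto F(h)$ together with $F(0)=0$ and $F'(0)=1/\mu$ (so $\gt/\mu\le F(\gt)\le(F(\gt_0)/\gt_0)\,\gt$); in particular the refined asymptotics of $F$ from Proposition~\ref{pr:asympt.hom.free.energy} are not needed. Writing $a_n:=1-e^{\gt-F(\gt)n}$, these facts give, for every fixed $N$, the uniform bounds $|a_n|\le C_N\gt$ for $n\le N$ and $|a_n|\le\min(1,C\gt n)$ for $n\ge N_0$, where $N_0$ is a fixed multiple of $\mu$ chosen so that $\gt-F(\gt)n\le0$ there.

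Items (1) and (3) concern $\phi_\gt$ itself. For (1), $\im(1-\phi_\gt(t))=-\bE_\gt(\sin(t\tau_1))=-t\mu_\gt+\bE_\gt(t\tau_1-\sin(t\tau_1))$; bounding $0\le u-\sin u\le\min(u^3/6,2u)$ and splitting the expectation at $\tau_1=1/t$ shows the last term is at most $C(t^3\sum_{n\le1/t}n^3K(n)+t\sum_{n>1/t}nK(n))$, which is $o(t)$ uniformly in $\gt$ for every $\ga>1$ (the first piece is $O(t^{\ga\wedge3})$ up to a logarithm, the second a vanishing tail of $\sum nK(n)<\infty$); together with $\mu_\gt\asymp1$ this yields $|\im(1-\phi_\gt(t))|\asymp t$. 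For (3), $\re(1-\phi_\gt(t))=\bE_\gt(1-\cos(t\tau_1))\ge0$ and is at most $e^{\gt_0}\sum_{n\ge1}(1-\cos(tn))K(n)$; bounding $1-\cos u\le\min(u^2/2,2)$ and splitting at $n=1/t$ gives $\le C(t^2\sum_{n\le1/t}n^2K(n)+\sum_{n>1/t}K(n))$, and the elementary estimates $\sum_{n\le M}n^{1-\ga}\asymp M^{2-\ga}$ (resp. $\log M$ for $\ga=2$, $O(1)$ for $\ga>2$) and $\sum_{n>M}K(n)\asymp M^{-\ga}$, which can also be read off from Lemma~\ref{seriesAsymptotics}, produce exactly $t^{2\wedge\ga}(1+|\log t|\,\ind_{\{\ga=2\}})$.

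For (2) and (4) the starting point is the identity $\sum_{n\ge1}K(n)a_n=1-e^\gt\sum_{n\ge1}e^{-F(\gt)n}K(n)=1-e^\gt e^{-\gt}=0$. Hence $\im(\phi_0-\phi_\gt)(t)=\sum_n\sin(tn)K(n)a_n$ and, crucially, $\re(\phi_0-\phi_\gt)(t)=\sum_n\cos(tn)K(n)a_n=-\sum_n(1-\cos(tn))K(n)a_n$, so the non-decaying factor $\cos(tn)$ in the real part is replaced by $1-\cos(tn)=O(\min((tn)^2,1))$. Item (2) is then routine: $|\sin(tn)|\le tn$ and the a priori bounds on $a_n$ give $|\im(\phi_0-\phi_\gt)(t)|\le C_N\gt\,t\sum_{n\le N}nK(n)+3t\sum_{n>N}nK(n)$, and choosing $N$ large then $\gt$ small makes the bracket $\le\gep_\gt$ with $\gep_\gt\to0$. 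The case $\ga>2$ of (4) follows the same pattern: $|\re(\phi_0-\phi_\gt)(t)|\le\sum_n\min(\tfrac{(tn)^2}{2},2)K(n)|a_n|\le C_N\gt\,t^2\sum_{n\le N}n^2K(n)+Ct^2\sum_{n>N}n^2K(n)$, and since $\ga>2$ the tail $\sum_{n>N}n^2K(n)\to0$, so again choosing $N$ then $\gt$ gives $\gep_\gt t^2$.

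The main work, and the expected obstacle, is item (4) for $\ga\in(1,2)$ and $\ga=2$, where the bound must be a small power of $\gt$ times the natural $t$-scale and the hypothesis $\gt^{1/2}\le t$ (resp. $\gt^{1/2}\le t|\log t|^{1/2}$) has to be exploited. Here one writes $|\re(\phi_0-\phi_\gt)(t)|\le\sum_n(1-\cos(tn))K(n)|a_n|$ and splits the sum over the three ranges $n\le N_0$ (where $|a_n|\le C\gt$), $N_0<n\le1/F(\gt)$ (where $|a_n|\le C\gt n$), and $n>1/F(\gt)$ (where $|a_n|\le1$), within each further splitting at $n=1/t$ and using $1-\cos(tn)\le\min((tn)^2,1)$ together with the power estimates for $\sum n^pK(n)$ recalled above. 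This produces finitely many terms of the form $\gt^at^b$ (with logarithmic corrections when $\ga=2$, coming from $\sum_{n\le1/t}n^2K(n)\asymp\log(1/t)$); each is dominated by $c\gt^{\ga/6}t^\ga$ (resp. $c\gt^{1/3}t^2|\log t|$) because, after dividing through, the required inequality between powers of $\gt$ and $t$ reduces to $\gt\le t^2$ (resp. $\gt\le t^2|\log t|$) and the smallness of $\gt,t$. The delicate point is to organise this case split so that every error term is dominated, uniformly in $\gt\in[0,\gt_0]$, by the right-hand side stated in the lemma; no individual estimate is hard, but keeping all the exponents consistent is where the care lies.
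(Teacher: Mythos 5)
Your proposal is correct and follows essentially the same route as the paper's proof: elementary $\sin x/x$ and $(1-\cos x)/x^2$ bounds with truncation for items (1)--(3) and, for item (4), the same key cancellation (since $\bE[e^{\gt-F(\gt)\tau_1}]=1$, the oscillating factor $\cos(tn)$ may be replaced by $-(1-\cos(tn))$), followed by range-splitting in $n$ and the hypothesis $\gt\le t^2$ (resp.\ $\gt\le t^2|\log t|$ for $\ga=2$) to compare the resulting powers of $\gt$ and $t$. The only differences are organizational: you split at the natural scales $1/t$ and $1/F(\gt)\asymp 1/\gt$, and the resulting terms (for $\ga\in(1,2)$: $\gt t^2$, $\gt t^{\ga-1}$, $\gt^{\ga}$) do satisfy the stated bound, whereas the paper splits at the optimized intermediate scale $(\gt^{1/6}t)^{-1}$; likewise you use explicit truncation in $n$ where the paper invokes dominated convergence.
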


\begin{proof} Along the proof $c$ will be a constant uniform in $\gt$ and $t$ which may change from line to line.

\smallskip
{\noindent \bf Proof of (1).} 
We have $-\im(1- \phi_\gt(t)) = \bE_\gt (\sin(t\tau_1))$, which is an odd function of $t$, so we restrict the study to $0\le t\le \gep$. In one direction, we have
\beq
\bE_\gt (\sin(t\tau_1)) = \bE(\sin(t\tau_1)e^{\gt - F(\gt)\tau_1}) \le e^{\gt_0}\bE(\tau_1) t,
\eeq
and on the other one,
\beq
\bE_\gt (\sin(t\tau_1)) = \bE_\gt (\sin(t\tau_1)\ind_{\{\tau_1 \le 1/t\}}) + \bE_\gt (\sin(t\tau_1)\ind_{\{\tau_1 > 1/t\}}).
\eeq
For the second term, we have by using that $\frac{\sin x}{x}$ is bounded on $\bbR\sm\{0\}$
\beq
|\bE_\gt (\sin(t\tau_1)\ind_{\{\tau_1 > 1/t\}})| \le ct\bE_\gt(\tau_1 \ind_{\{\tau_1 > 1/t\}}),
\eeq
while for the first term,
\beq
\bE_\gt (\sin(t\tau_1)\ind_{\{\tau_1 \le 1/t\}}) \ge ct \bE_\gt(\tau_1 \ind_{\{\tau_1 \le 1/t\}}) = ct [\bE_\gt(\tau_1) - \bE_\gt(\tau_1 \ind_{\{\tau_1 > 1/t\}})].
\eeq
We get the desired result provided $\gt_0$ and $\gep$ are small enough, since $\bE_\gt(\tau_1) \to \bE(\tau_1)$ and $\bE_\gt(\tau_1 \ind_{\{\tau_1 > 1/t\}}) \le e^{\gt_0} \bE(\tau_1 \ind_{\{\tau_1 > 1/\gep\}}) = o(1)$ as $\gep\to 0$.

\smallskip
{\noindent \bf Proof of (2).} By using that $\frac{\sin x}{x}$ is bounded on $\bbR\sm\{0\}$, we obtain
\beq
| \im(\phi_0(t) -\phi_\gt(t)) | = | \bE(\sin(t\tau_1) (1-e^{\gt - F(\gt)\tau_1})) | \le ct \bE(\tau_1|1-e^{\gt - F(\gt)\tau_1}|),
\eeq
and the expectation following $t$ is $o(1)$ as $\gt\to 0$ by the Dominated Convergence Theorem. 

\smallskip
{\noindent \bf Proof of (3).}
We have
\beq
0 \le \re(1- \phi_\gt(t)) = \bE_\gt[(1-\cos(t\tau_1))] = \bE[(1-\cos(t\tau_1))e^{\gt-F(\gt)\tau_1}] \le e^{\gt_0} \bE[(1-\cos(t\tau_1))],
\eeq
and the result follows by using a standard Tauberian theorem with the fact that $\frac{1-\cos x}{x^2}$ is bounded on $\bbR\sm\{0\}$.

\smallskip
{\noindent \bf Proof of (4). Case $\ga >2$.}
Reusing the same arguments as in the proofs of (2) and (3), we get
\beq
| \re(\phi_0(t) -\phi_\gt(t)) | \le ct^2 \bE(\tau_1^2|1-e^{\gt - F(\gt)\tau_1}|),
\eeq
and we conclude again with the Dominated Convergence Theorem.

\smallskip
{\noindent \bf Proof of (4). Case $\ga \in (1,2)$.} 
We write 
\beq
\ba
\re (\phi_\gt(t) - \phi_0(t)) 
&= \bE[ (1-\cos(t\tau_1)) (1-e^{\gt -F(\gt)\tau_1}) ]\\
& = e^{\gt}\bE[(1-\cos(t\tau_1)) (1-e^{-F(\gt)\tau_1})] -  (e^\gt - 1)\bE[(1-\cos(t\tau_1)) ],
\ea
\eeq
where both terms in the difference are positive. By (3) the second term is less than $c\gt t^\ga$. We now deal with the expectation of $X_{t,\gt} := (1-\cos(t\tau_1)) (1-e^{-F(\gt)\tau_1})$, which we split:
\beq
\bE[X_{t,\gt}] = \bE[X_{t,\gt} \ind_{\{\tau_1 > (b_\gt t)^{-1}\}}]
+ \bE[X_{t,\gt} \ind_{\{\tau_1 \le (b_\gt t)^{-1}\}}],
\eeq
where $b_\gt = o(1)$ shall be specified later. For the first, we use the rough bound 
\beq
\bE[X_{t,\gt} \ind_{\{\tau_1 > (b_\gt t)^{-1}\}}] \le \bP(\tau_1 > (b_\gt t)^{-1}) \le c b_\gt^\ga t^\ga.
\eeq
For the second term, we use that $F(\theta)\sim_0 \gt/\mu$ and that the functions $\frac{1-e^{-x}}{x}$ and $\frac{1-\cos x}{x^2}$ are bounded on $(0,\infty)$ and $\bbR\sm\{0\}$, respectively, in order to get
\beq
\bE[X_{t,\gt} \ind_{\{\tau_1\le (b_\gt t)^{-1}\}}] 
\le c t^2 \gt \sum_{n\le (b_\gt t)^{-1}} n^{2-\ga} 
 \le ct^2\gt (b_\gt t)^{\ga - 3} \le c t^\ga \gt^{1/2}b_\gt^{\ga-3}.
 \eeq
The last inequality is obtained by using our assumption that $\gt^{1/2} \le t$, and the result follows by choosing $b_\gt = \gt^{1/6}$.

\smallskip
{\noindent \bf Proof of (4). Case $\ga =2$.} The proof is the same as in case $\ga \in(1,2)$, except that the splitting is done according to whether $\tau_1$ is smaller or larger than $b_\gt t |\log t|^{1/2}$.
\end{proof}

\begin{lemma} \label{lem:UniformPositive}
Suppose that $\ga > 1$ and $K(n)> 0$ for all $n\ge 1$. Then $\inf_{0 \le \gt \le \gt_0} \inf_{n\ge 1} \bP_\gt(n\in\tau) > 0$ for $\gt_0$ small enough.
\end{lemma}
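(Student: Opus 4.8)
The plan is to use the Fourier (characteristic function) representation of the mass renewal function together with the uniform control on $\phi_\gt$ from Lemma~\ref{lem:UnifEstimatesCFs}. Recall that $\bP_\gt$ is recurrent and aperiodic for each $\gt \le 0$, so by the inversion formula (as in \eqref{Inversion_eq} above, or equation (8) in \cite[Chap.~II.9]{Spitzer})
\[
\bP_\gt(n \in \tau) = \frac{1}{\bE_\gt(\tau_1)} + \frac{1}{2\pi} \int_{-\pi}^{\pi} e^{-int}\, 2\re\!\left[\frac{1}{1 - \phi_\gt(t)}\right] \dd t.
\]
Since $\ga > 1$ we have $\bE_\gt(\tau_1) < \infty$, and in fact $\bE_\gt(\tau_1) \to \mu$ as $\gt \to 0$, so the first term is bounded below by a positive constant uniformly in $\gt \in [0,\gt_0]$ for $\gt_0$ small. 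The task is therefore to show that the integral term is bounded in absolute value, uniformly in $n \ge 1$ and $\gt \in [0,\gt_0]$, by something strictly smaller than that positive constant — or more robustly, that the integral term converges to $0$ as $n \to \infty$ uniformly in $\gt$, and is controlled for small $n$.

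First I would split the integral at a small threshold $\gep > 0$ (the one furnished by Lemma~\ref{lem:UnifEstimatesCFs}). On $\{|t| \le \gep\}$, write $1 - \phi_\gt(t) = \re(1-\phi_\gt(t)) + i\,\im(1-\phi_\gt(t))$ and use
\[
\re\!\left[\frac{1}{1-\phi_\gt(t)}\right] = \frac{\re(1-\phi_\gt(t))}{|1-\phi_\gt(t)|^2} \le \frac{\re(1-\phi_\gt(t))}{(\im(1-\phi_\gt(t)))^2}.
\]
By parts (1) and (3) of Lemma~\ref{lem:UnifEstimatesCFs}, the numerator is $O(t^{2\wedge\ga}(1+|\log t|\ind_{\{\ga=2\}}))$ and the denominator is bounded below by $t^2/c^2$, both uniformly in $\gt \in [0,\gt_0]$; hence the integrand is bounded by $c\,t^{\,(0\wedge(\ga-2))}(1+|\log t|\ind_{\{\ga=2\}})$, which is integrable near $0$ (for $\ga > 1$ the exponent $\ga - 2 > -1$, and the logarithmic factor at $\ga = 2$ is harmless). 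This gives a uniform bound on the near-$0$ contribution without even using the oscillation $e^{-int}$. On $\{\gep \le |t| \le \pi\}$, aperiodicity gives $\inf_{\gep \le |t| \le \pi}|1 - \phi_0(t)| > 0$; combined with the continuity estimates (2) and (4) of Lemma~\ref{lem:UnifEstimatesCFs} (or simply uniform convergence $\phi_\gt \to \phi_0$ on compacts, which follows from dominated convergence since $|1 - e^{\gt - F(\gt)\tau_1}| \le 1$), we get $\inf_{\gt \le \gt_0}\inf_{\gep \le |t| \le \pi}|1 - \phi_\gt(t)| \ge c' > 0$ for $\gt_0$ small, so $2\re[(1-\phi_\gt(t))^{-1}]$ is bounded on this range uniformly in $\gt$, and the integral over it is bounded by $(2\pi/c') \cdot 2\pi$ say — crude but uniform.

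The uniform bound obtained this way controls $|\bP_\gt(n\in\tau) - \bE_\gt(\tau_1)^{-1}|$ by a constant $M$ independent of $n$ and $\gt$, but $M$ is not obviously smaller than $\bE_\gt(\tau_1)^{-1}$, so one more ingredient is needed: the integral term tends to $0$ as $n \to \infty$, uniformly in $\gt$. For this I would invoke a uniform Riemann–Lebesgue argument — the function $t \mapsto 2\re[(1-\phi_\gt(t))^{-1}]$ lies in $L^1([-\pi,\pi])$ with a bound on its $L^1$-norm that is uniform in $\gt \in [0,\gt_0]$ (from the two estimates above), and the family is equicontinuous away from $0$ while uniformly dominated near $0$, so its Fourier coefficients go to $0$ uniformly in $\gt$. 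Consequently there is $N$ with $\bP_\gt(n\in\tau) \ge \tfrac12 \mu^{-1} > 0$ for all $n \ge N$ and $\gt \in [0,\gt_0]$ (shrinking $\gt_0$ so $\bE_\gt(\tau_1)^{-1} \ge \tfrac34\mu^{-1}$). For the finitely many remaining values $1 \le n \le N-1$, note $\bP_\gt(n\in\tau) \ge \bP_\gt(\tau_1 = n) = e^{\gt - F(\gt)n}K(n) \ge e^{-F(\gt_0)N}K(n) > 0$ since $K(n) > 0$ by hypothesis; taking the minimum over $n < N$ gives a positive lower bound uniform in $\gt \in [0,\gt_0]$. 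Combining the two ranges yields $\inf_{0\le\gt\le\gt_0}\inf_{n\ge1}\bP_\gt(n\in\tau) > 0$, as claimed.

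The main obstacle I anticipate is making the ``uniform Riemann–Lebesgue'' step rigorous, i.e.\ showing that the Fourier coefficients of the family $\{2\re[(1-\phi_\gt(\cdot))^{-1}]\}_{\gt\in[0,\gt_0]}$ decay to $0$ uniformly in $\gt$. The cleanest route is to decompose each such function as $g_\gt = g_\gt \mathbf 1_{\{|t|<\delta\}} + g_\gt \mathbf 1_{\{|t|\ge\delta\}}$: the first piece has $L^1$-norm $\le \eta$ uniformly once $\delta$ is small (by the integrable near-$0$ bound), contributing $\le \eta$ to every Fourier coefficient; the second piece is, for each fixed $\delta$, a uniformly bounded and uniformly equicontinuous family (using (2) and (4) of Lemma~\ref{lem:UnifEstimatesCFs} to get continuity of $\gt \mapsto \phi_\gt$ uniformly in $t$ on $\{|t|\ge\delta\}$), so by a standard equicontinuity argument its Fourier coefficients tend to $0$ uniformly in $\gt$. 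Sending $\eta \to 0$ finishes it. Everything else is bookkeeping with the estimates already proved in Lemma~\ref{lem:UnifEstimatesCFs}.
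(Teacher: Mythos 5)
Your argument is correct, but it takes a genuinely different route from the paper. The paper does not prove any decay in $n$ of the oscillatory integral; instead it reduces the lemma to showing $\sup_{n\ge 1}|\bP_\gt(n\in\tau)-\bP(n\in\tau)|\to 0$ as $\gt\to 0$, bounds this supremum by the $L^1$-distance of the two Fourier-side densities via the difference of the inversion formulas, and handles the region near $t=0$ with the comparison estimates (2) and (4) of Lemma~\ref{lem:UnifEstimatesCFs} (the most delicate part of that lemma), concluding with $\inf_{n\ge1}\bP(n\in\tau)>0$, which follows from $K(n)>0$ and the Renewal Theorem. You instead use only items (1) and (3) near zero, to get a $\gt$-uniform integrable domination of $\re[(1-\phi_\gt(t))^{-1}]$ by $c\,t^{(2\wedge\ga)-2}(1+|\log t|\ind_{\{\ga=2\}})$, and then need a uniform-in-$\gt$ Riemann--Lebesgue statement to make $\bP_\gt(n\in\tau)$ close to $\bE_\gt(\tau_1)^{-1}$ for $n\ge N$, treating $n<N$ by hand with $\bP_\gt(n\in\tau)\ge e^{\gt-F(\gt)n}K(n)$. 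What each approach buys: yours bypasses the difference estimates (2) and (4) entirely, while the paper's comparison-at-$\gt=0$ argument gets uniformity in $n$ for free from an $L^1$ bound and so needs no equicontinuity or uniform Riemann--Lebesgue bookkeeping. Two small points on your sketch: the uniform equicontinuity in $t$ of $\{ \re[(1-\phi_\gt(\cdot))^{-1}]\}_{\gt\le\gt_0}$ away from $0$ does not come from items (2)/(4) (those control the dependence on $\gt$, not on $t$) but simply from $|\partial_t\phi_\gt(t)|\le\bE_\gt(\tau_1)\le e^{\gt_0}\mu$ together with the uniform lower bound on $|1-\phi_\gt|$ on $\{|t|\ge\delta\}$ (for $\delta\le|t|\le\gep$ use item (1), for $|t|\ge\gep$ use aperiodicity and uniform convergence $\phi_\gt\to\phi_0$); and the indicator cut-off at $|t|=\delta$ introduces jump discontinuities, which you should absorb either by a smooth cut-off or by noting that a piecewise Lipschitz function has Fourier coefficients $O_\delta(1/n)$ uniformly in $\gt$. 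With these details filled in, your proof is complete.
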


\begin{proof}
From our assumption on $K$, $\bP(n\in\tau) >0$ for all $n\ge 1$. Morover, $\bP(n\in\tau)$ converges to $1/\mu$ by the Renewal Theorem. Therefore, $\inf_{n\ge 1} \bP(n\in\tau) > 0$ and it is enough to prove that
\beq
\sup_{n\ge 1} |\bP_\gt(n\in\tau) - \bP(n\in\tau)| \rightarrow 0 \quad \text{as } \gt \to 0.
\eeq
By the inversion formula, which can be recovered from equation (8) in~\cite[Chap.\ II.9]{Spitzer},
\beq
\bP_\gt(n\in\tau) = \frac{1}{\bE_\gt (\tau_1)} + \frac{1}{2\pi} \int_{-\pi}^{\pi} e^{-int}\ 2 \re\Big( \frac{1}{1- \phi_\gt(t)}\Big) \dd t,
\eeq
and we get by the triangular inequality
\beq
\sup_{n\ge 1} |\bP_\gt(n\in\tau) - \bP(n\in\tau)| \le 
\Big| \frac{1}{\bE_\gt (\tau_1)} - \frac{1}{\bE (\tau_1)}\Big|
+
\frac{1}{\pi} \int_{-\pi}^{\pi} \Big| \re\Big( \frac{1}{1- \phi_\gt(t)}\Big) - \re\Big( \frac{1}{1- \phi_0(t)}\Big) \Big| \dd t.
\eeq
By Dominated Convergence, $\bE_\gt (\tau_1)$ converges to $\bE (\tau_1)$ as $\gt \to 0$ so the first term is a $o(1)$ and we now focus on the integral, which we split it in two parts: the first one is the integral over a neighborhood of $0$, say $(-\gep,\gep)$, and the second one is the integral outside this neighborhood. Since the only zeros of $1- \phi_0$ are at $2\pi\bbZ$ (by aperiodicity of $\tau_1$), the second part converges to $0$ simply because $\phi_\gt$ converges uniformly to $\phi_0$. We now deal with the first part, which is more delicate. First, let us write
\beq \label{eq:diff_CF_mass}
\frac{1}{1- \phi_\gt(t)} - \frac{1}{1- \phi_0(t)} = \frac{(\phi_\gt(t) - \phi_0(t)) (1- \bar \phi_\gt(t)) (1- \bar \phi_0(t))}{|1- \phi_\gt(t)|^2\ |1- \phi_0(t)|^2}.
\eeq
Pick $\gep$ as in Lemma~\ref{lem:UnifEstimatesCFs}. From the items (1) and (3) of the latter, the denominator in the right-hand side is always of the order of $t^4$. For the rest we distinguish according to the value of $\ga$. Suppose first that $\ga > 2$. Collecting all items in Lemma~\ref{lem:UnifEstimatesCFs}, we see that for all $|t|<\gep$, the real part of \eqref{eq:diff_CF_mass} is bounded in absolute value by a constant times $\gep_\gt = o(1)$ as $\gt \to 0$, which is enough. Suppose now that $\ga \in (1,2)$. By the items (1) to (3) in Lemma~\ref{lem:UnifEstimatesCFs}, we get for all $|t|<\gep$
\beq
|\re[\eqref{eq:diff_CF_mass}]| \le c
[\
| \re(\phi_\gt(t) - \phi_0(t))| t^{-2} 
+\ 
\gep_\gt t^{\ga - 2}\ 
].
\eeq
The second term gives what we want since $t^{\ga-2}$ is now integrable close to $0$. To deal with the second part, we further distinguish between $\gt^{1/2} \le t$ and $t < \gt^{1/2}$, and get
\beq
\int_0^{\theta^{1/2}} | \re(\phi_\gt(t) - \phi_0(t))| t^{-2} \dd t \le c \int_0^{\theta^{1/2}} t^{\ga-2} \dd t= o(1),
\eeq
\beq
\int_{\theta^{1/2}}^\gep | \re(\phi_\gt(t) - \phi_0(t))| t^{-2} \dd t \le c \gt^{\ga/6} \int_0^{\gep} t^{\ga-2} \dd t = o(1),
\eeq
by using the items (3) and (4) of Lemma~\ref{lem:UnifEstimatesCFs}, respectively.
The case $\ga = 2$ is essentially the same, except that in the three lines above $t^{\ga-2}$ is replaced by $|\log t|$ and the condition $t\le \gt^{1/2}$ by $t|\log t|^{1/2}\le \gt^{1/2}$.
This completes the proof.
\end{proof}



\end{document}